\newcommand*\bigcdot{\mathpalette\bigcdot@{.5}}
\newcommand*\bigcdot@[2]{\mathbin{\vcenter{\hbox{\scalebox{#2}{$\m@th#1\bullet$}}}}}
\newcommand{\fcal}{{\mathcal F}}
        \definecolor{pink}{rgb}{1,0,1}
\newtheorem{theo}{Theorem}[section]
\newtheorem{prop}[theo]{Proposition}
\newtheorem{coro}[theo]{Corollary}
\newtheorem{lemm}[theo]{Lemma}
\theoremstyle{definition}
\newtheorem{def1}[theo]{Definition}
\theoremstyle{remark}
\newtheorem{rema}[theo]{Remark}
\newcommand{\nwc}{\newcommand}
\nwc{\eps}{\epsilon}
\nwc{\vareps}{\varepsilon}
\nwc{\Oph}{\operatorname{Op}_\hbar}
\nwc{\la}{\langle}
\nwc{\ra}{\rangle}
\nwc{\mf}{\mathbf} 
\nwc{\blds}{\boldsymbol} 
\nwc{\ml}{\mathcal} 
\nwc{\defeq}{\stackrel{\rm{def}}{=}}
\nwc{\cE}{\ml{E}}
\nwc{\cN}{\ml{N}}
\nwc{\cO}{\ml{O}}
\nwc{\cP}{\ml{P}}
\nwc{\cU}{\ml{U}}
\nwc{\cV}{\ml{V}}
\nwc{\cW}{\ml{W}}
\nwc{\tU}{\widetilde{U}}
\nwc{\IN}{\mathbb{N}}
\nwc{\IR}{\mathbb{R}}
\nwc{\IZ}{\mathbb{Z}}
\nwc{\IC}{\mathbb{C}}
\nwc{\IT}{\mathbb{T}}
\nwc{\tP}{\widetilde{P}}
\nwc{\tPi}{\widetilde{\Pi}}
\nwc{\tV}{\widetilde{V}}
\nwc{\supp}{\operatorname{supp}}
\nwc{\rest}{\restriction}
\renewcommand{\Re}{\operatorname{Re}}
\newcommand{\R}{{\mathbb R}}
\newcommand{\Z}{{\mathbb Z}}
\renewcommand{\d}{\partial}
\newcommand{\half}{{\frac{1}{2}}}
\renewcommand{\phi}{\varphi}
\newcommand{\lcal}{\mathcal{L}}
\newcommand{\scal}{\mathcal{S}}
\newcommand{\ep}{\varepsilon}
\title[One can hear the shape of ellipses of small eccentricity]
{One can hear the shape of ellipses of small eccentricity}
\author[Hezari]{Hamid Hezari }
\address{Department of Mathematics, UC Irvine, Irvine, CA 92617, USA} \email{hezari@math.uci.edu}
\author[Zelditch]{Steve Zelditch}
\address{Department of Mathematics, Northwestern University, Evanston, IL 60208, USA} \email{zelditch@math.northwestern.edu}
\keywords{ellipses, isospectral; inverse spectral problem, wave trace, periodic orbits,  MSC primary 58C40, secondary 35P99.}
\date{\today}
\begin{document}

\begin{abstract} We show that if the eccentricity of an ellipse is sufficiently small then up to isometries it is spectrally unique among all smooth domains. We do not assume any symmetry, convexity, or closeness to the ellipse, on the class of domains. 

 In the course of the proof we also show that for nearly circular domains, the lengths of periodic orbits that are shorter than the perimeter of the domain must belong to the singular support of the wave trace. As a result we also obtain a Laplace spectral rigidity result for the class of axially symmetric nearly circular domains using a similar result of De Simoi, Kaloshin, and Wei concerning the length spectrum of such domains.

\end{abstract}

\maketitle

 \section{Introduction}

From the point of view of classical mechanics, elliptical billiards are very special because their billiard maps are completely integrable. In fact the Birkhoff conjecture asserts that ellipses are the only completely integrable strictly convex billiard tables. It is natural to expect this uniqueness property of ellipses to hold from the quantum mechanical point of view and ask for example whether the Laplace eigenvalues of ellipses with respect to Dirichlet or Neumann boundary conditions determine them uniquely. The only planar domains that are known to date to be determined by their spectrum among all smooth domains\footnote{In fact disks are spectrally unique among all Lipschitz domains by the isoperimetric inequality, because area and perimeter are spectral invariants of a Lipschitz domain by the heat trace asymptotic of Brown \cite{Br}.} are disks $D \subset \R^2$.  In this article we show that nearly circular ellipses are spectrally determined among all smooth domains. 
\begin{theo}\label{MAIN} There exists $\ep_0 >0$ such that any ellipse with eccentricity less than $\ep_0$ is uniquely determined by its Dirichlet (or Neumman) Laplace spectrum, among all smooth domains.  \end{theo}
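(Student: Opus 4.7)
The plan is to argue by contradiction, combining iso-spectral compactness with a perturbative inverse spectral analysis centered on the disk. Suppose the theorem fails; then there exist a sequence $\ep_n \to 0$ and smooth domains $\Omega_n$, not isometric to $E_{\ep_n}$, sharing its Dirichlet (resp.\ Neumann) spectrum. Brown's heat expansion furnishes the area, perimeter, and $\int_{\partial\Omega_n}\kappa^2\, ds$ as spectral invariants, which together with Melrose's iso-spectral $C^\infty$ compactness theorem for planar domains lets me extract (after rigid motions) a subsequence with $\Omega_n \to \Omega_\infty$ in $C^\infty$. Since the spectra of $E_{\ep_n}$ converge to that of the unit disk $D$ and $D$ is spectrally unique, $\Omega_\infty = D$. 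Consequently, for large $n$, both $\Omega_n$ and $E_{\ep_n}$ are $C^\infty$-small normal-graph perturbations of the same disk.

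Next, I would exploit the non-degenerate bouncing ball orbits of the ellipse. For each $\ep_n > 0$ the ellipse $E_{\ep_n}$ carries a hyperbolic major-axis orbit (length $2a_n$) and an elliptic minor-axis orbit (length $2b_n$); the wave-trace singularities at $2a_n$, $2b_n$ and their iterates must match those of $\Omega_n$. A localized Balian--Bloch / Guillemin--Melrose wave-trace expansion at each such length extracts Birkhoff normal form invariants of $\Omega_n$ at the corresponding orbits, which, via the standard inverse spectral machinery, translate into Taylor expansions of $\partial\Omega_n$ at the orbital endpoints (in appropriate local coordinates). Further invariants come from the Marvizi--Melrose cluster of lengths accumulating at multiples of the perimeter, which yield integrals of polynomial expressions in $\kappa$ and its derivatives.

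With this spectral data in hand, I would run a rigidity argument in the space of small perturbations of $\partial D$. Writing $\partial\Omega_n$ and $\partial E_{\ep_n}$ as normal graphs over $\partial D$ of small functions $f_n$ and $g_n$, the matched invariants impose a rich system of moment conditions on the difference $f_n - g_n$ (linearized at leading order in $\ep_n$). Showing that these conditions admit only the trivial solution, up to isometry, in a $C^\infty$-neighborhood of the disk, would contradict $\Omega_n \not\cong E_{\ep_n}$ and complete the proof. A natural intermediate target is to use this data to force the $\Z_2 \times \Z_2$ symmetry of $\Omega_n$, at which point one could hope to invoke a previous symmetric-case theorem.

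The main obstacle I anticipate is the degeneracy of the disk limit: in $D$ every diameter is a bouncing ball orbit, so as $\ep_n \to 0$ the two distinguished orbits of $E_{\ep_n}$ coalesce into a continuum of degenerate orbits, both the hyperbolic splitting and the elliptic rotation number collapsing to their degenerate values. The Birkhoff normal form and wave-trace extraction must therefore be carried out with quantitative estimates uniform in $\ep_n$; most likely this requires a semiclassical rescaling in the small-eccentricity parameter so that the relevant spectral invariants remain non-trivially extractable in the limit. This uniform asymptotic analysis near the nearly-degenerate bouncing balls is the technical heart of the argument.
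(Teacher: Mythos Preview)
Your opening step --- showing that a domain isospectral to $E_{\ep_n}$ must be $C^\infty$-close to the disk --- is correct, and the paper carries out essentially this step (directly and quantitatively, via heat invariants, rather than by contradiction and compactness). After that, however, your plan diverges from the paper's and runs into a genuine gap.

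The heart of your proposal is to extract Birkhoff normal form data from the two bouncing-ball orbits of $E_{\ep}$ and match it against $\Omega$. You correctly flag the obstacle: as $\ep \to 0$ these orbits degenerate into the continuum of diameters of the disk, and both the hyperbolic/elliptic splitting and the associated wave-trace coefficients collapse. You propose a ``semiclassical rescaling in $\ep$'' to keep the invariants nontrivial, but no such uniform BNF extraction is known near a completely degenerate $2$-bounce family, and you give no mechanism for one. Even if the BNF data at the two orbits could be recovered, it encodes only the Taylor jet of $\partial\Omega$ at four points; passing from that to global determination of a merely $C^\infty$ curve requires an additional argument you do not supply. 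Your fallback --- forcing $\Z_2\times\Z_2$ symmetry and then invoking a ``previous symmetric-case theorem'' --- does not close the gap either: the available result for $\Z_2\times\Z_2$-symmetric ellipses is an \emph{infinitesimal} rigidity statement (isospectral deformations are flat), not a spectral determination theorem.

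The paper takes a completely different route that sidesteps the degeneracy. Rather than concentrating on $q=2$, it studies the entire family of $(1,q)$ periodic orbits for $q\geq 2$. The key technical point is that for nearly circular domains the Marvizi--Melrose wave-trace parametrix is valid for \emph{all} $q\geq 2$ (not just large $q$), so every $(1,q)$ length lies in the singular support of the wave trace. For the ellipse, the $(1,q)$ length spectrum below the perimeter is a single strictly increasing sequence $T_3(\ep)<T_4(\ep)<\cdots$ with strictly decreasing gaps (a consequence of the convexity of Mather's $\beta$-function). Matching singular supports then forces $t_q(\Omega)=T_q(\Omega)$ for every $q\geq 3$, i.e.\ $\Omega$ is \emph{rationally integrable}. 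At that point the paper invokes the dynamical theorem of Avila--De Simoi--Kaloshin: a rationally integrable domain sufficiently $C^{39}$-close to a disk is an ellipse. Finally, two isospectral ellipses share $t_2$ and $T_2$ (the axis lengths), hence are isometric. The decisive input you are missing is this bridge from spectral data to rational integrability and the Avila--De Simoi--Kaloshin result; your BNF program does not reach it.
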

Henceforth, we use the term `nearly circular ellipse' as short for ``eccentricity less than $\ep_0$.''
This inverse spectral result should be compared with the recent dynamical inverse results of Avila-De Simoi-Kaloshin  \cite{ADK} and Kaloshin-Sorrentino \cite{KS}. They prove a `local' version of the Birkhoff conjecture:  if a strictly convex (finitely smooth) planar domain is sufficiently close to an ellipse and is \textit{rationally integrable}, then it must be an ellipse. Rational integrabillity means that for every integer $q \geq 3$ there is a convex caustic of rotation number $\frac1q$ consisting of periodic orbits with $q$ reflections. In fact,  our proof is based in part  on this result. To be able to use it, we need to prove that the hypothesis is valid. We first need an important definition.
\begin{def1}\label{NC} Let $n\in \mathbb N$ and $\ep>0$. Let $D$ be the unit disk and $N_0$ be its outward unit normal. A simply connected planar domain $\Omega$ with smooth boundary will be called `$\ep$-nearly circular in $C^n$' if its boundary can be written as $\d \Omega = \d D + f(\theta) N_0$, with $\|f \|_{C^n(\d D)}=\cO_n(\ep)$. Here $\cO_n(\ep)$ means that  $\|f \|_{C^n(\d D)}$ is bounded by $A_n \ep$ for some $A_n$ that depends only on $n$. If we only use `nearly circular', it means that $\ep$ is sufficiently small. 
	\end{def1} 

The main advance in this article is contained in the following:

\begin{theo} \label{MAINTHEO2} If $\Omega$ is a bounded smooth plane domain which is isospectral to a nearly circular ellipse of eccentricity $\ep$, then $\Omega$ is $\ep$-nearly circular in $C^n$ for every $n \in \mathbb N$ (in particular it must be strictly convex) and $\Omega$ is rationally integrable. \end{theo}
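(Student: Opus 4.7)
My plan proceeds in two largely independent parts: (A) establishing the $C^n$-closeness of $\Omega$ to the unit disk (which then yields strict convexity), and (B) establishing rational integrability.

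\textbf{Part (A): $C^n$-closeness.} The heat trace of a smooth planar domain produces an infinite sequence of spectral invariants of the form $a_k(\Omega) = \int_{\d\Omega} P_k(\kappa, \kappa', \dots, \kappa^{(k)})\, ds$ plus interior area/Euler contributions (Kac, McKean--Singer, Gilkey). The leading ones recover $\vol(\Omega)$ and $|\d\Omega|$. Since the nearly circular ellipse $E_\ep$ has area $\pi(1+O(\ep^2))$ and perimeter $2\pi(1+O(\ep^2))$, any isospectral $\Omega$ shares the same isoperimetric deficit $|\d\Omega|^2 - 4\pi|\Omega| = O(\ep^2)$. A quantitative Bonnesen--Fuglede-type stability estimate then places $\d \Omega$, after a Euclidean isometry, within Hausdorff distance $O(\ep)$ of the unit circle; writing $\d\Omega = \d D + f(\theta)N_0$, this yields the base case $\|f\|_{C^0} = O(\ep)$. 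To bootstrap to $C^n$, I would use the higher heat coefficients: $a_k$ controls $\int (\kappa^{(k-2)})^2\, ds$ modulo lower-order boundary terms, which is $O(\ep^2)$ for $E_\ep$ and hence for $\Omega$. Since $\kappa$ is a nonlinear expression in $f$ and its first two derivatives, linearising about $f=0$ and iterating in $n$ gives $\|f\|_{C^n} = O_n(\ep)$ via Sobolev embedding. Strict convexity follows because $\kappa>0$ on $\d D$ and $\|f\|_{C^2}$ is as small as we wish once $\ep_0$ is chosen small enough.

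\textbf{Part (B): Rational integrability.} With $\Omega$ now $C^\infty$-close to the ellipse, its billiard map $\Phi_\Omega$ is a small $C^\infty$-perturbation of the completely integrable billiard map $\Phi_{E_\ep}$. The goal is to show $\Phi_\Omega$ carries, for every $q \ge 3$, a convex caustic consisting of $q$-periodic orbits of rotation number $1/q$. I would exploit the Poisson relation for Euclidean billiards (Andersson--Melrose): the singular support of the trace of the wave group lies in $\{0\}\cup\{\pm L : L \in \text{Lsp}(\Omega)\}$, and at each clean flow-invariant submanifold of periodic orbits the wave trace has a leading singularity whose order is governed by the dimension of that submanifold (Duistermaat--Guillemin, Guillemin--Melrose). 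For $E_\ep$ the $1/q$-caustic contributes a singularity at the common length $L_q$ of order strictly greater than the generic isolated-orbit contribution. Isospectrality forces the \emph{same} singularity (in order and principal symbol) in the wave trace of $\Omega$, which the stationary-phase analysis of wave-trace singularities can only produce from a one-parameter clean family of $q$-periodic orbits in $\Omega$. Using the $C^\infty$-closeness from Part (A), these families are near-integrable perturbations of the corresponding ellipse caustics, so they persist as convex invariant curves of $\Phi_\Omega$.

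\textbf{Main obstacle.} Part (A) is largely technical and rests on standard heat-invariant machinery combined with quantitative isoperimetric stability; its main subtlety is arranging the inductive bootstrap so each higher $a_k$ genuinely controls a new derivative of $f$. The genuine difficulty lies in Part (B): promoting ``same length spectrum'' to ``same existence of a convex caustic'' at each $L_q$. A single non-degenerate periodic orbit and a whole Bott-clean family of them both produce singular support at $L_q$, so they can only be distinguished via the precise order and principal-symbol form of the singularity. Ruling out pathological alternatives --- e.g.\ a cluster of isolated $q$-periodic orbits accidentally mimicking the caustic singularity, or a non-convex family --- is where the smallness of $\ep$ and the $C^\infty$-closeness from Part (A) must be used decisively.
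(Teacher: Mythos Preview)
Your Part (A) is essentially the paper's approach: heat invariants $b_n$ control $\int \kappa_n^2\,ds$ modulo lower-order terms, and a Sobolev bootstrap yields $\|\kappa-1\|_{C^n}=O_n(\ep)$. One point you elide is that the lower-order terms in $b_{n+1}$ involve arbitrary powers of $\kappa,\dots,\kappa_{n-1}$, so the bootstrap needs Melrose's a priori isospectral compactness bounds $\|\kappa(\Omega)\|_{C^n}\leq A_n$ to close; without these the induction does not go through. But this is a repairable omission.

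Part (B) has a genuine gap. Your mechanism---matching the \emph{order} of the wave-trace singularity and invoking Duistermaat--Guillemin/Guillemin--Melrose clean intersection theory---presupposes that the fixed-point sets of $\beta^q$ on $\Omega$ are Bott-clean. The paper explicitly points out that this cannot be assumed for a general nearly circular domain: the $q$-periodic set could in principle be a Cantor set, or $L_q$ could have degenerate critical points of any order, and a sufficiently degenerate isolated critical point can mimic (or exceed) the singularity order produced by a clean circle of periodic orbits. So ``stationary phase can only produce this order from a one-parameter clean family'' is not a valid dichotomy. Your closing appeal to near-integrability does not rescue this: rational caustics are resonant tori and are exactly what generically \emph{breaks} under perturbation, so $C^\infty$-closeness to $E_\ep$ gives no a priori persistence.

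The paper sidesteps cleanliness entirely. It first extends the Marvizi--Melrose parametrix for $\hat\sigma_{1,q}$ down to all $q\geq 2$ (valid only for nearly circular domains; this is where Part (A) is really used), and then uses Soga's theorem on oscillatory integrals with possibly degenerate phase to show that \emph{every} length in $\mathcal L_{1,q}(\Omega)$---in particular both $t_q$ and $T_q$---lies in the singular support of $w_\Omega$, with no cancellation. Since isospectrality gives $\bigcup_{q\geq 3}\mathcal L_{1,q}(\Omega)=\{T_3(\ep)<T_4(\ep)<\cdots\}$ with strictly decreasing gaps (a consequence of convexity of Mather's $\beta$-function), any $q$ with $t_q(\Omega)<T_q(\Omega)$ would insert an extra point and force the gap sequence to fluctuate. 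Hence $L_q\equiv T_q$ for all $q\geq 3$, which is precisely rational integrability. A second argument in the paper is closer in spirit to yours: it shows $\hat\sigma_{1,q}^\Omega=\hat\sigma_{1,q}^{E_\ep}$ as distributions, whence $\int e^{i\lambda L_q(s)}a_0(q,s)\,ds$ is a classical symbol of order $0$; Soga's theorem (or a direct H\"older-regularity argument on the pushforward measure) then forces $L_q$ to have a single critical value, hence to be constant. Either way, the crux is an oscillatory-integral lower bound valid \emph{without} non-degeneracy hypotheses, not a comparison of clean-intersection singularity orders.
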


The near circularity of $\Omega$ is proved in Proposition \ref{close}. The proof uses heat trace invariants
 to show that if a smooth domain $\Omega$ is isospectral to an ellipse $E$ with small eccentricity $\ep$, then $\Omega$ must be sufficiently close to $E$ in the $C^n$ norm for all $n$. In particular, $\Omega$ must itself be  almost circular.
 
 After this initial step,  the proof of rational integrability is based on a study of the  wave trace
 \begin{equation} \label{WTDEF} w_{\Omega}(t): =\rm{Tr} \cos t \sqrt{\Delta_\Omega}. \end{equation}
 It is well-known that $w_{\Omega}(t)$ is a tempered distribution on $\R$ and that the positive singularities of  $w_{\Omega}$ can only occur for $t \in \mathcal{L}(\Omega)$, the length spectrum (i.e. the closure of the set of lengths of closed billiard trajectories). Of particular importance here are the closed trajectories of type $\Gamma (1, q)$, i.e. with winding number $1$ and with $q$ bounces (reflections) off the
 boundary $\partial \Omega$. We denote the set of lengths of such closed
 trajectories by $\lcal_{1, q}(\Omega)$.  For each $q$, the contribution to $w_{\Omega}(t)$ of closed trajectories $\Gamma(1, q)$ is denoted by $\hat{\sigma}_{1,q}$.  In \cite[Proposition 6.11]{MM},  Marvizi-Melrose constructed microlocal  parametrices, also denoted by  $\hat{\sigma}_{1,q}$,
 for the microlocal contribution of trajectories in $\Gamma(1, q)$ and proved
 that the parametrix was valid for  $q \geq q_0(\Omega)$. By `valid' is meant that the wave trace is a sum of contributions from Lagrangian submanifolds $\Lambda_{q}$ corresponding to $q$-bounce orbits
 and $\hat{\sigma}_{1,q}$ is the contribution from those with winding number $p=1$ (see \cite[Section 6]{MM}).

To apply the results of \cite{ADK,KS} it is essential to have analogous results for $q \geq 3$ bounces.  One of the key results of this article is Theorem \ref{parametrix theorem}, which  shows that the Marvizi-Melrose parametrices are in fact valid for closed billiard trajectories in $\Gamma(1, q)$ with 
$q \geq 2$ for nearly circular domains in $C^8$. 

Theorem \ref{parametrix theorem} is applied in two  independent ways  to prove Theorem \ref{MAINTHEO2}. The first way is to combine it with a theorem of Soga \cite{So} for oscillatory integrals with degenerate phase functions to prove,
\begin{theo}\label{length spec = singsupp} Let $\Omega$ be a nearly circular domain in $C^8$.  Then, for all $q \geq 2$, one has $$\mathcal{L}_{1, q}(\Omega) \subset \text{SingSupp}\, w_\Omega(t).$$ In other words, for such domains, the wave trace is singular at the length of every $(1, q)$ periodic orbit. 
\end{theo}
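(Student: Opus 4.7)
The plan is to combine the parametrix $\hat\sigma_{1,q}$ from Theorem \ref{parametrix theorem} with Soga's theorem \cite{So} on oscillatory integrals with degenerate phase to detect, for each $L\in\mathcal{L}_{1,q}(\Omega)$, a nonvanishing singularity of $w_\Omega$ at $t=L$. First, we localize the wave trace near $L$ with a smooth cutoff $\chi(t-L)$ of sufficiently small support. Because $\Omega$ is nearly circular in $C^8$, its length spectrum is a small $C^0$ perturbation of the length spectrum of a disk, so each $\mathcal{L}_{p,q'}$ with $(p,q')\ne(1,q)$ is separated from $L$ by a positive distance that is uniform in the nearly circular class. Consequently the only Lagrangian that contributes to singularities inside the support of $\chi(\cdot-L)$ is $\Lambda_q$, whence
\[
\chi(t-L)\, w_\Omega(t)\ =\ \chi(t-L)\,\hat\sigma_{1,q}(t)\ \mod C^\infty(\R),
\]
and the task reduces to showing that $\chi(t-L)\hat\sigma_{1,q}(t)$ is not smooth.

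Next we would realize $\hat\sigma_{1,q}$ as its explicit Marvizi--Melrose oscillatory integral, whose phase is $\tau(\phi(s_1,\ldots,s_q)-t)$, where
\[
\phi(s_1,\ldots,s_q)\ =\ \sum_{j=1}^{q}|P(s_{j+1})-P(s_j)|
\]
is the total chord-length generating function on $(\partial\Omega)^q$, $P$ being an arc-length parametrization of $\partial\Omega$. The stationary set $\{\nabla\phi=0,\ \phi=L\}$ is exactly the collection of $(1,q)$-periodic billiard configurations of length $L$. For a nearly integrable and nearly circular $\Omega$ this critical set can be a positive-dimensional submanifold, so nondegenerate stationary phase fails. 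This is precisely the situation handled by Soga's theorem, which provides an asymptotic expansion
\[
\widehat{\chi\,\hat\sigma_{1,q}}(\tau)\ \sim\ e^{-iL\tau}\,\tau^{\kappa}\bigl(c_0+O(\tau^{-\de})\bigr)\quad\text{as }\tau\to+\infty,
\]
where $\kappa$ depends on the dimension of the critical submanifold and $c_0$ is the integral of a canonical density, built from the principal symbol of the parametrix, over that submanifold.

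The main obstacle, and the heart of the argument, is to verify that $c_0\ne 0$. This amounts to two sign checks. First, the principal symbol of $\hat\sigma_{1,q}$ along $\Lambda_q$ is everywhere positive: in the Marvizi--Melrose construction it is a Jacobian-type factor assembled from the $q$-th iterate of the billiard map together with the Liouville half-density, so the integrand contributing to $c_0$ has a fixed positive sign pointwise on each connected component of the critical manifold. Second, if the critical set at level $L$ has several components, they must contribute with the same Maslov index; this follows from the uniform structure of the Lagrangian $\Lambda_q$ described in \cite[Section 6]{MM}, since every $(1,q)$-orbit has the same Morse index in the chord-length functional. With $c_0>0$ thus established, $\widehat{\chi\,\hat\sigma_{1,q}}(\tau)$ cannot decay rapidly, so $\chi(\cdot-L)\,\hat\sigma_{1,q}\notin C^\infty(\R)$, and therefore $L\in\text{SingSupp}\,w_\Omega(t)$, as required.
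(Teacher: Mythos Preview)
Your outline has two genuine gaps.

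First, you have mischaracterized both the Marvizi--Melrose parametrix and Soga's theorem. The parametrix of Theorem~\ref{parametrix theorem} is \emph{not} an oscillatory integral over $(\partial\Omega)^q$ with the full chord-length phase $\phi(s_1,\dots,s_q)$; the whole point of the construction (via the $q$-length function $\Psi_q(s,s')$ of Theorem~\ref{q path}) is that it is already reduced to a \emph{one}-variable integral over $s\in\partial\Omega$ with phase $\xi(t-L_q(s))$, where $L_q$ is the $q$-loop function. Working on $(\partial\Omega)^q$ forces you to confront a critical set whose structure is unknown: for a general nearly circular domain the fixed-point set of $\beta^q$ need not be clean---it could even be a Cantor set---so there is no reason the asymptotic expansion you write down exists at all. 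Relatedly, Soga's theorem does \emph{not} produce an expansion $e^{-iL\tau}\tau^\kappa(c_0+O(\tau^{-\delta}))$ with a computable leading coefficient. It is a non-decay statement: if the amplitude is nonnegative and strictly positive at some degenerate critical point of the phase, then the oscillatory integral is not $O(\lambda^{-\infty})$. There is no $c_0$ to verify, and your entire ``main obstacle'' paragraph (positivity of the symbol, equality of Maslov indices across components) is aimed at a quantity that Soga's theorem does not define.

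Second, your argument conflates two cases that the actual proof must treat separately. After reducing via stationary phase in $(t,\xi)$ to the one-dimensional integral $\int_{\partial\Omega}e^{i\lambda L_q(s)}a_0(q,s)\rho_q(L_q(s))\,ds$, one argues by contradiction: if $t_0\in\mathcal{L}_{1,q}$ were not in the singular support, this integral would be rapidly decaying. If some critical point of $L_q$ with value $t_0$ is \emph{degenerate}, Soga's theorem (with positive amplitude $a_0\rho_q$) yields the contradiction directly. If instead all such critical points are \emph{nondegenerate} (hence isolated and finitely many after shrinking the cutoff), one applies ordinary stationary phase; the leading term is a finite sum $\sum_j e^{i\frac{\pi}{4}\operatorname{sign}L_q''(s_j)}a_0(q,s_j)/\sqrt{|L_q''(s_j)|}$, which cannot vanish because each summand has argument $\pm\pi/4$ and strictly positive modulus. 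Your sketch does not contain this dichotomy, and your Maslov-index claim is neither needed nor justified.
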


Let us present the application of Theorem \ref{length spec = singsupp} to Theorem \ref{MAINTHEO2}. We let $\ell = |\partial \Omega|$ denote
the circumference. It is well-known to be a spectral invariant. 
 It is proved that, for  a nearly circular ellipse,   the singular support of $w_{\Omega}(t)$  contained in $(0, \ell)$  is a discrete set whose {\it gap sequence}  is monotonically decreasing. We refer to Lemma \ref{gaps decreasing}  for the definition and statement. On the other hand, if $\Omega$ is a nearly circular domain that is not rationally integrable then the gap sequence of the singular support must fluctuate. By Theorem \ref{length spec = singsupp} the
 lengths in $(0, \ell)$ with $q \geq 2$ are spectral invariants. If $\Omega$
 is isospectral to an ellipse of small eccentricity, then by Theorem \ref{length spec = singsupp}  its gap sequence is
 monotonically decreasing and therefore it is rationally integrable. 
 
 We then apply the  results of \cite{ADK} to show that $\Omega$ must be an ellipse. This step needs $\Omega$ to be a nearly circular in $C^n$ with $n=39$, which is provided to us, in fact for any $n$, by Theorem \ref{MAINTHEO2}. To conclude the proof, we use the easy result that if two ellipses are isospectral, then they must be isometric.

 \subsection{Second approach}
 The second application of Theorem \ref{parametrix theorem} uses the following: 
 
 \begin{prop} \label{MMSPINV} If $\Omega$ is a nearly circular domain in $C^8$,
 then for $q \geq 2$, 
 $\hat{\sigma}^{\Omega}_{1,q}(t)$ is a spectral invariant. Hence, if $\Omega$ is isospectral to an ellipse $E_\ep$ of small eccentricity $\ep$,
   then for all $q \geq 2$, we have $\hat{\sigma}^{\Omega}_{1,q}(t) = \hat{\sigma}^{E_\ep}_{1,q}(t)$. \end{prop}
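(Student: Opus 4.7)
The plan is to exhibit each parametrix $\hat\sigma^\Omega_{1,q}$ as a time-localization of the wave trace $w_\Omega$, modulo $C^\infty$. Since $w_\Omega(t) = \sum_j \cos(t\sqrt{\lambda_j})$ is completely determined by the spectrum, the product $\chi \cdot w_\Omega$ is a spectral invariant for every fixed $\chi \in C^\infty_c(\mathbb{R})$; the perimeter $\ell = |\partial\Omega|$ is also a spectral invariant via the heat trace.

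First I would identify which periodic orbits contribute to the singular support of $w_\Omega$ on the interval $(0, \ell)$. For a $C^8$-nearly circular $\Omega$, every $(p,q)$ orbit with winding number $p \geq 2$ has length strictly greater than $\ell$: for the round disk of radius $R_0$ this length equals $2qR_0 \sin(p\pi/q) > 2\pi R_0 = \ell$ whenever $p \geq 2$, and strict inequality persists under a sufficiently small $C^8$ perturbation of the boundary. Combined with Theorem~\ref{parametrix theorem}, which provides a valid Marvizi--Melrose parametrix $\hat\sigma^\Omega_{1,q}$ for every $q \geq 2$, this yields
\[
w_\Omega(t) \equiv \sum_{q \geq 2} \hat\sigma^\Omega_{1,q}(t) \pmod{C^\infty((0, \ell))}.
\]

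Next I would isolate a chosen $q$ by a time cutoff. For the round disk, $\mathcal{L}_{1,q}$ reduces to the single length $L^\circ_q = 2qR_0 \sin(\pi/q)$, and the map $q \mapsto L^\circ_q$ is strictly increasing. For a $C^8$-nearly circular $\Omega$ the cluster $\mathcal{L}_{1,q}(\Omega)$ lies in a small interval $I_q$ around $L^\circ_q$, and for any fixed $q \geq 2$ the intervals $I_q$ and $I_{q'}$ are disjoint for all $q' \neq q$ once $\ep$ is taken small enough (depending on $q$). Choose $\chi_q \in C^\infty_c(\mathbb{R})$ that equals $1$ on a neighborhood of $\mathcal{L}_{1,q}(\Omega)$ and vanishes on a neighborhood of $\mathcal{L}_{1,q'}(\Omega)$ for every $q' \neq q$. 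Because each $\hat\sigma^\Omega_{1,q'}$ is smooth off $\mathcal{L}_{1,q'}(\Omega)$, the cutoff kills the $q' \neq q$ terms modulo $C^\infty$, giving
\[
\chi_q(t) \cdot w_\Omega(t) \equiv \hat\sigma^\Omega_{1,q}(t) \pmod{C^\infty(\mathbb{R})}.
\]
Since the left-hand side is a spectral invariant, so is $\hat\sigma^\Omega_{1,q}$ modulo $C^\infty$. For the second assertion, Theorem~\ref{MAINTHEO2} guarantees that any $\Omega$ isospectral to $E_\ep$ is itself $C^8$-nearly circular and shares the perimeter $\ell$ with $E_\ep$; applying the identity above with the same $\chi_q$ to both domains and using $w_\Omega \equiv w_{E_\ep}$ yields $\hat\sigma^\Omega_{1,q} \equiv \hat\sigma^{E_\ep}_{1,q}$ modulo $C^\infty$.

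I expect the main obstacle to be verifying the disjointness $I_q \cap I_{q'} = \emptyset$ needed to construct $\chi_q$. Since the nominal gaps $L^\circ_{q+1} - L^\circ_q$ are only $O(q^{-3})$ as $q \to \infty$, while the width of $I_q$ is controlled only by the $C^n$ smallness coming from Theorem~\ref{MAINTHEO2}, this disjointness cannot be made uniform in $q$. This is not required here, however, because the proposition is a separate assertion for each fixed $q \geq 2$ and $\ep$ may be chosen small depending on $q$. The substantive microlocal input---the extension of the Marvizi--Melrose parametrices down to $q = 2$---is packaged in Theorem~\ref{parametrix theorem} and used as a black box throughout.
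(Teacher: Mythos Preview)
Your overall strategy—identifying $\hat\sigma_{1,q}$ with a time–localized piece of the wave trace $w_\Omega$—is exactly the paper's, and the reduction to showing that the clusters $\mathcal L_{1,q}(\Omega)$ are pairwise disjoint (together with Lemma~\ref{winding twice or more} to dispose of $p\ge 2$) is correct.

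The gap is in your last paragraph. The proposition, and its use in the second proof of Theorem~\ref{MAINTHEO2}, require the conclusion for \emph{all} $q\ge 2$ for a single fixed $\Omega$; the smallness threshold in ``nearly circular'' is one number $\ep_0$, not a family $\ep_0(q)$. You cannot let $\ep$ depend on $q$, since the domain is given. If only the $q$–dependent version held, then for a fixed $\Omega$ isospectral to $E_\ep$ the identity $\hat\sigma_{1,q}^\Omega=\hat\sigma_{1,q}^{E_\ep}$ would be available only for the finitely many $q$ with $\ep_0(q)>\ep$, which is useless for concluding rational integrability.

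Your assertion that uniform disjointness ``cannot be made'' is also incorrect: the paper does make it uniform. The point is that the naive width bound $T_q-t_q=O(\ep)$ is far from sharp. Lemma~\ref{MM} gives the quantitative Marvizi--Melrose estimate
\[
T_q-t_q \;=\; q^{-3}\,O(\|f\|_{C^6}) + O(q^{-4}),
\qquad
T_q \;=\; \ell - c\,q^{-2} + q^{-3}\,O(\|f\|_{C^6}) + O(q^{-4}),
\]
so the width of $\mathcal L_{1,q}$ shrinks at the \emph{same} rate $q^{-3}$ as the gaps, with a constant that can be made small by taking $\|f\|_{C^6}$ small. Together with Lemma~\ref{T-t} for the finitely many small $q$, this yields Lemma~\ref{structure of lengths} and hence Corollary~\ref{q can be heard}: the $\mathcal L_{1,q}$ are pairwise disjoint for all $q\ge 2$ once $\|f\|_{C^6}$ is below a single universal threshold. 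That corollary is precisely what makes $\hat\sigma_{1,q}=\hat\chi_q\,w_\Omega$ a spectral invariant for every $q$.

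A smaller point: even your $q$–dependent argument implicitly uses that the width of $I_{q'}$ is $O(\ep)$ \emph{uniformly in $q'$} (otherwise the large-$q'$ clusters could spread down below $L^\circ_q$). You assert this without proof; in the paper it is Lemma~\ref{T-t}, obtained from the Melnikov bound Lemma~\ref{bound on Melnikov}, and is not a triviality.
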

  
  The statement is not obvious, because neither the winding number nor
  the bounce number are known to be spectral invariants. Moreover, if the length spectrum  $\mathcal{L}(\Omega)$ of $\Omega$ is multiple, i.e. if there
  exists more than one connected component in the set of closed billiard trajectories of some length $L$, then the contributions from the two components may cancel.  Theorem \ref{length spec = singsupp} shows that complete cancellation cannot occur, but Proposition \ref{MMSPINV} asserts more.
 
 Granted Proposition \ref{MMSPINV} the proof of Theorem \ref{MAINTHEO2} is
 rather simple: it is shown that the phase function of $\hat{\sigma}_{1,q}(t)$ 
 has exactly one critical value. But that forces it to be constant, and from
 that one sees that $\Omega$ must be rationally integrable.
 
\subsection{Application to spectral rigidity of $\Z_2$-symmetric nearly circular domains}
 In \cite{DKW} it is proved that the class $\mathcal S_\delta^8$ of axially symmetric planar domains that are $\delta$-nearly circular in $C^8$, $\delta$ sufficiently small, are length spectrally rigid within this class. Length spectral rigidity means that if $\Omega_t \in \mathcal S^8_\delta$, $0 \leq t \leq 1$, is a $C^1$ family along which the length spectrum is preserved, then $\Omega_t$ must be trivial, that is it consists of isometries of $\Omega_0$.  In fact the result of \cite{DKW} only uses the lengths of $(1, q)$-type periodic orbits.  Now equipped with our Theorem \ref{length spec = singsupp} , Lemma \ref{winding twice or more},  and Cor \ref{q can be heard}, we obtain the following result on the Laplace spectral rigidity of the class $\mathcal S_\delta^8$:
 
 \begin{theo} There exists $\delta >0$ universal such that the following statement holds. 
 	Let $\{\Omega_t\}_{t \in [0, 1]}$ be a $\Delta$ isospectral deformation of domains in the class $\mathcal  S_\delta^8$ of $\mathbb Z_2$-axially symmetric smooth domains that are $\delta$-close in $C^8$ to the unit disk. Then $\Omega_t$ must be a trivial deformation.  
 \end{theo} 

 \subsection{Prospects for more general ellipses} 
 It is natural to try to extend the results to more general ellipses or even more general convex domains. However,
 there exist many obstructions to any generalization of the proofs in this article for higher eccentricities. 
 
 An important step is the proof of Theorem \ref{parametrix theorem}, which shows that the parametrix for
 the $q$-bounce wave trace \eqref{sigma} of \cite[Proposition 6.11]{MM} is valid for $q \geq 3$ for a nearly circular
 domain. The proof of  Theorem \ref{parametrix theorem} is quite general but uses the `projectibility' of a certain
 portion of the $q$-bounce Lagrangian, namely the portion close to the diagonal consisting of orbits of winding number
approximately one. It is possible that this portion of the Lagrangian is projectible for more general ellipses.

 To explain the problem, we recall that the broken geodesic (billiard) flow
 induces a {\it billiard map} $\tilde \beta: B^*\partial \Omega \to B^*\partial \Omega$, where $B^* \partial \Omega$ is the unit `ball-bundle', which of
 course is an annulus in dimension $2$. See Section \ref{BILLIARDSECT} for background.  In the case of a convex domain, $\tilde\beta$ is a {\it twist map} of the annulus. This means that a `vertical' 
 $B^*_x \partial \Omega$ is mapped by $\tilde\beta$ to a horizontal curve
 $\tilde\beta(B^*_x \partial \Omega)$. Such a curve is of course a Lagrangian
 submanifold and may be parametrized by the differential of a function
 on the base $\partial \Omega$. However, $q$ bounce periodic  orbits are period
 $q$ orbits of $\tilde\beta$ and $\tilde\beta^q$ fails to be a twist map. In fact the
 image  $\tilde\beta^q(B^*_x \partial \Omega)$ folds over the base $q$ times.
 The essence of Theorem \ref{parametrix theorem} is to show that the piece of $\tilde\beta^q(B^*_x \partial \Omega)$ corresponding to the image of  small
 angles $\phi \in B^*_x \partial \Omega$, i.e. to billiard geodesic loops of winding
 number $1$, projects to $\partial \Omega$ without singularities near $x$.  
 Hence this piece may be parametrized by the differential of a function
 on $\partial \Omega$, namely, the $q$-bounce loop-length function for billiard loops
 at $x$ making $q$ bounces. 
 
 Note that the result of \cite{KS} extends \cite{ADK} to arbitrary ellipses. However the other steps of our arguments need closeness to a disk, so the proofs in this article do not extend without serious modifications to ellipses of arbitrary eccentricity. We also mention the works of Huang-Kaloshin-Sorrentino \cite{HKS} and a recent work of Koval \cite{Koval} that concern rational integrablity near the boundary of a nearly circular domain. In these results, it is only assumed that there exist caustics of rotations numbers $\frac{p}{q}$ where $\frac{p}{q} \leq \frac{1}{q_0}$ for a given (possibly large) $q_0$. There is hope that these methods can be extended to domains that are near ellipses of arbitrary eccentricity.

\subsection{Comparison to  works of Marvizi-Melrose and Amiran}
In \cite{MM},  Marvizi-Melrose used the parametrices to prove  that there exists a two-parameter
family of strictly convex domains which are spectrally determined
among domains satisfying a certain non-coincidence condition. The
domains are specified as solutions of extremal problems involving
 the so-called  Marvizi-Melrose invariants (see \cite{Sib99,Sib04} for the relation of these invariants to the marked length spectrum). They even show that the
curvature functions of the extremal domains are given by elliptic
integrals. But they do not conclude that the domains are ellipses.

In \cite{A,A2}, Amiran does state the  conclusions for the ellipse, but there appear to be serious gaps in the proof. The  present article over-laps \cite{A, A2} only in the proof of 
the {\it non-coincidence} condition. In \cite{A2} (see Corollary 7), the author shows that
the strong non-coincidence condition holds for an ellipse  whose
minor axis length exceeds $\frac{1}{4} \text{length}(\partial E)$. The proof does not appear to be complete and we give our own proof in the case of a nearly circular domain. 

 We briefly describe the approach of \cite{A,A2}.  In
\cite{A}, Amiran defines `caustics invariants' $L, J_1, G$ and states
(Theorem 9) that the extremals of $G$ among domains with fixed $L,
J_1$ are ellipses. The  non-coincidence condition (Theorem 10
of \cite{A}) is used to
 show that sufficiently many caustics invariants are $\Delta$-spectral
invariants.   The idea of the  proof is to show that only
curvature functions of ellipses solve the Euler-Lagrange equations
for $G$, a nonlinear second order equation for the radius of
curvature of the domain. We do not understand 
the proof given in \cite{A,A2}   that curvature
functions of ellipses solve the equation, or that they are the
only solutions. If indeed such ellipses are the only solutions
of the extremal problem,  then they would be
spectrally determined among domains whose curvature functions are
near that of the ellipse (Corollary 7).

\subsection{Previous positive results}  To our knowledge, the results of this paper give the first `universal inverse spectral result' for 
any class of  domains other than the circle. The result says that ellipses in a  specific family (`almost circular')  are determined by their spectra among all smooth domains without any further assumptions.
In fact, there do not even exist  prior `local spectral determination' results, which would say that an ellipse (or any other domain) is determined by its spectrum  among domains which lie in a sufficiently small $C^k$ neighborhood
of the ellipse. The only prior positive result specific to the inverse  Laplace spectral problem for ellipses is \cite{HeZeE} (see also \cite{PT3} for ellipsoids), which says that ellipses
are infinitesimally spectrally rigid among $C^{\infty}$ domains with the same left-right and up-down symmetries.  The progress in that article is to allow competing domains to be $C^{\infty}$ and not real-analytic.  To be precise, the rigidity result proved that any Dirichlet/Neumann isospectral deformation 
had to be `flat', i.e. all of its variational derivatives vanish. These results were generalized to all Robin boundary conditions in \cite{Vig1}.

The most general prior positive inverse results
were that of \cite{Ze}, where it is proved that a generic real analytic plane domain with one up-down symmetry is determined by its Dirichlet (or Neumann)
spectrum among other such domains, and that of \cite{DKW}, where a generic nearly circular domain with one reflection symmetry is shown to be Laplace spectrally rigid in the same class of domains.  In \cite{DKW}, the genericity was needed in drawing a conclusion on the Laplace spectral rigidity from the length spectral rigidity. The results of the present article, by comparison, do not make any symmetry assumptions and allow the competing domains to be general $C^{\infty}$ domains. There also exists a sequence of results of Popov-Topalov \cite{PT1, PT2,PT3} using the KAM structure of convex smooth plane domains to deduce  spectral rigidity results for Liouville billiards (including ellipses) with two commuting reflection symmetries, and for analytic domains that are sufficiently close to an ellipse and possess the two reflection symmetries of the ellipse. 

Prior inverse results for other classes of plane domains are surveyed in \cite{Z, DaHe, Z14}, with an emphasis on positive results. Negative
results, such as the construction of  isospectral polygonal domains of \cite{GWW}, are surveyed in \cite{Go}.

\subsection{Organization of the paper}
In Section 2, we show that any smooth domain isospectral to a nearly circular domain must be nearly circular as well. The main tool in proving this result is heat trace invariants.  Section 3 is dedicated to the existence of geodesics loops for nearly circular domains. The lengths of these loops (we call them loop functions) play a key role in our later analysis of the wave trace via the Marvizi-Melrose parametrix. We also find a useful variational formula for the loop functions. Section 4 entirely involves the length spectrum of nearly circular domains. The important gap structure of the length spectrum is proved in this section. The Marvizi-Melrose parametrix is proved in Section 5 for nearly circular domains for all $q \geq 2$. We provide an independent proof of this theorem using Green's second identity. In Section 6, we show that the part of the length spectrum that is less than the perimeter is contained in the singular support of the wave trace, hence in fact we obtain an equality in Poisson relation in the interval $(0, \ell)$. In Section 7, we prove our main result by showing that if a domain is isospectral to a nearly circular ellipse then it must be rationally integrable hence must be an ellipse by a result of Avila-De Simoi-Kaloshin.  At the end of  Section 7 we also provide an alternative proof of the rational integrability.

\section{Isospectrallity with a nearly circular ellipse implies closeness to the ellipse}
Let $E_\ep$ is an ellipse of eccentricity $\ep$. 
After a rescaling and a rigid motion, we can assume that $E_\ep$ is given by
$$ E_\ep= \left \{ (x, y) \in \R^2; \; x^2 + \frac{y^2}{1-\ep^2} \leq 1 \right \}.$$
Then assume $\Omega$ is a smooth domain with 
\begin{equation}\label{isospectral}
 \text{Spec}(\Omega) = \text{Spec} (E_\ep).
\end{equation}
Here, $ \text{Spec}$ means the spectrum of the euclidean Laplacian with respect to Dirichlet (or Neumman) boundary condition. We know from the heat trace invariants that $\Omega$ must be simply-connected with the same perimeter as $E_\ep$, which we shall call $\ell_\ep$ and use $s$ for the arclength parameter. We will also use
$$ \kappa(\Omega)(s) \quad \text{and} \quad \kappa(E_\ep)(s),$$
for the curvature functions of $\d \Omega$ and $\d E_\ep$ respectively.  Note that  $\kappa(\Omega)$ and $ \kappa(E_\ep)$ belong to the same space $C^\infty [0, \ell_\ep]$.
We now have the following lemma.
\begin{prop}\label{close} Suppose $\Omega$ and $E_\ep$ are isospectral. Then for all integers $n \geq 0$, we have
	$$ \| \kappa(\Omega)- 1 \| _{C^n[0, \ell_\ep]} = \cO_n (\ep).$$ In particular, for sufficiently small $\ep$, $\Omega$ is strictly convex. Here, $\cO_n(\ep)$ means that the involved constant depends only on $n$. 
\end{prop}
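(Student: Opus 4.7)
The plan is to exploit the sequence of heat trace invariants. For a smooth bounded planar domain the heat trace admits the asymptotic expansion
\begin{equation*}
\operatorname{Tr}(e^{-t\Delta_\Omega}) \sim \frac{|\Omega|}{4\pi t} - \frac{\ell}{8\sqrt{\pi t}} + \sum_{k\geq 0} a_k(\Omega)\, t^{k/2},
\end{equation*}
where, by the work of McKean--Singer, Gilkey, and Melrose, each $a_k(\Omega) = \int_{\partial\Omega} p_k(\kappa, \kappa',\ldots)\,ds$ for a universal polynomial in the curvature and its arc-length derivatives. The structural input I will use is that these invariants are triangular in the derivative order of $\kappa$: for each $k$, there is a heat invariant whose highest-order contribution is a nonzero multiple of $\int_{\partial\Omega}(\kappa^{(k)})^2\,ds$, with the remainder depending only on $\kappa,\ldots,\kappa^{(k-1)}$. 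Combined with the perimeter invariant and Gauss--Bonnet ($\int\kappa\,ds = 2\pi$), the sequence $\{a_k\}$ therefore recovers every $\int (\kappa^{(k)})^2\,ds$ modulo integrals of polynomials in lower derivatives. A direct parametric computation on $E_\ep$ using $\theta\mapsto(\cos\theta,\sqrt{1-\ep^2}\sin\theta)$ gives $\kappa(E_\ep) = 1 + \tfrac{\ep^2}{2}(3\cos^2\theta - 1) + \cO(\ep^4)$, hence $\|\kappa(E_\ep) - 1\|_{C^n} = \cO_n(\ep^2)$ and $\ell_\ep = 2\pi + \cO(\ep^2)$.

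The core argument is a strong induction on $n$ showing $\|\psi\|_{C^n[0,\ell_\ep]} = \cO_n(\ep)$, where $\psi := \kappa(\Omega) - 1$. For the base case, isospectrality gives $\int \kappa(\Omega)^2\,ds = \int \kappa(E_\ep)^2\,ds = \ell_\ep + \cO(\ep^2)$; combining with $\int\kappa = 2\pi$ and $\ell = \ell_\ep = 2\pi + \cO(\ep^2)$ yields
\begin{equation*}
\|\psi\|_{L^2}^2 = \int\kappa^2\,ds - 2\int \kappa\,ds + \int 1\,ds = \cO(\ep^2).
\end{equation*}
For the inductive step, suppose $\|\psi\|_{C^{n-1}} = \cO(\ep)$ and expand the equality $a_{n+1}(\Omega) = a_{n+1}(E_\ep)$ to second order around $\kappa\equiv 1$. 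The linear part contributes only terms of the form $\sum_j c_{n+1,j}\int\psi^{(j)}\,ds$; those with $j\geq 1$ vanish because $\partial\Omega$ is closed, and the $j=0$ term equals $c_{n+1,0}(2\pi-\ell) = \cO(\ep^2)$. The quadratic and higher remainder is bounded by $\|\psi\|_{C^{n-1}}^2 = \cO(\ep^2)$ by induction, while the analogous expansion for $E_\ep$ produces only $\cO(\ep^4)$ contributions. After cancellation, what remains is $c_{n+1}\int(\psi^{(n+1)})^2\,ds = \cO(\ep^2)$, so $\|\psi^{(n+1)}\|_{L^2} = \cO(\ep)$. Sobolev embedding $H^{n+1}(\partial\Omega) \hookrightarrow C^{n}(\partial\Omega)$, applied to the $H^{n+1}$ bound on $\psi$ assembled from the base case and this estimate, yields $\|\psi\|_{C^n} = \cO_n(\ep)$ and closes the induction.

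The main obstacle is keeping the inductive error linear in $\ep$ rather than $\sqrt{\ep}$. This is exactly where the observation above is decisive: the only derivative $\psi^{(j)}$ that fails to integrate to zero over the closed curve $\partial\Omega$ is the zeroth one, whose integral is $2\pi - \ell$, itself a spectral quantity of size $\cO(\ep^2)$. Without this cancellation the linear-in-$\psi$ contribution to the difference of heat invariants would only be $\cO(\|\psi\|_{C^{n-1}}) = \cO(\ep)$, leading to $\int(\psi^{(n+1)})^2\,ds = \cO(\ep)$ and hence the weaker bound $\|\psi\|_{C^n} = \cO(\sqrt{\ep})$, which is insufficient for the later steps of the proof of Theorem~\ref{MAINTHEO2} (notably the invocation of \cite{ADK}, which requires genuine $\cO_n(\ep)$ closeness in high $C^n$ norms).
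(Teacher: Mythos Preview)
Your overall strategy—use the triangular structure of the heat invariants together with Sobolev embedding to bootstrap $L^2$ control of successive derivatives of $\kappa$—matches the paper's. However, there is a genuine gap in the induction as you have written it.

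The induction hypothesis you invoke is $\|\psi\|_{C^{n-1}}=\cO(\ep)$, and you use it to assert that the ``quadratic and higher remainder'' in the heat invariant is $\cO(\|\psi\|_{C^{n-1}}^2)$. But your base case only furnishes $\|\psi\|_{L^2}=\cO(\ep)$, not $\|\psi\|_{C^0}=\cO(\ep)$. At the very first step you must control, for example, $\int\kappa^4(\Omega)-\int\kappa^4(E_\ep)$; expanding $(1+\psi)^4$ produces cubic and quartic terms $\int\psi^3$, $\int\psi^4$ that cannot be bounded from the $L^2$ base case alone. More generally, at every step the lower-order polynomial terms in the heat invariant produce products of three or more factors of $\psi^{(j)}$, and bounding these requires \emph{a priori} $C^k$ control of $\kappa(\Omega)$ that you have not established. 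The paper closes exactly this gap by invoking Melrose's isospectral compactness theorem, which gives uniform bounds $\|\kappa(\Omega)\|_{C^n}\le A_n$ for any $\Omega$ isospectral to $E_\ep$ (this is how the Cauchy--Schwarz step in the paper's induction controls the polynomial remainders). You cite Melrose only for the structural form of the invariants, not for this compactness input; without it your induction does not start. There is also an off-by-one slip: the heat invariant with leading term $\int(\kappa^{(n+1)})^2$ has lower-order part involving derivatives up to $\kappa^{(n)}$, not only up to $\kappa^{(n-1)}$.

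That said, your observation about the linear terms is a real refinement of the paper's argument. The paper only proves $\int(\psi^{(m)})^2\,ds=\cO(\ep)$, hence $\|\psi\|_{C^n}=\cO(\sqrt{\ep})$ after Sobolev, which is weaker than the stated $\cO(\ep)$. Your remark that $\int\psi^{(j)}\,ds=0$ for $j\ge1$ and $\int\psi\,ds=2\pi-\ell=\cO(\ep^2)$, together with the sharper estimate $\kappa(E_\ep)-1=\cO(\ep^2)$, does upgrade the conclusion to $\int(\psi^{(m)})^2=\cO(\ep^2)$ and hence $\|\psi\|_{C^n}=\cO(\ep)$—\emph{provided} the Melrose compactness bounds are in place to handle the higher-order terms. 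So the correct proof is the paper's argument supplemented by your linear-cancellation observation; either half alone is incomplete. (Your final claim that $\cO(\sqrt\ep)$ would be insufficient for the application of \cite{ADK} is not quite right, since that theorem only requires sufficient $C^{39}$-smallness, not a specific rate; but it is fair to aim for the bound the proposition actually states.)
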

As a corollary we will obtain:
\begin{coro} \label{close 2} Suppose $\Omega$ and $E_\ep$ are isospectral. Then one can apply a rigid motion to $\Omega$ after which its boundary can be written as $\d \Omega = \d E_0 + f N_0$, with $\| f \|_{C^n(\d D)} = \cO_n(\ep)$ for all $ n\geq 0$. Here $N_0$ is the outward unit normal of the unit disk $D=E_0$.  In terms of Definition \ref{NC}, it means that $\Omega$ is $\ep$-nearly circular in $C^n$. 
	\end{coro}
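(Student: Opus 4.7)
The plan is to reconstruct $\partial\Omega$ from its curvature via the fundamental theorem of plane curves, align it with the unit circle by a rigid motion, and then rewrite the result as a small radial graph. Proposition \ref{close} gives $\|\kappa(\Omega)-1\|_{C^n[0,\ell_\ep]}=\cO_n(\ep)$ for every $n$, and one checks directly that the ellipse perimeter satisfies $\ell_\ep-2\pi=\cO(\ep^2)$. Parametrize $\partial\Omega$ by arclength $s\in[0,\ell_\ep]$ and set $\alpha(s)=\int_0^s \kappa(u)\,du+\alpha_0$ for the tangent angle, so that $T(s)=(\cos\alpha(s),\sin\alpha(s))$ is the unit tangent and $\int_0^{\ell_\ep}\kappa\,du=2\pi$ by Gauss--Bonnet. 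Applying the rigid motion that sends the reference point $\gamma(0)$ to $(1,0)$ and the initial tangent to $(-\sin 0,\cos 0)=(0,1)$ forces $\alpha_0=\pi/2$, and comparing with the reference curve $\gamma_0(s)=(\cos s,\sin s)$ of the unit circle gives
$$\alpha(s)-(s+\pi/2)=\int_0^s(\kappa(u)-1)\,du=\cO_n(\ep) \quad \text{in } C^{n+1}.$$

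Smoothness of $\sin,\cos$ translates this into $\|T-T_0\|_{C^n}=\cO_n(\ep)$, and one more integration yields $\|\gamma-\gamma_0\|_{C^{n+1}[0,\ell_\ep]}=\cO_n(\ep)$. The slight period mismatch $\ell_\ep-2\pi=\cO(\ep^2)$ contributes only a lower-order correction on the common interval, because $\gamma_0(\ell_\ep)=(1,0)+\cO(\ep^2)$ automatically.

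To convert $\partial\Omega$ into the form $\partial D+fN_0$, observe that $\Omega$ is strictly convex by Proposition \ref{close} and $\gamma$ is $\cO(\ep)$-close in $C^0$ to the unit circle, so the origin lies in the interior of $\Omega$ and every ray from the origin meets $\partial\Omega$ transversally in exactly one point. Solving $\gamma(s)=r(\theta)(\cos\theta,\sin\theta)$ defines a reparametrization $\theta(s)$ with $\theta'(s)=1+\cO_n(\ep)$ in every order, and the implicit function theorem inverts this to a smooth $s(\theta)$ on $[0,2\pi]$. Setting $f(\theta):=r(\theta)-1=|\gamma(s(\theta))|-1$ and chasing the chain rule gives $\|f\|_{C^n(\partial D)}=\cO_n(\ep)$, which is exactly the conclusion in Definition \ref{NC}.

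The main technical obstacle is the arclength-to-polar reparametrization, where the chain rule introduces factors of $1/\theta'(s)$ and its higher derivatives into every $C^n$ estimate. Because $\theta'$ remains within $\cO_n(\ep)$ of $1$ at all orders, these factors stay uniformly bounded and the $\cO_n(\ep)$ bounds are preserved, at the cost of a Fa\`a di Bruno style bookkeeping whose constants depend only on $n$. The only other small point is verifying that the initial rigid motion places the origin inside $\Omega$; this follows automatically from the $C^0$-closeness of $\gamma$ to the unit circle established above.
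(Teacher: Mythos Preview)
Your proposal is correct and follows essentially the same route as the paper: reconstruct $\gamma_\Omega$ from its curvature via the fundamental theorem of plane curves, normalize by a rigid motion so that it is tangent to $\partial D$ at $(1,0)$, and then pass to the polar representation $r(\theta)=1+f(\theta)$. The one organizational difference is that you first prove $\|\gamma-\gamma_0\|_{C^{n+1}}=\cO_n(\ep)$ in the arclength parameter and only then reparametrize, which makes the change of variables $s\leftrightarrow\theta$ a routine chain-rule computation; the paper instead writes $g(\theta)=h_2(s(\theta))$ directly and must run an induction on $n$ to unwind the implicit dependence of $s(\theta)$ on $g$ through \eqref{s of theta}.
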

\begin{proof}[Proof of Proposition \ref{close}]
Let $\Delta_\Omega$ be the positive Laplacian with Dirichlet (or Neumann) boundary condition on $\Omega$. We recall the well-known heat trace asymptotic
\begin{equation}\label{heat}
\text{Tr}( e^{-t \Delta_\Omega}) \sim t^{-1} \sum_{ n \geq 0} a_n t^n + b_nt^{n +\half}, \qquad t \to 0^+. 
 \end{equation}
 In \cite{Me}, the following structural property is proved for the heat invariants $b_n$:
 \begin{equation}\label{b_n}
 b_{n+1} = c_n \int_0^{\ell} \kappa_n^2 \, ds + \sum_ {\alpha \geq 0} d_\alpha \int_0^{\ell} \kappa^{\alpha_0} \kappa_1^{\alpha_1} \dots \kappa_{n-1}^{\alpha_{n-1}} \, ds,  \qquad n \geq 1,
 \end{equation}
 where $\ell$ is the length of the boundary,  $\kappa_m$ denotes $\frac{d^m \kappa}{ds^m}$, $\alpha=(\alpha_0, \dots, \alpha_{n-1})$ is a multi-index in $\Z^n$, $c_n \neq 0$ and $d_\alpha$ are universal constants. We also have $\sum_{j=0}^{k-1} (1+j)\alpha_j =2n+2$, which in particular implies that the sum is finite and the number of terms is bounded by a constant dependent only on $n$.  For us it is also important to know that 
 \begin{equation}\label{b1}
 b_1 = c_1 \int_0^\ell \kappa^2 \, ds, \quad c_1 \neq 0,
 \end{equation}
 and 
 \begin{equation}\label{b2}
 b_2 = c_2 \int_0^\ell \kappa_1^2 \, ds + c'_2 \int_0^\ell \kappa^4 \, ds, \quad c_2 \neq 0.
 \end{equation}
 Melrose used the trace invariants $b_n$ to prove a pre-compactness for the class of isospectral domains to a given domain $D$. More precisely he showed that for each smooth domain $\Omega_0$, and each $n \geq 0$, there is $A_n$ such that for all $\Omega$ isospectral to $\Omega_0$, we have
 \begin{equation}\label{compactness}
 || \kappa(\Omega)||_{C^n} \leq A_n.
 \end{equation} 
 Suppose now that $\Omega$ is isospectral to $E_\ep$. We would like to show that for all $n\geq 0$ 
 \begin{equation}\label{difference of curvatures}
 || \kappa(\Omega) - \kappa(E_\ep)||_{C^n} = \cO_n (\ep).
 \end{equation}
 
 Since $\kappa(E_0) =1$, we have $\kappa(E_\ep) =1 +\cO(\ep)$ and $\kappa_n(E_\ep) = \cO_n(\ep)$ for $n\geq 1$. Thus it is sufficient by the Sobolev embedding theorem to show that 
 \begin{align}\label{L2}
 \int_0^ {\ell_\ep} & | \kappa(\Omega) - \kappa(E_\ep)|^ 2 ds = \cO(\ep),
 \\
 \label{Hn} \int_0^{\ell_\ep} & \kappa_n^2(\Omega) \, ds = \cO_n(\ep), \quad  n \geq 1.
 \end{align}
 To see \eqref{L2}, we first use the invariant $b_1$ in \eqref{b1} and the fact $\ell_\ep = 2 \pi +\cO(\ep)$, to get 
 $$ \int_0^{\ell_\ep} \kappa^2(\Omega) \, ds =  \int_0^{\ell_\ep} \kappa^2(E_\ep) \, ds= 2 \pi+ \cO(\ep).$$
 This and the facts $\int_0^{\ell_\ep} \kappa(\Omega) ds=2\pi$ and $\kappa(E_\ep) =1 +\cO(\ep)$, imply that
 \begin{align*}
\int_0^ {\ell_\ep} | \kappa(\Omega) - \kappa(E_\ep)|^ 2\, ds = \int_0^ {\ell_\ep}  \kappa^2(\Omega) + \kappa^2(E_\ep) - 2 \kappa(\Omega)\kappa(E_\ep) \, ds = \cO(\ep).
  \end{align*}
  To prove \eqref{Hn}, we use \eqref{b_n} and argue by induction on $n \geq 1$. In the first step, we note that by the expression \eqref{b2} for the invariant $b_2$ we have:
  $$\int_0^{\ell_\ep} \kappa^2_1(\Omega) \, ds = \int_0^{\ell_\ep} \kappa^2_1(E_\ep) \, ds + \frac{c'_2}{c_2} \left ( \int_0^{\ell_\ep} \kappa^4(E_\ep) -  \kappa^4(\Omega)  \, ds \right )$$
  However, we can bound the last expression by $\cO(\ep)$ using \eqref{compactness}, Cauchy-Schwartz inequality, and \eqref{L2} as follows:
  \begin{align*}
 \int_0^{\ell_\ep} \kappa^2_1(\Omega) \, ds \leq \cO(\ep) + \frac{c'_2}{c_2} \left \| \kappa(E_\ep) -  \kappa(\Omega) \right \|_{L^2} \left \| \kappa^3(E_\ep) +  \kappa^2(E_\ep) \kappa(\Omega) +  \kappa(E_\ep) \kappa^2(\Omega)  + \kappa^3(\Omega) \right \|_{L^2}= \cO(\ep).\end{align*}
 Let us now assume $ \int_0^{\ell_\ep} \kappa^2_m(\Omega) \, ds = \cO(\ep)$ for $1 \leq m \leq n-1$. By \eqref{b_n}, we have
 $$\int_0^{\ell_\ep} \kappa_n^2(\Omega) = \int_0^{\ell_\ep} \kappa_n^2(E_\ep) + \frac{1}{c_n} \sum_{\alpha \geq 0} d_{\alpha} \int_0^{\ell_\ep}  \kappa^{\alpha_0}(E_\ep) \dots \kappa_{n-1}^{\alpha_{n-1}}(E_\ep) - \kappa^{\alpha_0}(\Omega)\dots \kappa_{n-1}^{\alpha_{n-1}} (\Omega).$$ To conclude the induction, we need to show that the right hand side of the above identity is $\cO_n(\ep)$. Obviously, by the induction hypothesis, and since for all $n\geq 1$, $\| \kappa_n(E_\ep)\|_{C^0} = \cO_n(\ep)$, all terms involving at least one derivative of the curvature are of size $\cO_n (\ep)$ (note that we still need the apriori bounds \eqref{compactness}). So it remains to estimate the part of the sum involving no derivatives, which is a sum of terms (up to multiplication by a constant) in the form:
 $$ \int_0^{\ell_\ep} \kappa^{\alpha_0}(E_\ep) - \kappa^{\alpha_0}(\Omega) \, ds. $$ Again, as in the first step, we factor   $\kappa(E_\ep) - \kappa(\Omega)$ in the integrand, apply Cauchy-Schwartz, and use the apriori bounds \eqref{compactness}  to obtain the desired bound $\cO_n(\ep)$ for $\| \kappa(\Omega) - \kappa(E_\ep) \|_{C^n}$. Note that this implies the proposition because $ \| \kappa(E_\ep) -1 \|_{C^n} = \cO_n(\ep)$. 
 
  \end{proof}
\begin{proof}[Proof of Corollary \ref{close 2}]
 
 We first apply a rigid motion to $\d \Omega$ so that it becomes tangent to $\d D$ at $(1,0)$ and stays on the left side of the line $x=1$. We identify the point of tangency $(1, 0)$ with $s=0$. Then the parametrization $\gamma_\Omega$ is uniquely determined by its curvature by the expression
 \begin{equation} \label{gamma} \gamma_\Omega(s) = \left (1-\int_0^s \sin \left (\int _0^{s'} \kappa(\Omega)(s'') \, ds''\right ) \, ds'\, , \, \int_0^s \cos \left (\int _0^{s'} \kappa(\Omega)(s'') \, ds''\right ) \, ds' \right ),\end{equation}
Now let $\theta \in [0, 2\pi]$ be the arc-length parametrization of  the boundary of the unit disk $D$. We write $$\gamma_\Omega(s(\theta)) = (r(\theta) \cos \theta, r(\theta) \sin \theta).$$ 
We want to show that $r(\theta) = 1+ f(\theta)$ with $f(\theta) = \cO_n(\ep)$ in $C^n$ as this is the exactly what the corollary requires.  Here, $s(\theta)$ is given by 
\begin{equation}\label{s of theta} s(\theta) = \int_0^{\theta} \sqrt{r^2(\vartheta) + (r'(\vartheta)) ^2} \, d\vartheta = \int_0^{\theta} \sqrt{(1+f(\vartheta))^2 + (f'(\vartheta)) ^2} \, d\vartheta . \end{equation}

We note that
$$f(\theta) = \| \gamma_\Omega(s(\theta) \| -1. $$
Since by Proposition \ref{close}, we have $\kappa(\Omega)(s) = 1 + h_0(s)$ with $h_0(s)=\cO(\ep)$ in $C^n[0, \ell_\ep]$, by \eqref{gamma} we get
\begin{equation} \label{f} f( \theta) =  \sqrt{ 1+ h_1 (s(\theta))} -1 =h_2(s(\theta)), \end{equation}
with $h_1(s) = \cO(\ep)$ and $h_2(s) = \cO(\ep)$ in $C^n[0, \ell_\ep]$. We emphasize that since $s(\theta)$ depends on $f(\theta)$, we cannot immediately conclude from this equation that $f(\theta) = \cO(\ep)$ in $C^n$.  We proceed with induction. It is clear from \eqref{f} that $\| f \|_{C^0} = \cO(\ep)$. For $f'$ we have
$$ f'(\theta) = \sqrt{(1+f(\theta))^2 + (f'(\theta)) ^2} \; h_2'(s(\theta)).$$
Solving this for $(f')^2$ we get 
 $$ (f'(\theta))^2 = (1+f(\theta))^2 \frac{(h_2'(s(\theta)))^2}{1- {(h_2'(s(\theta))})^2},$$
 which is obviously $\cO(\ep)$ because $h_2 = \cO(\ep)$ in $C^n$ for all $n$. Now assume $\| f\|_{C^{n-1}} = \cO(\ep)$ for some $n \geq 2$. Differentiating \eqref{f} $n$ times,
 $$ f^{(n)}(\theta) =  \frac{ f^{(n)}(\theta) f'(\theta)}{\sqrt{(1+f(\theta))^2 + (f'(\theta)) ^2}} h_2'(s(\theta)) + R_n(\theta),$$
 where the remainder term $R_n$ depends on $f^{(k)}$, $k \leq n-1$, and $h_2^{(k)}$, $k \leq n$. From the induction hypothesis and the form of $R_n$, one can easily see that $R_n = \cO(\ep)$. Solving the above equation for $f^{(n)}$ and using $\|f \|_{C^1} = \cO(\ep)$ and $h_2' = \cO(\ep)$, we conclude the induction and the corollary follows.
\end{proof}

\section{The loop function and its first variation}
This section focuses on the iterations of the billiard map of a nearly circular domain.
Let us first introduce the billiard map and its periodic orbits.

\subsection{\label{BILLIARDSECT} Billiard map and $(p, q)$-periodic orbits}  Consider a $C^\infty$ strictly convex billiard table $\Omega$ with perimeter $\ell$. We parameterize its boundary in the counter-clockwise direction by its arc-length $s$. We define the phase space by $S_{\text{inward}}^* \d \Omega$, i.e. the inward vectors in the unit cotangent bundle of $\d \Omega$. We identify the phase space with
$$\Pi = \R / \ell \IZ  \times [0, \pi],$$ and use $(s, \phi)$ for a point in $\Pi$. Here, $\phi$ represents the angle that the inward unit vector at $s$ makes with the positive unit tangent vector at $s$, i.e. the tangent vectors in the counter-clockwise direction. The billiard map is a smooth twist map on the closed annulus $\Pi$. We write it as 
$$\begin{cases}
	\beta: \Pi \to \Pi,  \\  \beta(s, \phi)= \left( s_1(s, \phi), \phi_1(s, \phi) \right ).
\end{cases}  $$
It is natural and convenient to lift $\beta$ to $\hat \Pi = \IR \times [0, \pi]$. We shall use $(x, \phi)$ for points in $\hat \Pi$. We fix the lift and call it $\hat \beta$ by requiring that $\hat \beta (x, 0) = (x, 0)$. Then by the continuity of the lift we have $\hat \beta (x, \pi) = (x + \ell, \pi)$. We shall write
$$ \hat \beta (x, \phi) = (x_1 (x, \phi), \phi_1 (x, \phi)). $$
The billiard map satisfies the monotone twist property, meaning
$$\d_\phi x_1>0.$$
The map $\hat \beta$ also preserves the orientation and the boundaries of $\hat \Pi$. Moreover, $\hat \beta$ preserves the natural symplectic form $\sin\phi\, dx\wedge d\phi$ on the phase space $\hat \Pi$.  A diffeomorphism  $\beta$ of the annulus $\Pi$  whose lift $\hat \beta$ satisfies the above properties is called a twist map. We refer the readers to \cite{MF} for more on the properties of twist maps and Aubry-Mather theory.

 Note that we can write 
\begin{equation} \label{FG}  \hat \beta (x, \phi) = (x + F(x, \phi), G(x, \phi)), \end{equation}
where $F$ and $G$ are smooth, $\ell$-periodic in the $x$ variable, and $F(x, 0) = G(x, 0) =0$. We shall use $\hat \beta_0$ for the billiard map of the unit disk $D$. One can easily see that 
$$ \hat \beta_0 (x, \phi) = (x+2 \phi, \phi).$$  
A point $(s, \phi) \in \Pi$ is called a $(p, q)$ periodic  point of $\beta$ if $\beta^q(s, \phi) = (s, \phi)$ and the orbit $\{ \beta^{j}(s, \phi) \}_{0\leq j \leq q-1}$ winds $p$ times around $\d \Omega$ in the positive direction. This means that for any lift $(x, \phi)$ of $(s, \phi)$, we have 
$$ \hat \beta^q (x, \phi) = (x+ p \ell, \phi). $$ The ratio $\frac{p}{q}$ is called the rotation number of $(s, \phi)$. Since the rotation number of the time reversal of a periodic orbit of rotation number $\frac{p}{q}$ is given by $\frac{q-p}{p}$, we always assume that $1\leq p \leq \frac{q}{2}$. On the unit disk $E_0$, the $(p, q)$ periodic points form an invariant circle  given by $ \{ (s, \phi)| \, \phi = \pi p / q 
\}$.

\subsection{Expansions of the billiard map} For small angles $\phi$, the billiard map $\hat \beta(x, \phi)$ of a smooth strictly convex domain has a useful expansion (via Taylor's theorem) in the form
$$ \hat \beta (x, \phi) = \left ( x + \sum_{j=1}^{N-1} \alpha_j(x) \phi^j + F_N(x, \phi)\phi^N, \sum_{j=1}^{N-1} \beta_j(x) \phi^j + G_N(x, \phi) \phi^N \right ).$$
By Prop 14.2 of \cite{L}, the remainder terms $F_N$ and $G_N$ are bounded by $ \frac{\cO_N(1)}{\kappa_{\min}} \| \frac{1}{\kappa} \|_{C^{N-1}}$, where $\kappa(x)$ is the curvature at $x$ and $\kappa_{\min}$ is the minimum curvature. 
The coefficients $\alpha_j(x)$ and $\beta_j(x)$ can ba calculated in terms of $\kappa$ and its derivatives; see for example \cite{L} for the expressions of $\alpha_1, \dots, \alpha_4$ and $\beta_1, \dots, \beta_4$.  In fact in certain coordinates, called Lazutkin coordinates, the billiard map can be written in a simpler form. More precisely, if we denote
$$ \xi =  C_1 \int_0^x \kappa^{2/3}(x') dx', \qquad  \eta=  C_2 \kappa^{-1/3}(x) \sin (\phi/2).$$ 
where $C_1= {1 / \int_0^\ell \kappa^{2/3}(x') dx'}$ and $C_2 = 4C_1$, 
then in these coordinates the billiard map takes the form 
$$(\xi, \eta)  \to  \left( \xi + \eta + \sum_{j \geq 3} \tilde{\alpha}_j(\xi) \eta^j  , \eta + \sum_{ j \geq 4} \tilde{\beta}_j(\xi) \eta^j\right),$$
where the coefficient functions are $1$-periodic and smooth. The infinite sums are understood as asymptotic expansions as $\eta \to 0$ and not as convergent power series. 
The remarkable fact is the vanishing of the $\eta^2$ term in the first component and of the $\eta^2$ and $\eta^3$ terms in the second component. In fact as Lemma 14.6 of \cite{L} shows, one can go further inductively and find for each $N \geq 3$, new coordinates $(u, v)$ in which the billiard map is written as
\begin{equation}\label{Lazutkin of order N} (u, v)  \to  \left( u + v + \sum_{j \geq N} a_j(u) v^j  , v + \sum_{ j \geq N+1} {b}_j(u) v^j\right). \end{equation}
Moreover, the construction of this map reveals that the two coordinates $(\xi, \eta)$ and $(u, v)$ are related by
\begin{equation} \label{Lazutkin coordinates}(\xi, \eta) = \left ( u + v^2 A(u, v), v+ v^3 B(u, v)  \right). \end{equation}
Lazutkin coordinates can be very useful is proving existence of invariant curves as shown by Lazutkin but also in studying periodic orbits as done in \cite{DKW}.

\subsection{Nearly circular deformations} Suppose $\Omega$ is nearly circular in $C^1$. Recall that by Definition \ref{NC} this means that $\Omega$ is smooth and simply connected, and can be written as $\d \Omega = \d D + f N_0$, 
where $f$ is a smooth function on $\d D$ sufficiently small in $C^1$ and $N_0$ is the outward unit normal field to $\d D$. This also means that $\d \Omega$ is a polar curve given by $ r(\theta) = 1 + f(\theta)$, $\theta \in [0, 2\pi]$. In fact we will need to consider the linear deformation $\{\Omega_\tau \}_{0 \leq \tau \leq 1}$ defined by 
\begin{equation} \label{deformation}
\d \Omega_\tau = \d D + \tau f N_0, \end{equation}
or equivalently in polar coordinates, by
$$\d \Omega_\tau: \;  r(\tau, \theta) =1 + \tau f(\theta), \quad \theta \in [0, 2\pi].$$
Hence by this notation $\Omega_0 =E_0=D$ and $\Omega_1 = \Omega$. We denote the arc-length parametrization of $\d \Omega_\tau$ by $\gamma(\tau, s)$. The polar and arc-length parametrizations are related by
\begin{equation}\label{s and theta}
 \gamma(\tau, s(\tau, \theta)) = \big ( r(\tau, \theta )  \cos \theta , r(\tau, \theta) \sin \theta \big ), \end{equation}
where
\begin{equation}\label{s} s(\tau, \theta) = \int_0^\theta \sqrt{\left({\d_\theta r} \right)^2(\tau, \vartheta) + r^2(\tau, \vartheta)}  \, d \vartheta = \int_0^\theta \sqrt{ \tau^2 f'^2(\vartheta) + (1 + \tau f(\vartheta))^2} \, d\vartheta. \end{equation}
\begin{rema}
Throughout the paper we identify $s=0$ with $\theta=0$.  
\end{rema}

\subsection{First variations of the deformation}
The first \emph{normal variation} of $\d\Omega_\tau$ at $\tau$ is defined by 
\begin{equation}\label{first variation} n( \tau, s) =  \d_\tau \gamma (\tau, s) \bigcdot N(\tau, s), \end{equation}
where $N(\tau, s)$ is the outward unit normal at $\gamma(\tau, s)$. We also define the first \emph{tangential variation} of $\d \Omega_\tau$ by
\begin{equation}\label{first tangential variation} t( \tau, s) =  \d_\tau \gamma (\tau, s) \bigcdot T(\tau, s), \end{equation}
with $T(\tau, s)$ being the unit tangent in the positive direction. 

The following lemma bounds $n(\tau, s)$ and $t(\tau, s)$ in terms of the function $f(\theta)$. 

\begin{lemm}\label{n and t} Suppose $\|f\|_{C^1} \leq 1$. Then $$ n(\tau, s(\tau, \theta)) = \cO(\|f\|_{C^1}) \qquad \text{and}  \qquad  t(\tau, s(\tau, \theta)) = \cO(\|f\|_{C^1}). $$
\end{lemm}
\begin{proof} We differentiate \eqref{s and theta} and obtain
	\begin{align} \label{d tau} \d_\tau ( \gamma(\tau, s(\tau, \theta))) & = ( f(\theta) \cos \theta, \; f(\theta) \sin \theta), \\
	  \d_\theta ( \gamma(\tau, s(\tau, \theta)) ) & = \left ( \tau f'(\theta) \cos \theta - (1+ \tau f(\theta)) \sin \theta, \; \tau f'(\theta) \sin \theta + (1+ \tau f(\theta)) \cos \theta \right ). \label{d theta}
	  \end{align}
	  In particular from \eqref{d theta} we find that the unit outward normal is given by
	   $$ N ( \tau, s(\tau, \theta)) = \frac{ \left (\tau f'(\theta) \sin \theta + (1+ \tau f(\theta)) \cos \theta, \;  - \tau f'(\theta) \cos \theta + (1+ \tau f(\theta)) \sin \theta \right )}{\sqrt{\tau^2 f'^2(\theta) + (1+ \tau f(\theta))^2}}. $$
	   
	   On the other hand 
	   \begin{align*}(\d_\tau \gamma)(\tau, s(\tau, \theta))  & = \d_\tau ( \gamma(\tau, s(\tau, \theta)))  - \d_\tau s(\tau, \theta)   (\d_s \gamma)(\tau, s(\tau, \theta)),
	   \end{align*}
	   which using $\d_s \gamma \bigcdot N =0$ and $\d_s \gamma \bigcdot T =1$, implies that 
	   \begin{align*}  n(\tau, s(\tau, \theta))) & = \d_\tau ( \gamma(\tau, s(\tau, \theta))) \bigcdot N , \\
	   t(\tau, s(\tau, \theta)) ) & = \d_\tau ( \gamma(\tau, s(\tau, \theta))) \bigcdot T  - \d_\tau s(\tau, \theta).
	   \end{align*}
	   Thus by \eqref{d tau} and \eqref{d theta}, we get
	   \begin{align*} n(\tau, s(\tau, \theta)) & =  \frac{ f(\theta) (1+ \tau f(\theta) )}{\sqrt{\tau^2 f'^2(\theta) + (1+ \tau f(\theta))^2}}, \\
	    t(\tau, s(\tau, \theta)) & = \frac{ \tau f'(\theta) f(\theta)}{\sqrt{\tau^2 f'^2(\theta) + (1+ \tau f(\theta))^2}} - \d_\tau s(\tau, \theta). \end{align*}
	   The lemma then follows easily from $\d_\tau s(\tau, \theta) = \cO(\|f\|_{C^1})$.
	   \end{proof}  

 \subsection{Loop function and its first variation} Our primary purpose in this section is to study the $(1, q)$ periodic orbits of $\Omega_\tau$, hence in particular $\Omega=\Omega_1$, in terms of the ones  of $D$.  The main ingredients will be the loop functions and their linearizations in $\tau$.  We start by the following theorem that introduces what we will call the \textit {loop angle}. 
\begin{theo}\label{loop angle}
Let $\partial \Omega= \partial D + f N_0$ be nearly circular in $C^6$. There exists $\ep_0 >0$ sufficiently small such that if $\| f \|_{C^6} \leq \ep_0$, then for each $\tau \in [0, 1]$,  $s$ on $\d \Omega_\tau$, and $q \geq 2$, there exists a unique angle $\phi _{q} (\tau, s) \in (0, \pi)$ such that the orbit starting at $(s, \phi_q(\tau, s))$ and making $q$ reflections, winds around the boundary once in the counterclockwise direction, and returns to $s$ (not necessarily in the same direction). Moreover, $\phi_q(\tau, s)$ is smooth. We shall call $\phi_q(\tau, s)$ the $q$-loop angle of $\Omega_\tau$. 
\end{theo}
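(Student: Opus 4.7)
The plan is to apply the implicit function theorem (IFT) in the deformation parameter $\tau$, anchored at the explicit loop angle on the unit disk. At $\tau=0$ the lifted billiard map is $\hat\beta_0(x, \phi) = (x + 2\phi, \phi)$, so $\hat\beta_0^q(s, \phi) = (s + 2q\phi, \phi)$; the unique angle in $(0, \pi)$ producing a $(1,q)$-loop is therefore $\phi_q(0, s) = \pi/q$. Let $\hat s_q^\tau$ denote the first coordinate of $\hat\beta_\tau^q$ and encode the loop condition by
\[
\Phi_q(\tau, s, \phi) \;:=\; \hat s_q^\tau(s, \phi) - s - \ell_\tau,
\]
so that $\Phi_q(\tau, s, \phi)=0$ with $\phi \in (0, \pi)$ is exactly the $(1, q)$-loop equation at $s$ on $\Omega_\tau$. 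Because $r(\tau, \theta) = 1 + \tau f(\theta)$ is linear in $\tau$ and the arc-length integrand in \eqref{s} stays bounded away from zero when $\|f\|_{C^1}$ is small, $\hat\beta_\tau$ is jointly smooth in $(s, \phi)$ and analytic in $\tau$, hence so is $\Phi_q$. At the unperturbed solution $\partial_\phi \Phi_q(0, s, \pi/q) = 2q \neq 0$, so the IFT produces a unique smooth-in-$s$, analytic-in-$\tau$ branch $\phi_q(\tau, s)$ near $\pi/q$ on some maximal interval $\tau \in [0, \tau_q(s))$.

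The main work is to show that $\tau_q(s) \ge 1$ uniformly in $q \ge 2$ and $s$, with $\phi_q$ staying in $(0, \pi)$. I track the branch along the flow of $\tau$ using
\[
\frac{d\phi_q}{d\tau} \;=\; -\frac{\partial_\tau \Phi_q}{\partial_\phi \Phi_q}.
\]
The denominator $\partial_\phi \hat s_q^\tau$ equals $2q$ at $\tau = 0$, and its perturbation is controlled through the twist structure \eqref{FG} and the $C^2$-smallness of $f$. The numerator $\partial_\tau \Phi_q$ expands as a sum of $q$ bounce contributions, each (to leading order) proportional to the normal displacement $n(\tau, \cdot)$ from Lemma \ref{n and t} evaluated along the unperturbed orbit at the positions $s + 2\pi i/q$, $i = 0, \ldots, q-1$. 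The uniformity in $q$ then reduces to controlling the Riemann-sum average of $f$ against this equidistributed orbit, which gains factors of $1/q$ by integration by parts under the $C^6$ hypothesis. Together these give $|d\phi_q/d\tau| = O(\varepsilon_0)$ uniformly in $q$, so that $|\phi_q(\tau, s) - \pi/q| = O(\varepsilon_0)$ throughout $\tau \in [0, 1]$; this confines $\phi_q$ to $(0, \pi)$ and preserves $\partial_\phi \Phi_q \ge c q > 0$, so the branch extends to $\tau = 1$. Global uniqueness in $(0, \pi)$ follows from strict monotonicity of $\Phi_q(\tau, s, \cdot)$, inherited from the disk case $\Phi_q(0, s, \phi) = 2q\phi - 2\pi$ once $\|f\|_{C^6}$ is small.

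The principal obstacle is exactly this $q$-uniformity: a sup-norm iteration of the $O(\varepsilon)$ perturbation of $\hat\beta_\tau$ relative to $\hat\beta_0$ would give $\hat\beta_\tau^q - \hat\beta_0^q = O(q\varepsilon)$, which is too large once $q$ exceeds $1/\varepsilon$. The needed cancellation comes from the near-integrable, Lazutkin-type structure of billiards in nearly circular domains: the averaging of the deformation against the equidistributed unperturbed $(1, q)$-orbit supplies the missing factor $1/q$, and this is the reason the hypothesis is formulated in $C^6$ rather than in $C^1$.
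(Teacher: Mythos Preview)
Your continuation-by-IFT strategy is in the spirit of \cite{R,PR}, and the paper explicitly flags (see the Remark following Theorem \ref{loop angle}) that this route only yields the result for $\tau$ small and $\phi$ near $\pi/q$ unless one can make the neighborhood estimates uniform in $q$. You correctly isolate this as the obstacle, but the mechanism you propose for the $1/q$ gain is not the right one, and the uniqueness argument has a gap.

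\textbf{On the $q$-uniformity.} You assert that $\partial_\tau\Phi_q$ is a Riemann sum over the equidistributed disk orbit and that integration by parts (hence the $C^6$ hypothesis) recovers a factor $1/q$. That is not where the gain comes from. Writing $\hat\beta_\tau(x,\phi)=(x+2\phi+P_\tau,\,\phi+Q_\tau)$, Lemma \ref{perturbed beta} shows $P_\tau=\phi\,\cO(\|f\|_{C^6})$ and $Q_\tau=\phi^2\,\cO(\|f\|_{C^6})$: the perturbation of the billiard map carries intrinsic powers of $\phi$. Since any $(1,q)$-loop has $\phi\sim 1/q$, it is this $\phi$-scaling, propagated through the $q$-fold product of Jacobians (Theorem \ref{x_q increasing}), that keeps $\partial_\phi x_q=2q+q\,\cO(\|f\|_{C^6})$ and $\partial_\tau x_q=q\phi\,\cO(\|f\|_{C^6})=\cO(\|f\|_{C^6})$. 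No averaging cancellation is used, and the $C^6$ enters not through integration by parts but through the implicit-function estimates needed for the derivatives of $P_\tau,Q_\tau$. Your averaging argument, even if it could be made to work at $\tau=0$, says nothing at $\tau>0$ where the orbit of $\hat\beta_\tau$ is no longer equidistributed; yet your continuation needs the bound along the entire segment $\tau\in[0,1]$.

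\textbf{On uniqueness.} You claim strict monotonicity of $\phi\mapsto\Phi_q(\tau,s,\phi)$ on all of $(0,\pi)$, ``inherited'' from the disk. This is precisely the statement that $\hat\beta_\tau^q$ is a twist map, which fails for general convex domains (the image $\beta^q(B^*_s\partial\Omega)$ folds over the base $q$ times; see Section~1.2), and your argument does not establish it for nearly circular domains uniformly in $q$. The paper avoids this: Theorem \ref{x_q increasing} gives monotonicity only on $[0,C/q]$, and Corollary \ref{bound on loop angle} provides the a priori bound that every angle of any $(1,q)$-loop lies below $3\pi/(2q)$, forcing any solution into the region where monotonicity holds. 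Existence then follows from the intermediate value theorem rather than from IFT continuation.
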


\begin{rema} In \cite{R} and \cite{PR}, a similar statement is proved, however since the proof is based on the implicit function theorem at $\tau =0$, the above theorem  is obtained only for $\tau$ small, and for $\phi$ near $\pi/q$, which is the $q$-loop angle of the disk $\Omega_0$. The size of the neighborhoods of $\tau=0$ and $\phi =\pi /q$ are not estimated in these references. To do this, one probably needs a \textit{quantitative implicit function theorem} (see for example the online notes of Liverani \cite{Li}). In this paper, we take a different route and estimate the $q$-iterations of the billiard map more directly.\end{rema} 
In fact for technical reasons we will need a stronger result as follows. Below, $\ell_\tau$ is the perimeter of $\Omega_\tau$.
\begin{theo}\label{q path}
	Let $\partial \Omega= \partial D + f N_0$ be nearly circular in $C^6$. There exists $\ep_0 >0$ sufficiently small such that if $\| f \|_{C^6} \leq \ep_0$, then for each $\tau \in [0, 1]$,  $s$ and $s'$ on $\d \Omega_\tau$ with $|s-s'| <\frac{\ell_\tau}{100}$, and $q \geq 2$, there exists a unique angle $\alpha _{q} (\tau, s, s') \in (0, \pi)$, such that the orbit starting at $(s, \alpha_q(\tau, s, s'))$ and making $q$ reflections, winds around the boundary approximately once in the counterclockwise direction and ends at $s$.  The function $\alpha_q(\tau, s, s')$ is smooth in $(\tau, s, s')$. 
\end{theo}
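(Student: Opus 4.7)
The plan is to view Theorem \ref{q path} as a perturbation of Theorem \ref{loop angle} and solve for $\alpha_q$ by applying the implicit function theorem in the $\phi$-variable to the first coordinate of $\hat\beta_\tau^q$. Introduce the \emph{endpoint function}
\[
\Sigma_q(\tau, s, \phi) := \pi_1\bigl(\hat\beta_\tau^q(s, \phi)\bigr),
\]
where $\pi_1$ denotes projection onto the lifted arclength coordinate in $\hat\Pi = \R \times [0, \pi]$. Theorem \ref{loop angle} translates to the identity
\[
\Sigma_q(\tau, s, \phi_q(\tau, s)) \;=\; s + \ell_\tau,
\]
expressing that the $q$-loop winds exactly once and returns to $s$. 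The task of Theorem \ref{q path} is then to solve $\Sigma_q(\tau, s, \phi) = s' + \ell_\tau$ for $\phi$ near $\phi_q(\tau, s)$; the solution, provided it exists and is unique, will be $\alpha_q(\tau, s, s')$.

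Next, I would verify the twist hypothesis quantitatively. For the disk $\tau = 0$, one has $\hat\beta_0^q(s,\phi) = (s + 2q\phi, \phi)$, so $\partial_\phi \Sigma_q(0,s,\phi) = 2q$. For general $\tau \in [0,1]$ with $\|f\|_{C^6} \leq \ep_0$, the same uniform-in-$q$ estimates on derivatives of $\hat\beta_\tau^q$ that power the proof of Theorem \ref{loop angle} yield a bound
\[
\partial_\phi \Sigma_q(\tau, s, \phi) \;\geq\; c\, q
\]
on a neighborhood of $\phi_q(\tau,s)$ whose $\phi$-width is at least of order $1/q$. Since the perturbation in $s'$ is bounded by $\ell_\tau/100 = O(1)$, the inverse function theorem produces $\alpha_q(\tau, s, s')$ in this neighborhood, with change $|\alpha_q - \phi_q| = O(1/q)$, hence still comfortably in $(0,\pi)$ and close to $\pi/q$, so the orbit retains its topological type ($q$ reflections, approximately one winding). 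Smoothness in $(s, s')$ and analyticity in $\tau$ follow from the same properties of $\hat\beta_\tau$ and the implicit function theorem, since the deformation $\Omega_\tau$ depends polynomially (hence analytically) on $\tau$.

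The main obstacle is the same one lurking behind Theorem \ref{loop angle}: obtaining quantitative control on the first two derivatives of $\hat\beta_\tau^q$ that is \emph{uniform in $q$}. A naive chain-rule estimate degrades geometrically with $q$ and is useless in our regime, where we must allow $q$ as large as we like. One must instead exploit the near-integrable structure: the unperturbed map $\hat\beta_0$ preserves each horizontal circle $\{\phi = \text{const}\}$ and its $q$-th iterate has derivative growing only linearly in $q$. A perturbative argument based on the $C^6$-smallness of $f$ should propagate a uniform bound on the deviation $\hat\beta_\tau^q - \hat\beta_0^q$ and its first two $\phi$-derivatives, provided all iterates stay in a region of $\hat\Pi$ where $\phi$ remains comparable to $\pi/q$ (small) and the twist is nondegenerate. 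Once such uniform estimates are established, the proof of Theorem \ref{q path} reduces to a routine application of the implicit function theorem and a check that the solution interval in $s'$ can be taken of size at least $\ell_\tau/100$.
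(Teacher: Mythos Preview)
Your approach is essentially the paper's: what you call $\Sigma_q$ is exactly the paper's $x_q(\tau,x,\phi):=\text{Proj}_1\hat\beta_\tau^q(x,\phi)$, and the heart of the argument is the uniform-in-$q$ twist estimate $\partial_\phi x_q = 2q + q\,\cO(\|f\|_{C^6})$ on the region $\phi\leq C/q$ (the paper's Theorem~\ref{x_q increasing}), obtained by writing $D\hat\beta_\tau^q$ as a product of near-triangular matrices and controlling the growth carefully. You have correctly identified this as the main obstacle and sketched the right mechanism for overcoming it.

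Two points deserve comment. First, in the paper Theorems~\ref{loop angle} and~\ref{q path} are proved \emph{simultaneously}: once the monotonicity of $x_q$ in $\phi$ on $[0,3\pi/(2q)]$ is established, the intermediate value theorem gives $\alpha_q$ directly (and $\phi_q$ is just the diagonal case $s=s'$). Your packaging---take $\phi_q$ from Theorem~\ref{loop angle} and perturb by the implicit function theorem---is logically fine but slightly roundabout, since the same twist estimate yields both results at once.

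Second, and more substantively, your argument only delivers \emph{local} uniqueness of $\alpha_q$ near $\phi_q$, whereas the theorem asserts uniqueness in all of $(0,\pi)$. You need to rule out a second solution at some larger angle. The paper handles this with an a~priori bound (Corollary~\ref{bound on loop angle}): any $q$-reflected path from $x$ to $x'$ with $|x-x'|<\ell_\tau/100$ necessarily has all reflection angles bounded by $3\pi/(2q)$. This follows from the angle-propagation Lemma~\ref{angle of iterations} together with~\eqref{Lazutkin3}. Combined with the monotonicity of $x_q$ on $[0,3\pi/(2q)]$, it forces global uniqueness. You should add this step.
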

Here, by `winding around the boundary approximately once in the counterclockwise direction', we precisely mean that if $x$ and $x'$ are lifts of $s$ an $s'$ with $|x-x'| < \ell_\tau/100$, then $\hat \beta_\tau^q (x, \alpha_q(\tau, x, x')) = x' + \ell_\tau$, where $\hat \beta_\tau$ is the natural lift of the billiard map of $\Omega_\tau$. 

We will give the proof in Section \ref{proof of loop angle}. In the next few pages we draw some important consequences of these theorems.  

Note that by our notations, on the diagonal $s=s'$ we have $\alpha_q(\tau, s, s) = \phi_q(\tau, s)$. We shall call the angle $\phi_q(\tau, s)$ the $q$-loop angle at $s$. We will use $\phi_q(s)$ for the loop angle of $\Omega = \Omega_1$ instead of $\phi_q(1, s)$.  Obviously the loop angle satisfies 
$$ \hat \beta ^q (s, \phi _{q} (\tau, s) ) = (s, \tilde{\phi}_q(\tau, s) ), $$
for some angle $\tilde{\phi}_q(\tau, s)$.  The following lemma is then immediate. 
\begin{lemm} $(s, \phi)$ is a $q$-periodic point of $\d \Omega$ if and only if 
	$$ \phi_q(s) = \tilde{\phi}_q(s) = \phi.$$
	\end{lemm}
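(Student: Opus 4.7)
The plan is to prove this lemma directly by unpacking definitions, since all the substantive work is already encoded in Theorem \ref{loop angle}. I would handle the two implications separately. For the forward direction, I assume $(s,\phi)$ is a $(1,q)$-periodic point of $\partial\Omega$, so that $\beta^q(s,\phi) = (s,\phi)$ with winding number one. Then the billiard orbit starting at $(s,\phi)$ makes exactly $q$ reflections, winds once around $\partial\Omega$, and returns to the boundary point $s$. By the uniqueness clause of Theorem \ref{loop angle} applied with $\tau = 1$, the $q$-loop angle $\phi_q(s) \in (0,\pi)$ is the \emph{only} angle with this property, so $\phi = \phi_q(s)$. Since $\tilde\phi_q(s)$ is defined by the identity $\hat\beta^q(s,\phi_q(s)) = (s,\tilde\phi_q(s))$, the periodicity hypothesis $\beta^q(s,\phi) = (s,\phi)$ then forces $\tilde\phi_q(s) = \phi = \phi_q(s)$.

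For the reverse direction, I assume $\phi_q(s) = \tilde\phi_q(s) = \phi$. Substituting into the defining identity $\hat\beta^q(s,\phi_q(s)) = (s,\tilde\phi_q(s))$ immediately yields $\beta^q(s,\phi) = (s,\phi)$, which is the desired periodicity; the winding number is one automatically by the construction of $\phi_q$ in Theorem \ref{loop angle}.

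The only bookkeeping point worth flagging is that $\phi_q(s)$ was defined so that the corresponding loop winds exactly once counterclockwise, so the lemma should be read as characterizing $(1,q)$-periodic points specifically. Beyond this there is no real obstacle: the lemma is essentially a tautological reformulation of the uniqueness statement in Theorem \ref{loop angle} combined with the definition of $\tilde\phi_q$, and the genuine content lies in Theorem \ref{loop angle} itself (whose proof is deferred to Section \ref{proof of loop angle}).
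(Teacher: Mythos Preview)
Your proof is correct. In the paper this lemma is stated as ``immediate'' and no proof is given; your argument spells out precisely the tautological unpacking of the uniqueness clause in Theorem \ref{loop angle} together with the defining identity $\hat\beta^q(s,\phi_q(s)) = (s,\tilde\phi_q(s))$, which is exactly what the authors have in mind.
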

We now define the main ingredients of this article, namely the $q$-length function and its first variation. For the rest of the paper we assume that $\|f \|_{C^6}$ is small enough so that Theorems \ref{loop angle} and \ref{q path} hold. 
\begin{def1}[Length and Loop functions] \label{Psi}Let $\Omega$ be $C^6$ sufficiently close to the unit disk $E_0$, and let $q \geq 2$. The $q$-length function $\Psi_q(s, s')$, defined on $|s -s'| <\ell_\tau/100$, is the length of the unique $q$ times reflected geodesic form $s$ to $s'$ defined in Theorem \ref{q path}. The $q$-loop function $L_q(s)$ is the length of the unique $q$-loop at $s$ defined by Theorem \ref{loop angle}, i.e $L_q(s) = \Psi_q(s, s)$.  More precisely, if $\gamma(s)$ is the arc-length parametrization of $\Omega$,
	\begin{equation} \label{length function}
	\Psi_q(s, s') = \sum_{j=0}^{q-1} \| \gamma(s_{j+1}) - \gamma(s_j) \|, \quad s_j = \text{Proj}_1 \beta^j (s, \alpha_q(s, s')),
	\end{equation} 
	where $\text{Proj}_1$ is the projection map onto the base component. Similarly,
	\begin{equation} \label{loop function}
	L_q(s) = \sum_{j=0}^{q-1} \| \gamma(s_{j+1}) - \gamma(s_j) \|, \quad s_j = \text{Proj}_1 \beta^j (s, \phi_q(s)).
	\end{equation}
	Note that since $\alpha_{q}(s, s')$ is smooth, $\Psi_q(s, s')$ and $L_q(s)$ are also smooth. Correspondingly,  we denote the $q$-loop functions of the deformation $\Omega_\tau$ by $L_q(\tau, s)$. 
\end{def1}
The following lemma, although simple, gives a very useful characterization of $q$-periodic orbits in terms of the $q$-loop function. 
\begin{lemm}\label{critical points} We have 
	$$\{ (s, q) \in \Pi ;\, \text{$(s, \phi)$ is $q$-periodic} \} = \{ (s, \phi_q(s));  \,  L_q'(s)=0 \}.$$ 
	In other words, $q$-periodic orbits correspond to the critical points of $L_q$. 
\end{lemm}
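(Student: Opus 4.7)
The key observation is that, by the unlabeled lemma stated just above Lemma \ref{critical points}, a point $(s,\phi)\in\Pi$ is $(1,q)$-periodic iff $\phi_q(s)=\tilde\phi_q(s)=\phi$. Hence the left-hand side of the statement already equals $\{(s,\phi_q(s)):\phi_q(s)=\tilde\phi_q(s)\}$, and the whole lemma reduces to establishing
$$L_q'(s)=0\iff \phi_q(s)=\tilde\phi_q(s).$$
The plan is to prove this via the classical first-variation (``envelope'') identity for the billiard length functional $\Psi_q$.

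To carry this out, I would start from $L_q(s)=\Psi_q(s,s)$ so that, by the chain rule,
$$L_q'(s)=\partial_s\Psi_q(s,s)+\partial_{s'}\Psi_q(s,s).$$
Next, writing $\Psi_q(s,s')=\sum_{j=0}^{q-1}\|\gamma(s_{j+1})-\gamma(s_j)\|$ with $s_0=s$, $s_q=s'$, and intermediate reflection points $s_1(s,s'),\dots,s_{q-1}(s,s')$ depending smoothly on $(s,s')$ by Theorem \ref{q path}, one observes that the reflection law at each interior vertex is exactly the critical point equation
$$\partial_{s_k}\bigl(\|\gamma(s_k)-\gamma(s_{k-1})\|+\|\gamma(s_{k+1})-\gamma(s_k)\|\bigr)=0,\qquad k=1,\dots,q-1.$$
By the envelope argument all interior-vertex contributions cancel in the chain-rule expansions of $\partial_s\Psi_q$ and $\partial_{s'}\Psi_q$. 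The remaining endpoint derivatives, $\partial_{s_0}\|\gamma(s_1)-\gamma(s_0)\|$ and $\partial_{s_q}\|\gamma(s_q)-\gamma(s_{q-1})\|$, are computed directly: the unit vector $(\gamma(s_1)-\gamma(s_0))/\|\gamma(s_1)-\gamma(s_0)\|$ is the inward launching direction of the trajectory, which by definition of $\alpha_q(s,s')$ makes angle $\alpha_q(s,s')$ with the positive tangent $\gamma'(s_0)$, and analogously at $s_q$ for the arrival angle $\tilde\alpha_q(s,s')$. This yields the standard generating-function identity
$$\partial_s\Psi_q(s,s')=-\cos\alpha_q(s,s'),\qquad \partial_{s'}\Psi_q(s,s')=\cos\tilde\alpha_q(s,s').$$

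Specializing to $s'=s$ and using $\alpha_q(s,s)=\phi_q(s)$, $\tilde\alpha_q(s,s)=\tilde\phi_q(s)$, I obtain
$$L_q'(s)=-\cos\phi_q(s)+\cos\tilde\phi_q(s).$$
Since $\phi_q(s),\tilde\phi_q(s)\in(0,\pi)$ and cosine is injective on this interval, $L_q'(s)=0$ is equivalent to $\tilde\phi_q(s)=\phi_q(s)$, which, combined with the preceding lemma, finishes the proof.

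The only step that requires real attention is the envelope computation for the endpoint partials of $\Psi_q$; this is essentially the classical generating-function identity for billiard twist maps, and it relies on the smooth dependence of the interior bounce points $s_k(s,s')$ furnished by Theorem \ref{q path}. Everything else is routine sign/angle bookkeeping in the conventions of Section \ref{BILLIARDSECT}.
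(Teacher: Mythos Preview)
Your proof is correct and is essentially the same argument as the paper's: both compute $L_q'(s)=\cos\tilde\phi_q(s)-\cos\phi_q(s)$ and conclude via injectivity of cosine on $(0,\pi)$. The only cosmetic difference is that the paper differentiates $L_q(s)$ directly and telescopes the resulting sum using the reflection law at the interior vertices, whereas you package the same cancellation as the envelope/generating-function identity for $\Psi_q(s,s')$ before restricting to the diagonal.
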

	\begin{proof}
Consider the loop $\{(s_j, \vartheta_j)\}_{0 \leq j \leq q}$ (not necessarily a periodic orbit) generated by $(s, \phi_q(s))$, meaning 
$$ (s_j , \vartheta_j)= \beta^j (s, \phi_ q(s)).$$
By this notation $(s_0 , \vartheta_0) = (s, \phi_q(s))$ and $(s_q , \vartheta_q) = (s, \tilde{\phi}_q(s))$. 
Differentiating $L_q$ we get
 \begin{align*}
 L_q'(s) &= \sum_{j=0}^{q-1} \left ( \frac{ds_{j+1}}{ds} \frac{d\gamma}{ds}(s_{j+1}) -  \frac{ds_{j}}{ds} \frac{d\gamma}{ds}(s_{j}) \right ) \bigcdot \frac{\gamma(s_{j+1}) - \gamma(s_j)}{ \| \gamma(s_{j+1}) - \gamma(s_j) \|}
 \\ & = \sum_{j=0}^{q-1} \cos \vartheta_{j+1} \frac{ds_{j+1}}{ds} -  \cos \vartheta_j \frac{ds_{j}}{ds} 
 \\ & = \cos \vartheta_{q} \frac{ds_{q}}{ds} -  \cos \vartheta_0 \frac{ds_{0}}{ds}
 \\ & = \cos \vartheta_{q} -  \cos \vartheta_0.
  \end{align*}
  This shows that $s$ is a critical point of $L_q$ if and only if $\vartheta_0=\vartheta_q$, which by our notation means $\phi_q(s) = \tilde{\phi}_q(s)$. 
\end{proof}
  Next we define the first variation of $L_q$.
  \begin{def1} \label{Melnikov 1} Let $\Omega_\tau$ be a deformation of the unit disk as in Theorem \ref{loop angle} so that the loop angles and loop functions are defined. For each $q \geq 2$,  the first variation of the $q$-loop function is denoted by $ M_q(\tau, s) = \d_\tau L_q(\tau, s). $
  	\end{def1}
  More explicitly we have, 
  \begin{lemm}\label{Melnikov 2} Let $n(\tau, s)$ and $t(\tau, s)$  be the first normal and tangential variations of $\d \Omega_\tau$ defined by \eqref{first variation} and \eqref{first tangential variation}, and let $\{(s_j(\tau, s) , \vartheta_j(\tau,s))\}_{j=0}^{q}$ be the $q$-loop generated by $(s, \phi_q(\tau, s))$, i.e. $ (s_j (\tau, s) , \vartheta_j(\tau, s))= \beta_\tau^j (s, \phi_ q(\tau, s))$, where $\beta_\tau$ is the billiard map of $\Omega_\tau$.  Then 
  \begin{align*}  M_q(\tau, s) = & \; n(\tau, s) \big (\sin \tilde \phi_q(\tau, s) + \sin \phi_q(\tau, s)\big) + t(\tau, s) \big(\cos \tilde \phi_q(\tau, s) - \cos \phi_q(\tau, s)\big) \\ & +2\sum_{j=1}^{q-1} n(\tau, s_j(\tau, s)) \sin \vartheta_j(s, \tau).   \end{align*}
  In particular, when $(s, \phi_q(\tau, s))$ corresponds to a $q$-periodic orbit, i.e. $\phi_q(\tau, s) = \tilde \phi_q(\tau, s)$, we get
   \begin{align*}
   M_q(\tau, s) = 2\sum_{j=0}^{q-1} n(\tau, s_j(\tau, s)) \sin \vartheta_j(s, \tau). 
  \end{align*}
  \end{lemm}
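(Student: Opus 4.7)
The plan is to differentiate the explicit formula \eqref{loop function} for $L_q(\tau,s)$ directly with respect to $\tau$ and then group the resulting terms by bounce index. Writing $s_j=s_j(\tau,s)$ and letting $u_j=\bigl(\gamma(\tau,s_{j+1})-\gamma(\tau,s_j)\bigr)\big/\|\gamma(\tau,s_{j+1})-\gamma(\tau,s_j)\|$ denote the unit direction of the $j$-th leg, the chain rule gives
\begin{equation*}
M_q(\tau,s)=\sum_{j=0}^{q-1} u_j\bigcdot\Bigl[\partial_\tau\gamma(\tau,s_{j+1})+T(\tau,s_{j+1})\,\partial_\tau s_{j+1}-\partial_\tau\gamma(\tau,s_j)-T(\tau,s_j)\,\partial_\tau s_j\Bigr].
\end{equation*}
Re-indexing so that each bounce point $s_k$ appears only once, the coefficient of the terms based at an interior vertex $s_k$ ($1\le k\le q-1$) is $u_{k-1}-u_k$, while at the two boundary indices $s_0$ and $s_q$ one gets, respectively, $-u_0$ and $u_{q-1}$.

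Next, at each interior bounce I apply the reflection law: since $u_k$ is obtained from $u_{k-1}$ by flipping the normal component, one has $(u_{k-1}-u_k)\bigcdot T(\tau,s_k)=0$ and $(u_{k-1}-u_k)\bigcdot N(\tau,s_k)=2\sin\vartheta_k$. The first identity kills every $T\,\partial_\tau s_k$ contribution (this is the standard first-variation principle at a stationary broken geodesic), so we need not compute the implicit derivatives $\partial_\tau s_k$ at all. Combining the second identity with the tangential--normal decomposition $\partial_\tau\gamma(\tau,s_k)=n(\tau,s_k)\,N(\tau,s_k)+t(\tau,s_k)\,T(\tau,s_k)$ produces the term $2\,n(\tau,s_k)\sin\vartheta_k$ for each $k\in\{1,\dots,q-1\}$, which is precisely the sum appearing in the lemma.

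For the endpoints I use that $s_0=s_q=s$ is independent of $\tau$, so both $\partial_\tau s_0$ and $\partial_\tau s_q$ vanish and no $T\,\partial_\tau s$ boundary term appears. What remains is $(u_{q-1}-u_0)\bigcdot\partial_\tau\gamma(\tau,s)$. By the definition of $\phi_q$, the outgoing direction at $s_0$ is inward-pointing and makes angle $\phi_q$ with $T(\tau,s)$, so $u_0=\cos\phi_q(\tau,s)\,T(\tau,s)-\sin\phi_q(\tau,s)\,N(\tau,s)$. Symmetrically, by the definition of $\tilde\phi_q$ and the reflection law, the incoming direction $u_{q-1}$ at $s_q$ is outward-pointing and satisfies $u_{q-1}\bigcdot T(\tau,s)=\cos\tilde\phi_q$ and $u_{q-1}\bigcdot N(\tau,s)=\sin\tilde\phi_q$. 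Substituting $\partial_\tau\gamma(\tau,s)=n(\tau,s)N(\tau,s)+t(\tau,s)T(\tau,s)$ yields exactly
\begin{equation*}
n(\tau,s)\bigl(\sin\tilde\phi_q(\tau,s)+\sin\phi_q(\tau,s)\bigr)+t(\tau,s)\bigl(\cos\tilde\phi_q(\tau,s)-\cos\phi_q(\tau,s)\bigr),
\end{equation*}
and summing with the interior contributions gives the first formula. The periodic specialization is then immediate: when $\tilde\phi_q=\phi_q$ and $\vartheta_0=\phi_q$, the tangential endpoint term vanishes and the normal endpoint term becomes $2\,n(\tau,s)\sin\vartheta_0$, which merges into the interior sum as the missing $j=0$ summand.

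The only real obstacle is bookkeeping: $u_0$ is \emph{outgoing} at $s_0$ (inward for the ball) while $u_{q-1}$ is \emph{incoming} at $s_q$ (outward), so although the tangential projections are $\cos\phi_q$ and $\cos\tilde\phi_q$, the normal projections carry opposite signs $-\sin\phi_q$ and $+\sin\tilde\phi_q$. This sign asymmetry is precisely what produces the \emph{sum} $\sin\tilde\phi_q+\sin\phi_q$ together with the \emph{difference} $\cos\tilde\phi_q-\cos\phi_q$ in the endpoint term, and must be tracked carefully; apart from that the argument is a direct variational calculation.
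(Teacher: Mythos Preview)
Your proof is correct and follows essentially the same approach as the paper: differentiate $L_q(\tau,s)$ directly, split using the chain rule, use the reflection law to kill the $T(\tau,s_k)\,\partial_\tau s_k$ contributions, use $\partial_\tau s_0=\partial_\tau s_q=0$ at the endpoints, and decompose $\partial_\tau\gamma$ in the $(N,T)$ frame. The only cosmetic difference is that the paper first separates the full derivative into the two sums $\Sigma_1$ (the $\partial_\tau s_j$ terms) and $\Sigma_2$ (the $\partial_\tau\gamma$ terms), telescoping $\Sigma_1$ to zero before regrouping $\Sigma_2$ by bounce index, whereas you regroup by bounce index first and then invoke the reflection law locally; the underlying cancellations are identical.
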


  \begin{proof} For simplicity we use $s_j$ and $\vartheta_j$ for $s_j (\tau, s)$ and $\vartheta_j(\tau, s)$. We then write
  	$$L_q(\tau, s) = \sum_{j=0}^{q-1} \| \gamma(\tau, s_{j+1}) - \gamma(\tau, s_j) \|. $$
  	Taking the variation we get
  	\begin{align*}
  	\d_\tau L_q(\tau, s) = & \sum_{j=0}^{q-1} \left ( \d_\tau s_{j+1} \d_s \gamma(\tau, s_{j+1}) -   \d_\tau s_{j} \d_s \gamma(\tau, s_{j}) \right ) \bigcdot \frac{\gamma(\tau, s_{j+1}) - \gamma(\tau, s_j)}{ \| \gamma(\tau, s_{j+1}) - \gamma(\tau, s_j) \|}
  	\\ &  + \sum_{j=0}^{q-1} \left ( \d_\tau \gamma(\tau, s_{j+1}) -   \d_\tau \gamma(\tau, s_{j}) \right ) \bigcdot \frac{\gamma(\tau, s_{j+1}) - \gamma(\tau, s_j)}{ \| \gamma(\tau, s_{j+1}) - \gamma(\tau, s_j) \|} .
  	\end{align*}
  
Let us denote the two sums by $\Sigma_1$ and $\Sigma_2$, respectively. For $\Sigma_1$, a similar computation as in the proof of Lemma \eqref{critical points} shows that
$$ \Sigma_1   = \sum_{j=0}^{q-1} \cos \vartheta_{j+1} \d_\tau  s_{j+1} -  \cos \vartheta_j  \d_\tau  s_{j}
 = \cos \vartheta_{q} \d_\tau  s_{q} -  \cos \vartheta_0 \d_\tau  s_{0}. 
 $$
 However, because $s_q(\tau, s) = s_0 (\tau, s) =s$, we get $\Sigma_1 =0$. 
 For $\Sigma_2$, we rearrange the sum into 
 \begin{align*} \Sigma_2 = & \sum_{j=1}^{q-1} \left ( \frac{\gamma(\tau, s_{j}) - \gamma(\tau, s_{j-1})}{ \| \gamma(\tau, s_{j}) - \gamma(\tau, s_{j-1}) \|}  -\frac{\gamma(\tau, s_{j+1}) - \gamma(\tau, s_j)}{ \| \gamma(\tau, s_{j+1}) - \gamma(\tau, s_j) \|}   \right )  \bigcdot  \d_\tau \gamma(\tau, s_{j})
 \\ &+ \left ( \frac{\gamma(\tau, s_{q}) - \gamma(\tau, s_{q-1})}{ \| \gamma(\tau, s_{q}) - \gamma(\tau, s_{q-1}) \|}  -\frac{\gamma(\tau, s_{1}) - \gamma(\tau, s_0)}{ \| \gamma(\tau, s_{1}) - \gamma(\tau, s_0) \|}   \right )  \bigcdot  \d_\tau \gamma(\tau, s_{0})
 \end{align*}
 Now let $N(\tau, s)$ and $T(\tau, s)$ be the unit outward normal and unit positive tangent of $\d \Omega_\tau$ at $s$, respectively.  Then
 \begin{align*}
 \Sigma_2 =&  \sum_{j=1}^{q-1} \d_\tau \gamma(\tau, s_j) \bigcdot \big ( N(\tau, s_j) \sin \vartheta_j + T(\tau, s) \cos \vartheta_j + N(\tau, s_j) \sin \vartheta_j - T(\tau, s) \cos \vartheta_j  \big )
 \\ & + \d_\tau \gamma(\tau, s_0) \bigcdot \big ( N(\tau, s_0) \sin \vartheta_q + T(\tau, s_0) \cos \vartheta_q + N(\tau, s_0) \sin \vartheta_0 - T(\tau, s_0) \cos \vartheta_0   \big ), \end{align*} 
 and the lemma follows by noting that $\vartheta_0 = \phi_q(s)$ and $\vartheta_q = \tilde \phi_q(s)$. 
 \end{proof}
 
 The following estimate on $M_q$ will be useful. 
  \begin{lemm}\label{bound on Melnikov} For sufficiently small $\|f\|_{C_2}$, $M_q(\tau, s)  = \cO ( \|f\|_{C^1})$ \end{lemm}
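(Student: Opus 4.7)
The plan is to combine the explicit formula for $M_q(\tau,s)$ from Lemma \ref{Melnikov 2} with the pointwise estimates for the normal and tangential variations provided by Lemma \ref{n and t}. Provided $\|f\|_{C^1}$ is small enough (which is ensured by $\|f\|_{C^6}$ being small), the denominators $\sqrt{\tau^2 f'^2+(1+\tau f)^2}$ appearing in Lemma \ref{n and t} are bounded away from zero, so that $|n(\tau,s)|=\cO(\|f\|_{C^0})$ and $|t(\tau,s)|=\cO(\|f\|_{C^1}^2)$ hold uniformly in $\tau\in[0,1]$ and $s$.

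Granting this, the two ``boundary terms'' $n(\tau,s)(\sin\tilde\phi_q+\sin\phi_q)$ and $t(\tau,s)(\cos\tilde\phi_q-\cos\phi_q)$ in Lemma \ref{Melnikov 2} are trivially $\cO(\|f\|_{C^0})$ and $\cO(\|f\|_{C^1}^2)$ respectively, since the trigonometric factors are bounded by $2$. The substantive estimate is for the sum $2\sum_{j=1}^{q-1}n(\tau,s_j)\sin\vartheta_j$, which a priori has $q-1$ terms each of size $\cO(\|f\|_{C^0})$ and could therefore grow linearly in $q$. The point is that for the unperturbed disk one has $\vartheta_j=\pi/q$ exactly, so $\sin\vartheta_j=\sin(\pi/q)\le \pi/q$; the quantitative control on the billiard iterates built into the proof of Theorem \ref{q path} ensures that for $\|f\|_{C^6}$ sufficiently small, every angle $\vartheta_j$ remains close to $\pi/q$ with $\sin\vartheta_j=\cO(1/q)$ uniformly in $j$, $q$, and $\tau\in[0,1]$. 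This $1/q$ saving cancels the factor of $q$ coming from the number of summands, giving the uniform bound
$$\Bigl|\sum_{j=1}^{q-1} n(\tau,s_j)\sin\vartheta_j\Bigr| = \cO(\|f\|_{C^0}).$$

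Putting the three pieces together yields
$$|M_q(\tau,s)|=\cO(\|f\|_{C^0})+\cO(\|f\|_{C^1}^2)=\cO(\|f\|_{C^2}),$$
since $\|f\|_{C^0}\le \|f\|_{C^2}$ and $\|f\|_{C^1}^2\le \|f\|_{C^2}$ once $\|f\|_{C^2}$ is small. The only nontrivial input is the uniform-in-$q$ bound $\sin\vartheta_j=\cO(1/q)$: this cannot be read off from a naive iteration of the billiard map, since reflection angles can a priori drift across many bounces, but it is precisely the kind of uniform estimate that Theorem \ref{q path} and its quantitative proof in Section \ref{proof of loop angle} are designed to supply. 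Everything else is a direct substitution of Lemma \ref{n and t} into the formula of Lemma \ref{Melnikov 2}, so the expected main obstacle is entirely absorbed into results already available at this point in the paper.
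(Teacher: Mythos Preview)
Your argument is correct, and the overall structure matches the paper's: both combine Lemma \ref{Melnikov 2} with Lemma \ref{n and t} to reduce the question to controlling $\sum_{j=1}^{q-1}\sin\vartheta_j$ uniformly in $q$. The difference is in how that sum is bounded. You invoke the pointwise estimate $\vartheta_j=\cO(1/q)$, which does follow from Corollary \ref{bound on loop angle} (a consequence of the machinery in Section \ref{proof of loop angle}), and then sum. The paper takes a shorter route: it uses only the classical Lazutkin inequality \eqref{Lazutkin3}, which gives $\vartheta_j\le \tfrac{\kappa_{\max}}{2}(s_{j+1}-s_j)$, and then sums over $j$; since the loop winds once around $\partial\Omega_\tau$, the telescoped arclengths total $\ell_\tau$, yielding $\sum_{j}\vartheta_j\le \tfrac12\ell_\tau\kappa_{\max}\le 3\pi$ for $\|f\|_{C^2}$ small. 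This avoids any appeal to the $q$-dependent angle control developed for Theorem \ref{q path} and relies only on an elementary estimate already known from \cite{L}. Your route is valid but uses more than is needed; the paper's bound is simpler and logically prior.
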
 
 \begin{proof}By Lemmas \ref{Melnikov 2} and \ref{n and t}, we get
   \begin{align*}  M_q(\tau, s) = &\left (1 +\sum_{j=1}^{q-1}  \vartheta_j(s, \tau) \right) \cO ( \|f\|_{C^1}).   \end{align*}
   By \eqref{Lazutkin3}, for sufficiently small $\|f\|_{C_2}$, we get $\sum_{j=1}^{q-1}  \vartheta_j(s, \tau)  \leq \frac12l_\tau \max \kappa_\tau \leq 3\pi.$
   \end{proof}

\subsection{Proof of Theorem \ref{loop angle} } \label{proof of loop angle}
Let $\Omega_\tau$, $0 \leq \tau \leq 1$, be as in the previous section, meaning 
$$\d \Omega_\tau = \d E_0 + \tau f(\theta) N_0,$$
where $E_0$ is the unit disk and $f(\theta)$ is a smooth function. We will always use the notation $\ell_\tau =\, \text{length}(\d \Omega_\tau)$. 
Instead of the billiard maps $\beta_\tau$, it is more convenient to use the natural lifts $\hat \beta_\tau$ because they are all defined on the same space $\hat \Pi=\R \times [0, \pi]$.  
To prove Theorem \ref{loop angle}, we need to study the $q$-iterates $\hat \beta^q_\tau$, but before doing this we need the following important perturbative lemma for $\hat \beta_\tau$. We recall that $\hat \beta_0(x,  \phi) = (x+2 \phi, \phi)$. 
\begin{lemm}\label{perturbed beta} Let $|| f||_{C^6} \leq 1$, and $||f||_{C^2}$ be sufficiently small. Then $ \hat \beta_\tau (x, \phi)$ can be written as 
$$\hat \beta_\tau (x, \phi) = ( x + 2 \phi + P_\tau(x, \phi), \phi + Q_\tau(x, \phi)), $$
where $P_\tau$ and $Q_\tau$ are analytic families of $\ell_\tau$ periodic functions in $x$ and 
$$ i, j, k =0, 1: \quad \d_\tau^i \d_x^j \d_\phi^ k P_\tau(x, \phi)=  \phi^{1-k} \cO (||f||_{C^{6}}),$$
$$ i, j, k =0, 1: \quad \d_\tau^i \d_x^j \d_\phi^ k Q_\tau(x, \phi)=  \phi^{2-k} \cO (||f||_{C^{6}}),$$
uniformly for $x \in \R$, $\phi \in [0, \frac{\pi}{2}]$ and $\tau \in [0, 1]$. When $\phi \in [\frac{\pi}{2}, \pi]$, we need to replace $\phi$ with $\pi - \phi$ in the above estimates. 
\end{lemm}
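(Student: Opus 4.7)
The plan is to parametrize $\partial\Omega_\tau$ by arc length as $\gamma_\tau(s)$, let $\kappa_\tau(s)$ be its curvature, and use the classical generating-function description of the billiard: if $\Psi_\tau(x,y)=|\gamma_\tau(x)-\gamma_\tau(y)|$, then $(y,\phi')=\hat{\beta}_\tau(x,\phi)$ is characterized by $\partial_x\Psi_\tau(x,y)=-\cos\phi$ and $\partial_y\Psi_\tau(x,y)=\cos\phi'$. From the polar expression $r(\tau,\theta)=1+\tau f(\theta)$ one obtains $\|\kappa_\tau-1\|_{C^n}=\cO_n(\|f\|_{C^{n+2}})$, so the smallness of $\|f\|_{C^2}$ keeps $\kappa_\tau>0$, makes $\Omega_\tau$ strictly convex, and renders $\hat{\beta}_\tau$ a smooth twist diffeomorphism; analyticity in $\tau$ is automatic from the linear dependence of $\gamma_\tau$ on $\tau$. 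With $F,G$ as in \eqref{FG}, the task reduces to extracting a factor of $\phi$ from $P_\tau$ and a factor of $\phi^2$ from $Q_\tau$ uniformly in $\phi\in[0,\pi/2]$, and bounding the resulting quotients and their first derivatives by $\|f\|_{C^6}$.

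Both factorizations are driven by vanishing of the map at the grazing boundary $\phi=0$. The identity $\hat{\beta}_\tau(x,0)=(x,0)$ is forced by continuity of the grazing limit, giving $P_\tau(x,0)=Q_\tau(x,0)=0$. The additional vanishing $\partial_\phi Q_\tau(x,0)=0$ needed to pull out a second factor of $\phi$ from $Q_\tau$ is the content of the classical Lazutkin small-angle expansion
\[ F_\tau(x,\phi)=\frac{2}{\kappa_\tau(x)}\,\phi+\cO(\phi^2), \qquad Q_\tau(x,\phi)=-\frac{2\kappa_\tau'(x)}{3\kappa_\tau(x)^2}\,\phi^2+\cO(\phi^3), \]
which one derives by Taylor-expanding the two implicit equations $\partial_x\Psi_\tau=-\cos\phi$ and $\partial_y\Psi_\tau=\cos\phi'$ along the diagonal $y=x$. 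Once these are in hand, the Taylor identities
\[ P_\tau(x,\phi)=\phi\int_0^1 \partial_\phi P_\tau(x,t\phi)\,dt, \qquad Q_\tau(x,\phi)=\phi^2\int_0^1 (1-t)\,\partial_\phi^2 Q_\tau(x,t\phi)\,dt \]
pull out the required $\phi$-prefactors uniformly for $\phi\in[0,\pi/2]$. The range $\phi\in[\pi/2,\pi]$ is handled by the time-reversal symmetry of the billiard, which interchanges $\phi\leftrightarrow\pi-\phi$ after relabelling the endpoints.

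For the size estimates, solving $\partial_x\Psi_\tau(x,y)=-\cos\phi$ for $y$ by the implicit function theorem expresses each $\partial_\tau^i\partial_x^j\partial_\phi^k P_\tau$ and $\partial_\tau^i\partial_x^j\partial_\phi^k Q_\tau$ with $i,j,k\le 1$ as a smooth universal function of $\kappa_\tau,\kappa_\tau',\kappa_\tau'',\kappa_\tau'''$ evaluated at $x$ and at $y(x,\phi)$, with $\kappa_\tau$ in the denominator. Combining with $\|\kappa_\tau-1\|_{C^3}=\cO(\|f\|_{C^5})$ and the one additional derivative cost absorbed by the implicit function step yields the stated $\cO(\|f\|_{C^6})$ bound.

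The main obstacle will be patching the two regimes $\phi\to 0$ and $\phi$ of order one into a single uniform estimate with the correct $\phi$-prefactors. Near $\phi=0$ the generating function is degenerate (the chord length vanishes), so one must work through the Lazutkin expansion; for $\phi$ bounded away from $0$ and $\pi$ the estimates are trivial, but the $\phi$-prefactors are irrelevant. The cleanest route is probably to pass to the Lazutkin coordinate $x\mapsto\int_0^x\kappa_\tau^{2/3}$, in which the billiard map becomes a genuine smooth perturbation of a translation, but a direct Taylor-remainder argument tracking the $\phi$-dependence carefully should also close the argument.
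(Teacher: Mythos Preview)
Your overall architecture matches the paper's: factor $P_\tau=\phi\cdot(\text{bounded})$ and $Q_\tau=\phi^2\cdot(\text{bounded})$ via Taylor integral remainders, using the vanishing at $\phi=0$ (and $\partial_\phi Q_\tau(x,0)=0$ from the Lazutkin small-angle expansion), then bound the integrands and their derivatives through an implicit-function argument. The paper also pulls out an extra factor of $\tau$ (writing $P_\tau=\tau\phi R_\tau$, $Q_\tau=\tau\phi^2 S_\tau$) and then only needs uniform $\cO(1)$ bounds on mixed derivatives $\partial_\tau^i\partial_x^j\partial_\phi^k P$ with $i\ge 1$, which automatically carry the $\|f\|_{C^6}$ factor since $\partial_\tau\kappa_\tau=\cO(\|f\|_{C^2})$.

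The genuine gap is the implicit-function step. You propose solving $\partial_x\Psi_\tau(x,y)=-\cos\phi$ for $y$, but $\partial_y\partial_x\Psi_\tau(x,y)$ vanishes as $y\to x$ (it behaves like $-\sin\phi\sin\phi'/\Psi_\tau\sim -\kappa_\tau\phi/2$), so the standard implicit function theorem degenerates exactly at $\phi=0$ and cannot by itself produce derivative bounds on $y(x,\phi)$ that are \emph{uniform} down to $\phi=0$. You correctly flag this in your last paragraph, but ``pass to Lazutkin coordinates'' or ``track the $\phi$-dependence carefully'' is not a proof. The paper's fix is precisely here: instead of the generating function, it uses Lazutkin's integral equation
\[
I(\tau,x,\phi,P)=\int_0^1 \sin\Big(\phi-\int_x^{x+t(2\phi+P)}\kappa_\tau\Big)\,dt=0,
\]
for which one computes $-\partial_P I=\frac{\sin\phi}{\phi}+\cO(\|f\|_{C^2})\ge \frac{1}{\pi}$ \emph{uniformly} on $\phi\in[0,\pi/2]$. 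This non-degenerate implicit equation is what allows uniform bounds on $\partial_\tau^i\partial_x^j\partial_\phi^k P$; the companion relation $Q=\int_x^{x+2\phi+P}\kappa_\tau-2\phi$ then transfers these to $Q$. Your sketch needs either this rescaled implicit equation or an equivalent desingularization to close.
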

\begin{rema}
We do not claim that $C^6$ is the optimal choice here. In fact, probably  $C^4$ is sufficient, however since $C^6$ is more than good enough for our main theorem, we do not attempt to optimize this lemma. 
\end{rema}
\begin{proof} 
	By symmetry, it is enough to prove the lemma for $\phi \in [0, \frac{\pi}{2}]$, hence we assume this throughout the proof. Let $\hat \beta$ be the lift of the billiard map of a strictly convex domain $\Omega$, with $\hat \beta(x, 0) =(x, 0)$. Also, let $\kappa$ be the curvature function of $\d \Omega$.   By Proposition 14.1 of \cite{L}, we know that if we define 
	$$(x_1, \phi_1) = \hat \beta (x, \phi),$$
	then 
	\begin{equation}\label{Lazutkin1}
	\int_x^{x_1} \sin \left (\phi - \int_x^{x'} \kappa(x'') dx'' \right) dx' =0,
	\end{equation} 
		\begin{equation}\label{Lazutkin2}
		\phi_1 = \int_x^{x_1} \kappa(x') dx' - \phi . 
	\end{equation}  
	Moreover, 
	\begin{equation}\label{Lazutkin3}
\frac{2}{\kappa_{\text{max}}} \phi	\leq x_1 -x  \leq  \frac{2}{\kappa_{\text{min}}} \phi,
	\end{equation}  
		\begin{equation}\label{Lazutkin4}
		\frac{1}{2\kappa_{\text{max}} / \kappa_{\text{min}} -1} \phi	\leq \phi_1  \leq  	\left (2\kappa_{\text{max}} / \kappa_{\text{min}} -1 \right )\phi	.
	\end{equation}  
	Now consider the the deformation $\d \Omega_\tau$, $\tau \in [0,1]$. By our notations,
	$$(x_1, \phi_1) = \hat \beta_\tau(x, \phi) = (x+2\phi + P_\tau(x, \phi), \phi + Q_\tau(x, \phi)). $$
	Since 
	\begin{equation} \label{kappa tau} \kappa_\tau = 1 + \cO ( ||f||_{C^2}), \end{equation} 
		by \eqref{Lazutkin3} we obtain
	\begin{equation} \label{P tau}
	P_\tau(x, \theta) = x_1 -x - 2 \phi = \phi \cO (||f||_{C^2}).
	\end{equation}
	Next as in \cite{L}, we study $P_\tau$ (sometimes we call $P$) as an implicit function defined, using \eqref{Lazutkin1}, by
	\begin{equation} \label{I}
	I(\tau, x, \phi, P) = \int_0^1 \sin \left (\phi - \int_x^{x+t(2\phi + P)} \kappa_\tau(x'') dx'' \right ) dt =0. 
	\end{equation}
	Let us compute $| \d_P I |$ and estimate it from below.  We have
	$$ - \d_P I = \int_0^1  \cos \left (\phi - \int_x^{x+t(2\phi + P)} \kappa_\tau(x'') dx'' \right ) \kappa_\tau \big(x + t(2\phi+P) \big )\,  t \, dt. $$ 
	By \eqref{kappa tau} and \eqref{P tau}, we get 
	\begin{align*} - \d_P I & =  \int_0^1  \cos \left (\phi - \int_x^{x+2t \phi} \, dx'' \right )  t dt + \cO (||f||_{C^2}) \\
	 &=  \int_0^1  \cos (\phi(1-2t))\,  t \, dt + \cO (||f||_{C^2})
	 \\ & = \frac{\sin \phi}{\phi} +  \cO (||f||_{C^2}).
	 \end{align*}
	 Since for $\phi \in [0, \frac{\pi}{2}]$, we have $\frac{\sin \phi}{\phi} \geq \frac{2}{\pi}$, we obtain that for $||f||_{C^2}$ sufficiently small,  uniformly for $0 \leq \tau \leq 1$, $0 \leq \phi \leq \frac{\pi}{2}$, and $x \in \R$, we have 
	 \begin{equation} \label{lower bound dP} | \d_P I | \geq \frac{1}{\pi}. \end{equation}
	 Consequently, by the implicit function theorem there is a unique $P_\tau(x, \phi)$ satisfying \eqref{I} and it is differentiable in $\tau$, $x$ and $\phi$. Moreover, 
	\begin{equation}\label{Implicit derivatives} \begin{cases} \d_\tau P_\tau = -\frac{\d_\tau I }{\d_P I },\\ \\ \d_x P_\tau= - \frac{\d_x I}{\d_P I }, \\ \\
	\d_\phi P_\tau= -\frac{\d_\phi I}{\d_P I}.\end{cases} \end{equation}
	On the other hand since $P_0(x, \phi)=0$, we have $P_\tau(x, \phi) = \tau \tilde P_\tau (x, \phi)$. But since $P_\tau(x, 0) = 0$, we have $\tilde P _\tau (x, 0) = 0$, and so
	$$P_\tau(x, \phi) = \tau \phi R_\tau (x, \phi).$$
	In fact 
	$$R_\tau(x, \phi) = \int_0^1\int_0^1 (\d_\tau \d_\phi P) (u\tau, x, v \phi) du dv. $$
	Here, to ease the notation for the integrand we have set  $P(\tau, x, \phi):= P_\tau(x, \phi)$. 
	
	Similarly for $Q_\tau$ we know that $Q_0(x, \phi) =0$, so $Q_\tau (x, \phi) = \tau \tilde Q_\tau(x, \phi)$. It is known by the asymptotic expansion of the billiard map near $\phi=0$ (see for example page 145 of \cite{L}), that $$Q_\tau (x, 0) = \d_\phi Q_\tau (x, 0) = 0,$$ so we must have 
	$$Q_\tau(x, \phi) = \tau \phi^2 S_\tau (x, \phi). $$
	By the integral remainder formula of the Taylor's theorem, we have
	$$S_\tau(x, \phi) = \frac12 \int_0^1\int_0^1 (1-v) (\d_\tau \d^2_\phi Q)(u\tau, x, v \phi) du dv. $$ 
	Again, here for convenience we have denoted $Q(\tau, x, \phi):= Q_\tau(x, \phi)$. Thus, to prove the lemma it suffices to prove the estimates 
	\begin{equation} \label{P estimates}  \d^k_\phi \d^j_x \d^i_\tau P = \cO (||f||_{C^6}), \quad i=1, 2; \,  j= 0,1;\,  k=1, 2, 3, \end{equation} 
	and 
	\begin{equation} \label{Q estimates} \d^k_\phi  \d^j_x  \d^i_\tau Q = \cO (||f||_{C^6}), \quad i=1, 2;\,  j=0, 1; \,  k=2, 3. \end{equation}
	We first show the estimates for $P$.  An important observation is that for $n \leq 4$, 
	$$ \d_x^n \d_\tau \kappa_\tau (x) = \cO (||f||_{C^{n+2}}). $$
	One then immediately sees by \eqref{I} that 
	$$ \d_\tau^{i} I = \cO(||f||_{C^2}), \quad i=1, 2.$$
	Furthermore,  by taking derivatives with respect to $x$, $\phi$, and $P$, we get
	$$\d_P^m \d_\phi^k \d_x^j\d_\tau^{i} I = \cO(||f||_{C^6}), \quad i=1, 2; j+k+m \leq 5$$
	The estimates \eqref{P estimates} can be concluded, by differentiating the first equation of \eqref{Implicit derivatives}, then using the lower bound \eqref{lower bound dP}, and the above estimates for the derivatives of $I$. 
  
	The estimates for $Q$ follow from the ones for $P$, and the relation
	$$ Q(\tau, x, \phi) = \int_x^{x+2\phi + P(\tau, x, \phi)} \kappa_\tau(x') dx' -2 \phi,$$
	which is obtained from \eqref{Lazutkin2}. 
	 \end{proof}
	
	Equipped with Lemma \ref{perturbed beta},  we are in position to start the proof of Theorems \ref{loop angle} and \ref{q path}. Let $\hat \beta_\tau$ be the lift of the billiard map of $\d \Omega_\tau = \d E_0 + \tau f N_0$ and $\text{Proj}_1$ be the projection onto the $x$ component of $\Pi = \R \times [0, \pi]$.  Our strategy is to show that 
	$$x_q(\tau, x, \phi) := \text{Proj}_1 {\hat \beta_\tau}^q (x, \phi)$$ is strictly increasing as a function of $\phi$ on the interval $[0, C / q]$. More precisely,
	\begin{theo}\label{x_q increasing} For any $C>0$, there exists $\ep_0$ such that for all perturbations $\partial \Omega_\tau = \partial D + \tau f N_0$ of the unit disk $D$ with $\| f \| _{C^6} \leq \ep_0$, for all $q \geq 2$, $ 0\leq \tau \leq 1$,  $x \in \R$,  and $\phi \leq C /q$, we have
		$$\d_\tau x_q(\tau, x, \phi) = q  \phi C^2e^{2C} \cO(\|f\|_{C^6}) ,$$
		$$\d_\phi x_q(\tau, x, \phi) = 2q + q C^2e^{2C} \cO(\|f\|_{C^6}),$$
		and
	$$x_q(\tau, x, \phi) = x+2q \phi + q  \phi C^2e^{2C} \cO(\|f\|_{C^6}).$$ \end{theo}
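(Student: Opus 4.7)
The plan is to track the orbit $(x_j,\phi_j):=\hat\beta_\tau^{\,j}(x,\phi)$ for $0\le j\le q$ using the recursion supplied by Lemma \ref{perturbed beta},
\begin{align*}
x_{j+1}&=x_j+2\phi_j+P_\tau(x_j,\phi_j),\\
\phi_{j+1}&=\phi_j+Q_\tau(x_j,\phi_j),
\end{align*}
with $(x_0,\phi_0)=(x,\phi)$. All three conclusions will be deduced from a single bootstrap/induction argument in which one first controls the $\phi_j$'s, then sums to get $x_q$, and finally differentiates the recursion in $\phi$ and $\tau$ and applies a discrete Gronwall estimate.

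\textbf{Stability of the angles.} First I would show that $\phi_j\le 2\phi$ for all $0\le j\le q$. Bootstrapping on this bound together with the estimate $Q_\tau=\phi^2\mathcal O(\|f\|_{C^6})$ from Lemma \ref{perturbed beta}, the telescoping sum gives
\[
|\phi_j-\phi|\le \sum_{k=0}^{j-1}|Q_\tau(x_k,\phi_k)|\le 4q\phi^2\,\mathcal O(\|f\|_{C^6})\le 4\phi\,C\,\mathcal O(\|f\|_{C^6}),
\]
using $q\phi\le C$. Choosing $\epsilon_0$ small enough (depending on $C$) makes this less than $\phi$, closing the bootstrap. Inserting this into the $x$-recursion and using $P_\tau=\phi\mathcal O(\|f\|_{C^6})$ yields
\[
x_q-x-2q\phi=2\sum_{j=0}^{q-1}(\phi_j-\phi)+\sum_{j=0}^{q-1}P_\tau(x_j,\phi_j)=q\phi\,C^2\,\mathcal O(\|f\|_{C^6}),
\]
which is the third estimate (up to absorbing a constant into the $\mathcal O$).

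\textbf{Derivative estimates via discrete Gronwall.} For the derivatives, I would differentiate the recursion and set $A_j=\partial_\phi x_j$, $B_j=\partial_\phi \phi_j$ (and analogously $\widetilde A_j,\widetilde B_j$ for $\partial_\tau$). Using Lemma \ref{perturbed beta} to bound the partials of $P_\tau,Q_\tau$ (schematically $\partial_x P=\phi\,\mathcal O$, $\partial_\phi P=\mathcal O$, $\partial_x Q=\phi^2\,\mathcal O$, $\partial_\phi Q=\phi\,\mathcal O$, each with factor $\|f\|_{C^6}$), the linear system for $(A_j,B_j)$ is a near-identity perturbation of the unit disk linearization $(A_{j+1},B_{j+1})=(A_j+2B_j,B_j)$. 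A discrete Gronwall argument on $|A_j|+qC|B_j-1|$ (or any comparable quantity adapted to the scales) gives
\[
|B_j-1|\lesssim C^2 e^{2C}\|f\|_{C^6},\qquad |A_j-2j|\lesssim j\,C^2 e^{2C}\|f\|_{C^6}.
\]
The factor $e^{2C}$ arises, as usual, from $(1+C/q)^{2q}\sim e^{2C}$ in the Gronwall product. For the $\tau$-derivative, the initial condition $(\widetilde A_0,\widetilde B_0)=(0,0)$ replaces the main term $2j$ by an inhomogeneous source coming from $\partial_\tau P_\tau=\phi\,\mathcal O$ and $\partial_\tau Q_\tau=\phi^2\,\mathcal O$, producing the additional factor $\phi$ and giving $\widetilde A_q=q\phi C^2e^{2C}\mathcal O(\|f\|_{C^6})$.

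\textbf{Main obstacle.} The chief difficulty is not local — each step is obviously a small perturbation — but lies in showing that the error terms do not amplify over $q$ iterations as $q\to\infty$. The key mechanism is that the linearized step is $\begin{pmatrix}1&2\\0&1\end{pmatrix}$ and the perturbations carry an extra power of $\phi\le C/q$, so that summed over $q$ steps the error stays bounded by a fixed function of $C$. Rigorously extracting this from the coupled $(A,B)$ system requires choosing the Gronwall norm with the right balance between $A$ and $B$; this is what generates the $C^2 e^{2C}$ prefactor in the statement. Once the bootstrap on $\phi_j$ and the Gronwall step are set up correctly, the three estimates of Theorem \ref{x_q increasing} follow by a single uniform argument valid for all $q\ge 2$.
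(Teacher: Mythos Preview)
Your proposal is correct and rests on the same ingredients as the paper --- Lemma \ref{perturbed beta} for the size of $P_\tau,Q_\tau$ and their derivatives, together with the angle stability (which in the paper is Lemma \ref{angle of iterations}, and which you reprove as your bootstrap step). The organization, however, is genuinely different.

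The paper never iterates the recursion for $(\partial_\phi x_j,\partial_\phi\phi_j)$ directly. Instead it computes the \emph{mixed} second derivative $\partial_\tau\partial_\phi x_q$ in one stroke via the product rule
\[
\partial_\tau\partial_\phi\,\text{Proj}_1\hat\beta_\tau^q
=\begin{bmatrix}1\\0\end{bmatrix}^T\sum_{j=0}^{q-1}D\hat\beta_\tau^{\,j}\circ(D\partial_\tau\hat\beta_\tau)\circ D\hat\beta_\tau^{\,q-j-1}\begin{bmatrix}0\\1\end{bmatrix},
\]
and then bounds the matrix powers $D\hat\beta_\tau^{\,j}$ by factoring out the diagonal and expanding $(I+B)^j$ binomially, exploiting that $B^2$ is diagonal with entries $\frac{C^2}{q^2}\mathcal O(\|f\|_{C^6})$. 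This yields an explicit entrywise bound $|D\hat\beta_\tau^{\,j}|\le(1+C/q)^{2j}\left[\begin{smallmatrix}1&10j\\ C/q&1\end{smallmatrix}\right]$, from which the $C^2e^{2C}$ emerges transparently. The remaining two estimates are obtained by integrating in $\phi$ and $\tau$, rather than by a separate telescoping argument as you do.

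Your route --- subtract the unperturbed solution $(2j,1)$ and Gronwall the residual with a weighted norm --- is equally valid; the weighting (e.g.\ conjugating by $\mathrm{diag}(q,1)$, or your $|A_j|+q|B_j-1|$ after subtracting the main term) is exactly what is needed to tame the Jordan block $\left[\begin{smallmatrix}1&2\\0&1\end{smallmatrix}\right]$ and prevent spurious exponential growth in $q$. The paper's binomial trick and your weighted Gronwall are two packagings of the same cancellation. What the paper's version buys is an explicit closed-form matrix bound (useful again in Lemma \ref{vartheta derivative}); what yours buys is a more modular argument that avoids the somewhat ad hoc binomial manipulation.
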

	
Note that the last statement follows by integrating the the second statement and the fact $x_q(\tau, x, 0) = x $. Before proving this theorem, we need to state and prove a lemma and its corollary. 
	
		\begin{lemm}\label{angle of iterations}
		Let $C>0$ and $\delta >0$. Suppose $\phi \leq C/q$. Then for $\|f\|_{C^6}$ sufficiently small in terms of $C$ and $\delta$, we have that for all $x \in \R$ and $\tau \in [0,1]$, all angles of reflections of the orbit $\{ \hat \beta_\tau^ j (x, \phi) \}_{j=0}^{q}$ are bounded above by $C(1+\delta)/q$. Moreover, if $\phi \geq C/q$, then all angles of reflections are bounded from below by $\frac{C}{(1+\delta)q}$. 
	\end{lemm}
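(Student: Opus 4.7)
The plan is to use a bootstrap (continuity) argument built on the angle recursion provided by Lemma~\ref{perturbed beta}. Writing $(x_j,\phi_j):=\hat\beta_\tau^j(x,\phi)$, that lemma yields
\[
\phi_{j+1}=\phi_j+Q_\tau(x_j,\phi_j),\qquad |Q_\tau(y,\psi)|\leq K\psi^2\|f\|_{C^6}
\]
for some absolute constant $K$ (absorbing the implicit constant in the $\cO$-notation), valid for $\psi\in[0,\pi/2]$; the $\psi\leftrightarrow\pi-\psi$ symmetry recorded in Lemma~\ref{perturbed beta} reduces the range $[\pi/2,\pi]$ to this one. The decisive feature is the quadratic vanishing of $Q_\tau$ at $\psi=0$: so long as the iterates $\phi_i$ remain of order $1/q$, each step contributes an error of size $\|f\|_{C^6}/q^2$, and summing over $q$ steps produces only a cumulative drift of order $\|f\|_{C^6}/q$—exactly the scale of the target window $C\delta/q$.

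For the upper bound, assuming $\phi_0=\phi\leq C/q$, I would let $J$ be the largest $j\leq q$ for which $\phi_i\leq C(1+\delta)/q$ holds for every $i\leq j$ (so $J\geq 0$). Telescoping gives
\[
|\phi_{J+1}-\phi_0|\leq\sum_{i=0}^{J}|Q_\tau(x_i,\phi_i)|\leq q\cdot K\bigl(C(1+\delta)/q\bigr)^2\|f\|_{C^6}=KC^2(1+\delta)^2\|f\|_{C^6}/q,
\]
so $\phi_{J+1}\leq C/q+KC^2(1+\delta)^2\|f\|_{C^6}/q\leq C(1+\delta)/q$ as soon as $\|f\|_{C^6}\leq\delta/(KC(1+\delta)^2)$. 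This contradicts maximality of $J$ unless $J=q$. For the lower bound, under $\phi_0\geq C/q$, one runs the analogous bootstrap on the hypothesis $\phi_i\geq C/((1+\delta)q)$, using an upper-bound bootstrap in parallel (with a slightly enlarged constant) to keep $\phi_i$ under control inside the quadratic factor $\phi_i^2$ of $Q_\tau$; the same telescoping then bounds the downward drift by $C\delta/((1+\delta)q)$ for $\|f\|_{C^6}$ small in terms of $C$ and $\delta$.

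The main obstacle is securing uniformity of the smallness threshold on $\|f\|_{C^6}$ with respect to $q$. A cruder Lipschitz-type bound $|Q_\tau|\lesssim\psi\|f\|_{C^6}$ would yield a cumulative drift of order $\|f\|_{C^6}$ over $q$ steps, forcing $\|f\|_{C^6}=o(1/q)$—a $q$-dependent threshold that would be useless in the sequel. It is precisely the additional vanishing $\partial_\phi Q_\tau(x,0)=0$ recorded in Lemma~\ref{perturbed beta} that upgrades the estimate to the quadratic $|Q_\tau|\leq K\psi^2\|f\|_{C^6}$ and delivers the uniform-in-$q$ smallness condition required here.
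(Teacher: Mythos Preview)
Your upper-bound argument is correct and is essentially the paper's proof recast: the paper runs an explicit induction $\phi_j\leq (C/q)(1+A/q)^j$ with $e^A=1+\delta$, while you phrase the same Gronwall-type control as a bootstrap with a telescoping sum. Both rest on the quadratic vanishing $|Q_\tau|\leq K\psi^2\|f\|_{C^6}$ from Lemma~\ref{perturbed beta}, and you are right to emphasise that this (rather than the weaker linear bound) is what makes the smallness threshold uniform in $q$.

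Your lower-bound sketch, however, has a gap. You propose to run ``an upper-bound bootstrap in parallel (with a slightly enlarged constant)'' to control $\phi_i^2$ in the quadratic term. But the second statement of the lemma assumes only $\phi_0\geq C/q$; no upper bound on $\phi_0$ is given, so your parallel bootstrap has nothing to start from. Without it, the best you can extract from $\phi_{j+1}\geq\phi_j(1-K\phi_j\|f\|_{C^6})$ is $\phi_j\geq\phi_0(1-K(\pi/2)\|f\|_{C^6})^j$, which after $q$ steps forces a $q$-dependent smallness condition on $\|f\|_{C^6}$---exactly the defect you correctly flagged for the linear estimate.

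The paper's one-line fix (``the second statement follows from the first'') is a reversibility argument: the inverse map $\hat\beta_\tau^{-1}$ has the same structure as $\hat\beta_\tau$ (same quadratic estimate on the angle increment), so the first part applies to it verbatim. If some $\phi_{j^*}<C/((1+\delta)q)=:C'/q$, apply the upper-bound statement to the backward orbit starting at $(x_{j^*},\phi_{j^*})$ with constant $C'$: every $\phi_{j^*-k}$ is then strictly below $C'(1+\delta)/q=C/q$, contradicting $\phi_0\geq C/q$. This avoids any need for an a~priori upper bound on $\phi_0$.
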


		\begin{proof} Let $\phi_j$ be the $j$-th angle of reflection. We shall prove by induction that for all $0 \leq j \leq q$, we have 
			\begin{equation} \label{phi upper bound} \phi_j \leq \frac{C}{q} \left (1 + \frac{A}{q} \right)^{j},  \end{equation}
			where $A>0$ is chosen such that $e^A = 1+ \delta$. 
			Once proved, this estimate would imply the lemma immediately because 
			$$\frac{C}{q} \left (1 + \frac{A}{q} \right)^{q} \leq \frac{C}{q} e^{A} \leq   \frac{C(1+\delta)}{q}.$$ 
			 The estimate \eqref{phi upper bound} is obviously true for $j=0$ by assumption of the lemma. Assume it is true for some $j \geq 0$. Then using Lemma \ref{perturbed beta}, we have
			\begin{align*}\phi_{j+1} &= \phi_j \left (1 + \phi_j \cO (\|f\|_{C^6}) \right). 
			\end{align*}
			Therefore, the proof is concluded if we choose $\|f\|_{C^6}$ small enough (in terms of $C$ and $\delta$) so that $$\phi_j \cO (\|f\|_{C^6}) \leq \frac{C(1+\delta)}{q} \cO (\|f\|_{C^6}) \leq \frac{A}{q}. $$
			The second statement follows from the first.
			\end{proof}

		As a corollary we have:
\begin{coro} \label{bound on loop angle} For $\| f \|_{C^6}$ is sufficiently small, all angles of reflections of a $q$-reflected path in $\Omega_\tau$ from $x$ to $x'$ with $|x-x'| \leq \frac{\ell_\tau}{100}$, are less than $\frac{3\pi}{2q}$. In particular all angles of reflections of $q$- loops of winding number one are bounded by $\frac{3\pi}{2q}$. 
		\end{coro}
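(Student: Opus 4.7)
The plan is to first establish an a priori bound $\alpha_q(\tau,x,x')\leq C_0/q$ for the initial angle of the $q$-reflected path, and then invoke Lemma~\ref{angle of iterations} to spread that control to every subsequent reflection.

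Fix the absolute constants $C_0=11\pi/10$ and $\delta=1/10$, so that $C_0(1+\delta)=121\pi/100<3\pi/2$. By Theorem~\ref{q path}, the angle $\alpha_q(\tau,x,x')$ is the unique element of $(0,\pi)$ for which, in the lift, $x_q(\tau,x,\alpha_q)=x'+\ell_\tau$. Using $|x-x'|\leq \ell_\tau/100$ together with $\ell_\tau=2\pi+\mathcal{O}(\|f\|_{C^6})$, for $\|f\|_{C^6}$ sufficiently small one has
$$(x'-x)+\ell_\tau\leq\tfrac{101}{100}\ell_\tau<\tfrac{22\pi}{10}=2C_0.$$
Theorem~\ref{x_q increasing} applied at $\phi=C_0/q$ gives
$$x_q(\tau,x,C_0/q)-x=2C_0+C_0^{3}e^{2C_0}\mathcal{O}(\|f\|_{C^6}),$$
which, after possibly shrinking $\|f\|_{C^6}$ further, still exceeds $(x'-x)+\ell_\tau$. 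Since $\phi\mapsto x_q(\tau,x,\phi)$ is continuous and strictly increasing on $[0,C_0/q]$ (the estimate in Theorem~\ref{x_q increasing} gives $\partial_\phi x_q=2q+\mathcal{O}(1)>0$ there), the intermediate value theorem produces some $\phi^*\in(0,C_0/q)$ with $x_q(\tau,x,\phi^*)=x'+\ell_\tau$. Uniqueness in Theorem~\ref{q path} then forces $\phi^*=\alpha_q(\tau,x,x')$, so $\alpha_q(\tau,x,x')\leq C_0/q$.

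With this a priori bound, Lemma~\ref{angle of iterations} applied with the chosen constants $C_0$ and $\delta$ yields, for $\|f\|_{C^6}$ small enough, that every reflection angle $\phi_j$ of the orbit $\{\hat\beta_\tau^j(x,\alpha_q)\}_{j=0}^{q}$ obeys
$$\phi_j\leq\frac{C_0(1+\delta)}{q}=\frac{121\pi}{100\,q}<\frac{3\pi}{2q},$$
which is the claimed bound. The second assertion, for $q$-loops of winding number one, is the diagonal case $x=x'$, which trivially satisfies $|x-x'|\leq\ell_\tau/100$.

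The only real work is bookkeeping: the several smallness requirements on $\|f\|_{C^6}$ (making $\ell_\tau$ close to $2\pi$, swallowing the remainder $C_0^{3}e^{2C_0}\mathcal{O}(\|f\|_{C^6})$ to preserve the gap $2C_0-\tfrac{101}{100}\ell_\tau$, and activating Lemma~\ref{angle of iterations} with $(C_0,\delta)$) depend only on the absolute constants $C_0$ and $\delta$ once fixed, so they can be imposed simultaneously and uniformly in $q\geq 2$ and $\tau\in[0,1]$. Thus no further obstacle arises beyond careful choice of the margin $\delta$.
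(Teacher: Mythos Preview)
Your argument has a circularity problem. You invoke Theorem~\ref{q path} to assert that $\alpha_q(\tau,x,x')$ is the \emph{unique} angle in $(0,\pi)$ producing a $q$-reflected path from $x$ to $x'$, and then use this uniqueness to identify your IVT solution $\phi^*$ with $\alpha_q$. But in the paper's logical structure, Corollary~\ref{bound on loop angle} is precisely what is used to \emph{finish} the proof of Theorem~\ref{q path}: Theorem~\ref{x_q increasing} gives monotonicity of $\phi\mapsto x_q(\tau,x,\phi)$ only on the limited range $[0,C/q]$, so it yields uniqueness only on that interval; to upgrade to uniqueness on all of $(0,\pi)$ one must know that \emph{any} $q$-reflected path has initial angle at most $3\pi/(2q)$, which is exactly the content of the present Corollary. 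So Theorem~\ref{q path} is not available to you here.

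Stripped of the appeal to uniqueness, what your argument actually establishes is only that \emph{some} angle $\phi^*\leq C_0/q$ solves $x_q(\tau,x,\phi^*)=x'+\ell_\tau$. The Corollary, however, concerns an arbitrary $q$-reflected path: one must show that \emph{every} $\alpha$ with $x_q(\tau,x,\alpha)=x'+\ell_\tau$ generates an orbit whose reflection angles are all below $3\pi/(2q)$. Your construction does not address a hypothetical second solution with large $\alpha$.

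The paper sidesteps this by arguing directly about any such path: since the total base displacement is $x'-x+\ell_\tau\leq\tfrac{101}{100}\ell_\tau$, pigeonhole gives some link with $x_{j^*+1}-x_{j^*}\leq\tfrac{2(1+\delta)^2\pi}{q}$; then \eqref{Lazutkin3} forces $\phi_{j^*}\leq\tfrac{(1+\delta)^3\pi}{q}$, and Lemma~\ref{angle of iterations} (applied forward and backward from $j^*$) spreads this to all $\phi_j\leq\tfrac{(1+\delta)^4\pi}{q}<\tfrac{3\pi}{2q}$ for $\delta=1/10$. This argument uses only Lemma~\ref{angle of iterations} and \eqref{Lazutkin3}, and in particular does not presuppose uniqueness.
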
 
	\begin{proof}
		By definition of a $q$-reflected path from $x$ to $x'$, we have $x_q (\tau, x, \alpha) =x' +\ell_\tau$. Let $\delta \geq \frac{1}{100}$ to be determined later. Since $\ell_\tau = 2 \pi+ \cO(\|f\|_{C^2})$, if we define 
		$$x_j (\tau, x, \phi) = \text{Proj}_1 {\hat \beta_\tau}^j (x, \phi),$$
		for small enough $\|f\|_{C^2}$ in terms of $\delta$, there must exist $ 0 \leq  j^* \leq q-1$ such that 
		$$x_{j^*+1} - x_{j^*} \leq \frac{x'-x + \ell_\tau}{q} \leq \frac{2(1+\delta)^2 \pi}{q}.$$ By \eqref{Lazutkin3}, for sufficiently small $\|f\|_{C^6}$ (in terms of $\delta$), we get $ \phi_{j^*} \leq \frac{(1+\delta)^3 \pi}{q}$.
	We then start with the point $(x_{j^*}, \phi_{j^*})$ in the phase space and apply the billiard map $\beta_\tau$, $q- j^*$ times, and apply its inverse $\beta_\tau^{-1}$, $j^*$ times. Using Lemma \ref{angle of iterations} we obtain $\phi_j \leq \frac{(1+\delta)^4 \pi}{q}$ which is less than $\frac{3\pi}{2q}$ for example for $\delta = \frac{1}{10}$. 
	\end{proof}

 \begin{proof}[\textbf{Proof of Theorem \ref{x_q increasing}}].  To obtain estimates on the $\phi$ and $\tau$ derivatives  of $x_q(\tau, x, \phi)$ we take its difference with the corresponding function for the unit disk and  differentiate with respect to $\phi$ and $\tau$ and denote it by $A(x, \phi)$, i.e. 
 $$A_q(\tau, x, \phi)= \d_\tau \d_ \phi \left ( \text{Proj}_1 \hat \beta_\tau^q (x, \phi) - \text{Proj}_1 \hat \beta_0^q (x, \phi)\right ). $$
 		 Recall that $ \hat \beta_\tau$ is the billiard map of $\Omega_\tau$ and $ \hat \beta_0$ is the billiard map of the unit disk $D=E_0$. Since 
 		$$\hat \beta_0^q (x, \phi) = (x + 2q\phi, \phi),$$ 
 		we have $$\d_\tau \d_\phi \text{Proj}_1 \hat \beta_0^q (x, \phi) = \d_\tau (2q) =0,$$
 		so in fact
 		$$A_q(\tau, x, \phi)= \d_\tau \d_ \phi \text{Proj}_1 \hat \beta_\tau^q (x, \phi).$$
 	We claim that that for all $\tau \in [0, 1]$, $x \in \R$, and $\phi \in [0, \frac{C}{q}]$,
 $$A_q (\tau, x, \phi) = C^2 e^{2C} q \cO (\| f\|_{C^6}).$$ 
To prove this, we write
\begin{align} \label{D beta expansion}\d_\tau \d_\phi  \text{Proj}_1 \hat \beta_\tau^q (x, \phi ) & = \d_\phi  \text{Proj}_1 \sum_{j=0}^{q-1} {\hat \beta}_\tau^j  \circ ( \d_\tau {\hat \beta}_\tau) \circ {\hat \beta}_\tau ^{q-j-1}(x, \phi) \\
&= \begin{bmatrix} 1 \\ 0 \end{bmatrix}^T \; \sum_{j=0}^{q-1}  D {\hat \beta}_\tau^j  \circ ( D \d_\tau {\hat \beta}_\tau) \circ D {\hat \beta}_\tau ^{q-j-1}(x, \phi) \begin{bmatrix} 0 \\ 1 \end{bmatrix}. 
\end{align}
By Lemma \ref{angle of iterations}, all angles of iterations are bounded by $2C/q$, and by Lemma \ref{perturbed beta}, we have uniformly for all $\tau \in [0, 1]$, $\phi \in [0, \frac{2C}{q}]$, and $x \in \R$,
\begin{equation*} D \hat \beta_\tau (x, \phi)  = \left[ \begin{array}{cc} 1+ \frac{C}{q} \cO ( \|f \|_{C^6}) & 2+ \cO ( \|f \|_{C^6})  \\  \frac{C^2}{q^2} \cO ( \|f \|_{C^6})  & 1 + \frac{C}{q} \cO ( \|f \|_{C^6}) \end{array} \right], \end{equation*}
and 
\begin{equation}\label{d tau beta} D \d_\tau \hat \beta_\tau (x, \phi)  = \left[ \begin{array}{cc} \frac{C}{q} \cO ( \|f \|_{C^6}) & \cO ( \|f \|_{C^6})  \\  \frac{C^2}{q^2} \cO ( \|f \|_{C^6})  & \frac{C}{q} \cO ( \|f \|_{C^6}) \end{array} \right]. \end{equation}
We shall need to estimate the powers of the matrix  $D \hat \beta_\tau$. Breaking it into the diagonal and the off-diagonal part, and factoring the diagonal part,  we see that 
\begin{equation}\label{D beta} | D \hat \beta^j _\tau |  \leq  \left ( 1+ \frac{C}{q} \cO ( \|f \|_{C^6}) \right)^j  \left (I +  \left[ \begin{array}{cc} 0 & 4+ \cO ( \|f \|_{C^6})  \\  \frac{C^2}{q^2} \cO ( \|f \|_{C^6}) & 0  \end{array} \right] \right )^j. \end{equation}
Let us denote 
$$ B = \left[ \begin{array}{cc} 0 & 4+ \cO ( \|f \|_{C^6})  \\  \frac{C^2}{q^2} \cO ( \|f \|_{C^6}) & 0  \end{array} \right] .$$
We note that 
$$ B^2 =  \left[ \begin{array}{cc} \frac{C^2}{q^2} \cO ( \|f \|_{C^6}) & 0  \\  0 & \frac{C^2}{q^2} \cO ( \|f \|_{C^6})  \end{array} \right].$$
By the binomial expansion
\begin{align*} (I + B)^j & = \sum_ {m=0}^j { j \choose m} B^{m} = \sum_ {k=0}^{[j/2]} { j \choose 2k} B^{2k} 
+ \sum_ {k=0}^{[(j-1)/2]} { j \choose 2k+1} B^{2k+1}. \end{align*}
Thus 
$$ (I + B)^j \leq  \sum_ {k=0}^{[j/2]} { j \choose 2k}  \frac{C^{2k}}{q^{2k}} \cO ( \|f \|^k_{C^6})
 I + \sum_ {k=0}^{[(j-1)/2]} { j \choose 2k+1} \frac{C^{2k}}{q^{2k}} \cO ( \|f \|^k_{C^6})  B .$$
 We choose $ \|f \|_{C^6}$ small enough so that all $\cO ( \|f \|^k_{C^6})$ terms are bounded by one. To estimate the second sum we note that for any $a>0$:
 $$ \sum_{k=0}^{[(j-1)/2]}{ j \choose 2k+1} a^{2k} = \frac{(1 + a)^j - (1-a)^j}{2a} \leq j (1+a)^{j-1}. $$
 Hence, 
 $$ (I + B)^j \leq \left[ \begin{array}{cc} \left (1 + \frac{C}{q} \right)^j & 10 j \left (1 + \frac{C}{q} \right)^{j-1}  \\  \frac{C}{q} \left (1 + \frac{C}{q} \right)^j  &  \left (1 + \frac{C}{q} \right)^j  \end{array} \right] .$$
 Plugging this into \eqref{D beta}, we get 
 $$ | D \hat \beta^j _\tau | \leq \left(1 + \frac{C}{q} \right )^{2j} \left[ \begin{array}{cc} 1 & 10 j   \\  \frac{C}{q}  &  1  \end{array} \right] . $$ 
 Inserting this estimate into \eqref{D beta expansion} and using \eqref{d tau beta}, we arrive at
 \begin{align*} | \d_\tau \d_\phi & \text{Proj}_1 \hat \beta_\tau^q (x, \phi )|  \\ & \leq \left (1+ \frac{C}{q} \right )^{2q} \; \sum_{j=0}^{q-1} \begin{bmatrix} 1 \\ 0 \end{bmatrix}^T    \left[ \begin{array}{cc} 1 & 10 j   \\  \frac{C}{q}  &  1  \end{array} \right]  \left[ \begin{array}{cc} \frac{C}{q} & 1   \\  \frac{C^2}{q^2}  &  \frac{C}{q}  \end{array} \right]  \left[ \begin{array}{cc} 1 & 10 (q -j -1)   \\  \frac{C}{q}  &  1  \end{array} \right]      \begin{bmatrix} 0 \\ 1 \end{bmatrix} \cO( \| f \|_{C^6}) \\
 & \leq C^2 e^{2C} q \cO( \| f \|_{C^6}). \end{align*}
 The theorem follows by integrating this with respect to $\phi$ and $\tau$ separately, and the facts $x_q(\tau, x, 0) =x$ and $x_q(0, x, \phi) = x +2q$. 
  \end{proof}
For future reference we record that our estimates also show the following bounds for the $\phi$ derivative of $\vartheta_q(\tau, x, \phi) = \text{Proj}_2 \hat \beta_\tau^q (x, \phi)$, where $\text{Proj}_2$ is the projection onto the second component.  
\begin{lemm}\label{vartheta derivative} $\d_\phi \vartheta_q(\tau, x, \phi) = 1+ C^2 e^{2C} \cO( \| f \|_{C^6})$. 
\end{lemm}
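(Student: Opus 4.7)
The plan is to recycle the matrix bounds already established in the proof of Theorem \ref{x_q increasing}, extracting a different entry of the same product $D\hat\beta_\tau^q$. By the chain rule,
\begin{equation*}
\partial_\phi \vartheta_q(\tau,x,\phi) \;=\; \begin{bmatrix}0 & 1\end{bmatrix} D\hat\beta_\tau^q(x,\phi)\begin{bmatrix}0\\1\end{bmatrix} \;=\; \big(D\hat\beta_\tau^q\big)_{22}.
\end{equation*}
For the unit disk, $\hat\beta_0(x,\phi)=(x+2\phi,\phi)$, so $(D\hat\beta_0^q)_{22}=1$. The problem thus reduces to showing that the $(2,2)$ entry of the difference $D\hat\beta_\tau^q - D\hat\beta_0^q$ is $C^2 e^{2C}\mathcal{O}(\|f\|_{C^6})$.

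For this I will use the matrix telescoping identity
\begin{equation*}
\prod_{j=q-1}^{0} M_j \;-\; \prod_{j=q-1}^{0} M_j^0 \;=\; \sum_{k=0}^{q-1} \Big(\prod_{j>k} M_j\Big)(M_k - M_k^0)\Big(\prod_{j<k} M_j^0\Big),
\end{equation*}
where $M_j = D\hat\beta_\tau(\hat\beta_\tau^j(x,\phi))$ and $M_j^0 = D\hat\beta_0$. The hypothesis $\phi \leq C/q$ together with Lemma \ref{angle of iterations} ensures every reflection angle along the orbit is bounded by $2C/q$, so that the entry estimates for $D\hat\beta_\tau$ from the proof of Theorem \ref{x_q increasing} apply at each $\hat\beta_\tau^j(x,\phi)$. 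In particular each factor $M_k - M_k^0$ has entries of size
\begin{equation*}
\begin{pmatrix} \tfrac{C}{q}\mathcal{O}(\|f\|_{C^6}) & \mathcal{O}(\|f\|_{C^6}) \\ \tfrac{C^2}{q^2}\mathcal{O}(\|f\|_{C^6}) & \tfrac{C}{q}\mathcal{O}(\|f\|_{C^6}) \end{pmatrix},
\end{equation*}
the prefix $\prod_{j>k} M_j$ is bounded entrywise by $(1+C/q)^{2(q-k-1)} \begin{pmatrix} 1 & 10(q-k-1) \\ C/q & 1 \end{pmatrix}$ as computed in Theorem \ref{x_q increasing}, and the suffix collapses to $(D\hat\beta_0)^k=\bigl(\begin{smallmatrix}1 & 2k \\ 0 & 1\end{smallmatrix}\bigr)$.

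Extracting the $(2,2)$ entry of each summand gives a sum of four terms of the form $A_{2i}(M_k-M_k^0)_{ij}C'_{j2}$. Each is bounded by $(C/q)\,e^{2C}\mathcal{O}(\|f\|_{C^6})$ or by $k(C/q)^2\,e^{2C}\mathcal{O}(\|f\|_{C^6})$; summing over $k=0,\dots,q-1$ both yield $C^2 e^{2C}\mathcal{O}(\|f\|_{C^6})$, giving the claim.

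The only thing delicate here is the bookkeeping that forces the diagonal entries of the perturbation $M_k - M_k^0$ to carry a factor $C/q$ rather than just $\mathcal{O}(\|f\|_{C^6})$; this is exactly the content of the $\phi^{1-k}$ and $\phi^{2-k}$ gain in Lemma \ref{perturbed beta}, and it is what prevents a spurious factor of $q$ from appearing in the final estimate. Once that is in hand the estimate is completely parallel to the one already carried out for $\partial_\tau\partial_\phi x_q$ in the proof of Theorem \ref{x_q increasing}.
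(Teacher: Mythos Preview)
Your proof is correct and fleshes out what the paper only indicates in one line (``our estimates also show''). The paper's implicit route is to replace $\text{Proj}_1$ by $\text{Proj}_2$ in the computation of $\partial_\tau\partial_\phi\,\text{Proj}_1\hat\beta_\tau^q$ already carried out in the proof of Theorem \ref{x_q increasing}, obtain $\partial_\tau\partial_\phi\vartheta_q = C^2 e^{2C}\cO(\|f\|_{C^6})$ from the same matrix products, and then integrate in $\tau$ using $\partial_\phi\vartheta_q(0,x,\phi)=1$. Your approach is slightly different: instead of passing through the $\tau$-derivative and integrating back, you telescope $D\hat\beta_\tau^q - D\hat\beta_0^q$ directly as $\sum_k(\prod_{j>k}M_j)(M_k-M_k^0)(D\hat\beta_0)^k$. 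Both routes feed on the same entrywise bounds for $D\hat\beta_\tau$, $D\hat\beta_\tau^j$, and $D\hat\beta_\tau - D\hat\beta_0$ coming from Lemma \ref{perturbed beta} and Lemma \ref{angle of iterations}, and the final bookkeeping is identical. Your telescoping version has the minor advantage that the suffix $(D\hat\beta_0)^k=\bigl(\begin{smallmatrix}1&2k\\0&1\end{smallmatrix}\bigr)$ is exact rather than estimated, which makes the arithmetic marginally cleaner; the paper's route has the advantage of literally recycling a formula already on the page.
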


\subsubsection{\textbf{Concluding the proof of Theorems \ref{loop angle} and \ref{q path}}}

Theorem \ref{x_q increasing} shows that $x_q(\tau, x, \phi)$ is monotonically increasing in $\phi$ on $[0, 3\pi/2q]$ and $$x_q(\tau, x, \phi) = x+ 2q \phi + q \phi \cO (\|f \|_{C^6}).$$ Since $x_q(\tau, x, 0) =0$, and because for sufficiently small $\|f \|_{C^6}$, 
$$x_q (\tau, x, 3\pi/2q) = x + 3 \pi + \cO (\|f \|_{C^6}) > x' + \ell_\tau,$$
by the intermediate value theorem there must be a unique $\phi = \alpha_q(\tau, x, x') \leq 3\pi/2q$ such that 
$$ x_q (\tau, x, \alpha_q(\tau, x, x')) = x'+ \ell_\tau.$$
By the implicit function theorem, $\alpha_q(\tau, x, x')$ is smooth in $(\tau, x, x')$. This together with Corollary \ref{bound on loop angle} conclude the proof of Theorem \ref{q path}, thus also Theorem \ref{loop angle}.  
  
  \section{Length spectrum} 
  Let $\Omega$ be a smooth strictly convex domain. 
  For $1 \leq p \leq \frac{q}{2}$, we denote $\mathcal{L}_{p, q}(\Omega)$ to be the set of lengths of periodic orbits of type $(p, q)$, i.e. periodic orbits that make $q$ reflections and wind around the boundary of $\Omega$, $p$ times, in the counterclockwise direction.  
  The length spectrum of $\Omega$ is 
  $$ \mathcal{L}(\Omega) = \text{closure}  \bigcup_{ 1 \leq p\leq q/2} \mathcal{L}_{p, q}(\Omega) $$We also denote $T_{p, q}$ and $t_{p, q}$ to be the sup and inf of $\mathcal{L}_{p, q}(\Omega)$, respectively. Marvizi-Melrose \cite{MM} proved that for a fixed $p$, as $q \to \infty$, one has
  \begin{equation}\label{lengths difference} T_{p, q} - t_{p,q} = \cO (q^{- k}), \end{equation}
  and moreover, there are constants $c_{k, p}(\Omega)$ such that
  \begin{equation} \label{T asymptotic} T_{p, q} \sim p \, \ell + \sum_{k=1}^\infty c_{k, p}(\Omega) q^{-2k}, \end{equation}
  where $\ell$ is the perimeter of $\Omega$. Note that this in particular shows that natural integer multiples of $\ell$ belong to the length spectrum as they are limit points of closed geodesics. 
  
  The Mather function $\ss(\omega)$ is a strictly convex function on $[0, \frac12]$ (see for example \cite{MF, Sib04}) whose values at the rational numbers are given by 
  \begin{equation}\label{beta} \ss(p/q) = - \frac{1}{q}T_{p, q}. \end{equation}
 The following lemma will be useful for us. From now on we shall use $T_q = T_{1,q}$ and $t_q= t_{1,q}$. 
  \begin{lemm}\label{gaps decreasing} The sequence $\{T_{q}\}_{q \geq 2}$ is strictly increasing to $\ell$, and its gap sequence $$\{T_{q+1} - T_{q} \}_{q \geq 2},$$ is strictly decreasing. 
  	\end{lemm}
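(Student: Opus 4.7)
The plan is to translate everything into the Mather function $\ss$ via the identity $T_q = -q\,\ss(1/q)$ from (3.4) (with $p=1$) and then feed strict convexity of $\ss$ on $[0,1/2]$ into two short computations — one for the monotonicity of $\{T_q\}$ and one for the monotone decay of the gap sequence. The only preparatory ingredient is the boundary value $\ss(0)=0$: the asymptotic (3.3) with $p=1$ gives $T_q\to\ell$, so
\begin{equation*}
\ss(1/q)=-T_q/q\longrightarrow 0,
\end{equation*}
and continuity of $\ss$ on $[0,1/2]$ forces $\ss(0)=0$.

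For strict monotonicity I use the standard convex-function fact that if $\ss$ is strictly convex on $[0,1/2]$ with $\ss(0)=0$ then $\omega\mapsto \ss(\omega)/\omega$ is strictly increasing on $(0,1/2]$: for $0<\omega_1<\omega_2\le 1/2$, writing $\omega_1=(1-\omega_1/\omega_2)\cdot 0+(\omega_1/\omega_2)\,\omega_2$ and applying strict convexity together with $\ss(0)=0$ yields $\ss(\omega_1)<(\omega_1/\omega_2)\ss(\omega_2)$, i.e. $\ss(\omega_1)/\omega_1<\ss(\omega_2)/\omega_2$. Specializing to $\omega_1=1/(q+1)$, $\omega_2=1/q$ gives $-T_{q+1}<-T_q$, hence $T_q<T_{q+1}$. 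Combined with $T_q\to\ell$ this yields $T_q\nearrow\ell$.

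For the gap sequence I apply strict convexity directly at three consecutive points. The key algebraic identity is
\begin{equation*}
\frac{1}{q+1}=\frac{q+2}{2(q+1)}\cdot\frac{1}{q+2}+\frac{q}{2(q+1)}\cdot\frac{1}{q},
\end{equation*}
with positive weights summing to $1$. Strict convexity of $\ss$ therefore gives
\begin{equation*}
\ss\!\left(\tfrac{1}{q+1}\right)<\frac{q+2}{2(q+1)}\,\ss\!\left(\tfrac{1}{q+2}\right)+\frac{q}{2(q+1)}\,\ss\!\left(\tfrac{1}{q}\right),
\end{equation*}
and multiplying by $-2(q+1)$ flips the inequality into
\begin{equation*}
2T_{q+1}>T_{q+2}+T_q,
\end{equation*}
equivalently $T_{q+1}-T_q>T_{q+2}-T_{q+1}$, which is the claimed strict decrease of the gap sequence.

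There is no serious obstacle: the whole argument is a two-line application of the strict convexity of $\ss$ once the sign bookkeeping in $T_q=-q\,\ss(1/q)$ and the continuity value $\ss(0)=0$ (from (3.3)) are in place. The only point that merits attention is that, because of the minus sign in $T_q=-q\,\ss(1/q)$, strict convexity of $\ss$ corresponds to strict \emph{concavity} of the map $q\mapsto T_q$ in the appropriate discrete sense — which is exactly what the gap statement asserts.
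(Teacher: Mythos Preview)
Your proof is correct and follows essentially the same idea as the paper: both arguments reduce the lemma to the strict convexity of the Mather function $\ss$ via $T_q=-q\,\ss(1/q)$. The paper phrases the gap inequality through the monotonicity of secant slopes of $\ss$ at the three nodes $1/q,\,1/(q+1),\,1/(q+2)$, while you use the equivalent convex-combination inequality at the same nodes; these unwind to the same inequality $T_{q+2}+T_q<2T_{q+1}$. For the monotonicity of $\{T_q\}$ the paper leaves the step implicit, whereas you make it explicit via $\ss(0)=0$ and the increase of $\omega\mapsto\ss(\omega)/\omega$; an alternative that avoids invoking $\ss(0)=0$ is to note that once the gaps are strictly decreasing and $T_q\to\ell$ (from the Marvizi--Melrose asymptotic), the gaps must tend to $0$ and hence are all positive.
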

  \begin{proof} Since $\ss$ is strictly convex, the slopes must strictly increase on its graph, hence
  	$$ \frac{\ss \left(\frac{1}{q+1} \right ) - {\ss} \left(\frac{1}{q+2} \right )} {\frac{1}{q+1} - \frac{1}{q+2} } < \frac{\ss \left(\frac{1}{q} \right ) - \ss \left(\frac{1}{q+1} \right )} {\frac{1}{q} - \frac{1}{q+1} }.$$ The lemma follows quickly by \eqref{beta}. \end{proof}
  \begin{rema} One can try to prove this using the asymptotic \eqref{T asymptotic}, however this method would only prove the lemma for large $q$. \end{rema}
  		
  For our purposes we  will need the following rough, but quantitative, version of estimates  \eqref{lengths difference} and  \eqref{T asymptotic}. 
  \begin{lemm} \label{MM} Let $\d \Omega_\tau = \d E_0 +  \tau f N_0$ be a nearly circular deformation in $C^6$. Assume $\|f\|_{C^8} \leq 1$ and $\|f\|_{C^2}$ is sufficiently small so that $\kappa_\tau = 1+ \cO(\|f\|_{C^2}) \geq \frac12$. Then uniformly for $\tau \in [0,1]$ we have:
  	 \begin{equation}\label{lengths difference v2} T_{q} - t_{q} = q^{-3} \cO (\|f\|_{C^6}) + \cO(q^{-4}) , \end{equation}
  	 \begin{equation} \label{T asymptotic v2} T_{q}  = \ell_\tau - \frac{1}{4}\left (\int_{0}^{\ell_\tau} \kappa_\tau^{2/3}(s) \, ds\right)^3 q^{-2}  +q^{-3} \cO (\|f\|_{C^6}) + \cO(q^{-4}).
  	  \end{equation}
  	  Here, the constants in all $\cO$ remainders are universal. 
  	\end{lemm}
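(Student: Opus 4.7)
The plan is to reduce both claims of the lemma to a single pointwise asymptotic expansion of the $q$-loop function $L_q(\tau,s)$ in powers of $1/q$, and then to extract the stated $\|f\|_{C^6}$ dependence by pushing the iteration arguments of the preceding section one order further. By Lemma \ref{critical points}, the lengths of $(1,q)$-periodic orbits of $\Omega_\tau$ are precisely the critical values of $s\mapsto L_q(\tau,s)$, so $T_q$ and $t_q$ are the maximum and minimum of $L_q(\tau,\cdot)$ on its critical set. Consequently, both estimates of the lemma follow at once from an expansion of the form
\[
L_q(\tau,s) = \ell_\tau - \tfrac14\Bigl(\int_0^{\ell_\tau}\kappa_\tau^{2/3}(u)\,du\Bigr)^3 q^{-2} + q^{-3}\cO(\|f\|_{C^6}) + \cO(q^{-4}),
\]
in which the $s$-dependence lies entirely in the $q^{-3}\cO(\|f\|_{C^6})$ term and the constant in $\cO(q^{-4})$ is universal.

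To derive this expansion I would combine Lemma \ref{perturbed beta}, Corollary \ref{bound on loop angle} and Lemma \ref{angle of iterations} (applied with $C=3\pi/2$) to control every bounce angle of a $(1,q)$-loop uniformly by $\cO(q^{-1})$. Iterating $\hat\beta_\tau$ exactly as in the matrix product argument from the proof of Theorem \ref{x_q increasing}, but retaining one more order both in the curvature corrections from Lemma \ref{perturbed beta} and in the binomial expansion of the transfer matrices, yields explicit formulas for the bounce points $s_j(\tau,s)$ and inter-bounce separations $d_j=s_{j+1}-s_j$ with errors of size $q^{-3}\cO(\|f\|_{C^6}) + \cO(q^{-4})$. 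Substituting into
\[
L_q(\tau,s) = \sum_{j=0}^{q-1}\|\gamma(\tau,s_{j+1}) - \gamma(\tau,s_j)\| = \sum_{j=0}^{q-1}\Bigl(d_j - \tfrac{\kappa_\tau^2(s_j)}{24}d_j^3 + \cO(d_j^5)\Bigr),
\]
the first sum telescopes to $\ell_\tau$ (the loop traverses $\partial\Omega_\tau$ exactly once) and the cubic sum produces the $q^{-2}$ correction. Changing variables to Lazutkin's coordinate $\mu(\tau,s) = C_\tau^{-1}\int_0^s \kappa_\tau^{2/3}$ with $C_\tau = \int_0^{\ell_\tau}\kappa_\tau^{2/3}$, in which the bounce points are equi-spaced at separation $1/q$ modulo an error controlled by $\|f\|_{C^6}$, a Riemann-sum estimate produces the coefficient of $q^{-2}$ claimed in the statement and absorbs all $s$-dependence into the $q^{-3}\cO(\|f\|_{C^6})$ remainder.

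The hypothesis $\|f\|_{C^8}\leq 1$ (as opposed to the $C^6$ used in Lemma \ref{perturbed beta}) is consumed in differentiating $\hat\beta_\tau^q$ two orders further than Theorem \ref{x_q increasing} does, which is what the $q^{-3}$ and $q^{-4}$ bookkeeping requires, while the assumption $\kappa_\tau\geq \tfrac12$ prevents the $\kappa_\tau^{-2/3}$ denominators appearing in Lazutkin coordinates from blowing up. The main obstacle, and the heart of the argument, is to propagate the single-step $\cO(\|f\|_{C^6})$ estimates through $q$ compositions of $\hat\beta_\tau$ without accumulating an extra factor of $q$ in the error; the matrix product / binomial-expansion mechanism of Theorem \ref{x_q increasing} handles this at leading order, and the task here is to carry out the same bookkeeping at the next order and to extract the explicit constant from the Lazutkin Riemann sum. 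Once the expansion is established, the stated form of $T_q$ is immediate, and $T_q-t_q=q^{-3}\cO(\|f\|_{C^6})+\cO(q^{-4})$ follows since the $s$-independent terms of the expansion cancel in the difference.
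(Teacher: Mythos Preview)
Your strategy --- prove a pointwise expansion of $L_q(\tau,s)$ via Lazutkin coordinates and the chord-length formula --- is in the right spirit, but it diverges from the paper's proof in two technical respects that your write-up does not resolve.

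First, the paper does not iterate the billiard map directly. It works with a fixed $(1,q)$ \emph{periodic} orbit, quotes a sharp result from \cite[Appendix A]{DKW} giving the Lazutkin coordinates of the bounce points as $\xi_j=\xi_0+j/q+\alpha(j/q)/q^2+\cO(\|f\|_{C^6})/q^4$, and then applies the Euler--Maclaurin formula with $m=4$ to the periodic function $g(\xi)=\|\tilde\gamma(\xi+1/q+\cdots)-\tilde\gamma(\xi+\cdots)\|$. The Euler--Maclaurin step is essential: it is what converts $\frac{1}{q}\sum g(j/q)$ into $\int_0^1 g$ with an $\cO(q^{-4})$ error. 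A bare ``Riemann-sum estimate'' gives only $\cO(q^{-1})$; without Euler--Maclaurin (or an equivalent exploitation of periodicity) you will not reach the $\cO(q^{-4})$ remainder. Your proposal to push the matrix iteration of Theorem~\ref{x_q increasing} one order further is a reasonable alternative to quoting \cite{DKW}, but you still need a mechanism of Euler--Maclaurin type for the summation.

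Second, and more seriously, you have not explained why the $q^{-3}$ coefficient is $\cO(\|f\|_{C^6})$. In the paper, the Taylor/Euler--Maclaurin computation produces $T=\ell+a_2 q^{-2}+a_3 q^{-3}+\cO(\|f\|_{C^6})q^{-3}+\cO(q^{-4})$ with an a~priori nonzero $a_3$; it is then the Marvizi--Melrose parity result \eqref{T asymptotic} (only even powers of $q^{-1}$ occur) that forces $a_3=\cO(\|f\|_{C^6})$. Your sentence ``absorbs all $s$-dependence into the $q^{-3}\cO(\|f\|_{C^6})$ remainder'' addresses the $s$-dependent part but says nothing about the $s$-independent $q^{-3}$ contribution, which is the whole point. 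Without invoking \eqref{T asymptotic} or supplying an independent parity argument, this step is a genuine gap.

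A smaller remark: the paper proves the expansion only for lengths of periodic orbits (this suffices, since $T_q$ and $t_q$ are such lengths), whereas you aim for the expansion of $L_q(\tau,s)$ at \emph{every} $s$. That is a stronger claim, and the \cite{DKW} input the paper relies on is stated only for periodic orbits, so your route would require reproving that ingredient as well.
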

  \begin{proof} We provide a proof using ``Lazutkin coordinate" and the Euler-Maclaurin formula.  Note that this is a quantitative version of a result in \cite{MM} which was not obtained in this reference. 
  
It is sufficient to prove this lemma for $\tau=1$. We shall use $\Omega$ for $\Omega_1$, $\kappa$ for $\kappa_1$, and $\ell$ for $\ell_1$.  We first recall the Lazutkin coordinate which is a diffeomorphism from $\mathbb R / \ell \mathbb Z$ to  $\mathbb R / \mathbb Z$ defined by
\begin{equation} \label{Laz}  \xi = \frac{\int_0^s \kappa^{2/3}(s') ds'}{\int_0^\ell \kappa^{2/3}(s') ds'}. \end{equation} 
Here we have used $\xi$ instead of $x$, the later being the standard notation for the Lazutkin coordinate, to avoid confusion with our $x$ used for the lift of $s$ to $\mathbb R$. The periodic orbits of type $(1, q)$ in $\Omega$ have a rather nice description in the Lazutkin coordinate. To present this feature, let $\{ (s_j, \phi_j)\}_{j=0}^{q-1}$ be any such periodic orbit and let $\{\xi_j \}_{j=0}^{q-1}$ correspond to $\{s_j\}_{j=0}^{q-1}$ in Lazutkin coordinate. From Appendix A of \cite{DKW}, one can show that there exists a $1$-periodic smooth function $\alpha$ defined only in terms of $\kappa$, with $\|\alpha \|_{C^m} = \cO ( \| (1/\kappa)' \|_{C^{m+1}})$, such that
\begin{equation} \label{xi} 
\xi_j = \xi_0 + \frac{j}{q} + \frac{\alpha \left (\xi_0+ {j}/{q} \right )} {q^2} +  \frac{\cO(\| (1/\kappa)' \|_{C^3} )} {q^4}, \qquad  1 \leq j \leq q-1. 
\end{equation} 
We note that by our lower bound assumption $\kappa \geq \frac12$, we can replace the term $ \cO(\| (1/\kappa)' \|_{C^3})$ in the remainder by $\cO(\| \kappa' \|_{C^3}) = \cO(\|f \|_{C^6})$.  We shall use \eqref{xi} to find an asymptotic for the length $T$ of the orbit $\{s_j\}_{j=0}^{q-1}$. Let $s(\xi)$ be the inverse function of $\xi = \xi(s)$ defined by \eqref{Laz} and let $\tilde \gamma (\xi)= \gamma ( s (\xi))$.  We write 
$$ T= \sum_{j=0} ^{q-1} \| \gamma(s_{j+1}) - \gamma(s_j) \| =  \sum_{j=0} ^{q-1} \| \tilde  \gamma(\xi_{j+1}) - \tilde \gamma(\xi_j) \|. $$ 
Inserting \eqref{xi} and using the mean value theorem,
$$ T=   \sum_{j=0} ^{q-1} \left \| \tilde  \gamma \left (\xi_0 + \frac{j+1}{q} + \frac{\alpha \left (\xi_0+{(j+1)}/{q} \right )} {q^2} \right) - \tilde \gamma \left (\xi_0 + \frac{j}{q} + \frac{\alpha \left (\xi_0+{j}/{q} \right )} {q^2} \right) \right \| +   \frac{\cO(\|f \|_{C^6} )} {q^3}. $$
For the sum we shall use the Euler-Maclaurin formula which asserts that if $g(\xi)\in C^\infty[0,1]$ and $g^{(k)}(0 )= g^{(k)}(1)$ for all $ k \geq 0$, then for all $m \geq 1$
$$ \frac{1}{q} \sum_{j=0}^{q-1} g (j/q) = \int_0^1 g(\xi) d \xi + R_m(g),$$
with $$ | R_m(g) | \leq \frac{2\zeta(m)}{(2\pi q)^{m}} \int_0^1 | g^{(m)}(\xi) | d\xi. $$
In our situation, we have
$$g(\xi) = \left \| \tilde  \gamma \left (\xi_0 + \xi + \frac{1}{q} + \frac{\alpha (\xi_0+  \xi + 1/q)} {q^2} \right) - \tilde \gamma \left (\xi_0 + \xi + \frac{\alpha (\xi_0+\xi)}{q^2} \right) \right \|, $$
which is a smooth $1$-periodic function on $[0, 1]$. Thus if we choose $m=4$ we obtain
$$ T= q \int_{0}^1 \left \| \tilde  \gamma \left (\xi + \frac{1}{q} + \frac{\alpha ( \xi + 1/q)} {q^2} \right) - \tilde \gamma \left (\xi + \frac{\alpha (\xi)}{q^2} \right) \right \| d\xi + \frac{\cO(1+\|f\|_{C^8})}{q^4} +  \frac{\cO(\|f \|_{C^6} )} {q^3}. $$ 
Taylor expanding the integrand we arrive at
$$ T = a_0 + \frac{a_1}{q} + \frac{a_2}{q^2} + \frac{a_3}{q^3} + \frac{\cO(\|f\|_{C^6})}{q^3} +  \frac{\cO(1+\|f\|_{C^8})}{q^4}.$$
It is clear that $a_0 = \int_0^1 \| \tilde \gamma ' (\xi) \| d \xi = \ell$. That $a_1 =0$ and $a_2 =- \frac{1}{4}\left (\int  \kappa^{2/3}\right)^3$ follows from \cite{MM}. Since by \eqref{T asymptotic} only even powers of $q^{-1}$ appear in the expansion, we must have $a_3 = \cO(\|f \|_{C^6})$, so
$$T = \ell + \frac{a_2}{q^2} + \frac{\cO(\|f\|_{C^6})}{q^3}+  \frac{\cO(1+\|f\|_{C^8})}{q^4}.$$
\subsection{A proof independent of \cite{DKW}} The proof of \eqref{xi} in  \cite{DKW} is given under the assumption of  axial symmetry.  However, this assumption is not  essential, even for the proof in \cite{DKW}. For the sake of independence,  we present another proof of Lemma \ref{MM},   at the mild cost of replacing  $\| f\|_{C^6}$ with $\| f\|_{C^7}$, and $\| f\|_{C^8}$ with $\| f\|_{C^9}$ in the lemma. 

 For the proof of  Lemma \ref{MM} we only need a weaker statement than \eqref{xi}, which states that there exist $1$-periodic smooth functions $\alpha_1$ and $\alpha_2$ defined only in terms of $\kappa$, with $\|\alpha_1 \|_{C^m} = \cO ( \| (1/\kappa)' \|_{C^{m+1}})$, $\|\alpha_2 \|_{C^m} = \cO ( \| (1/\kappa)' \|_{C^{m+2}})$ such that
\begin{equation} \label{xi v2} 
\xi_j = \xi_0 + \frac{j}{q} + \frac{\alpha_1 \left (\xi_0+ {j}/{q} \right )} {q^2} +  \frac{\alpha_2 \left (\xi_0+{j}/{q} \right )} {q^3}  + \frac{\cO(\| (1/\kappa)' \|_{C^3} )} {q^4}, \qquad  1 \leq j \leq q-1. 
\end{equation} 
We emphasize that the results of \cite{DKW} imply that  $\alpha_2 \equiv 0$, but we do not need this stronger statement. Assuming \eqref{xi v2}, the proof of Lemma \ref{MM} follows line by line as before but at the end we would obtain,
$$T = \ell + \frac{a_2}{q^2} + \frac{\cO(\|f\|_{C^7})}{q^3}+  \frac{\cO(1+\|f\|_{C^9})}{q^4}.$$
So if throughout we assume near circularity in $C^9$, the rest of the proof goes through without modification.

 To prove \eqref{xi v2}, we shall use Lazutkin coordinates of order 5 as defined in \eqref{Lazutkin of order N}. Let $(u, v)$ be such coordinates. Then the billiard map is of the form
 \begin{equation} \label{Lazutkin 5}\beta(u, v) = \big (u + v + v^5 a(u, v), v + v^6 b(u, v) \big), \end{equation} for smooth functions $a$ and $b$ that are bounded by $\cO (\| (1/ \kappa)' \|_{C^3} )$. These bounds follow from similar bounds for the derivatives of the billiard map
 in $(s, \phi)$ coordinates, and also from the construction of the coordinates $(u,v)$ 
 from $(s, \phi)$, which are provided in \cite[Prop 14.2 \& Lemma 14.6]{L}.

 Let $(u_0, v_0)$ correspond to a $(1, q)$ periodic orbit, meaning $\hat \beta^q (u_0, v_0) = (u_0 +1, v_0)$. By \eqref{Lazutkin 5} and finite induction we obtain, $$u_0 + q v_0  + q \cO(|v_0|^5 \| (1/ \kappa)' \|_{C^3} ) = u_0+1.$$
 This implies that 
 $$ v_0 +  \cO(|v_0|^5 \| (1/ \kappa)' \|_{C^3} ) = \frac1q, $$
 which in turn gives $ v_0 = \frac{1}{q}  + \frac{\cO(\| (1/ \kappa)' \|_{C^3} )}{q^5}.$

 Now for  each $1 \leq  j \leq q$, let $(u_j, v_j) = \hat \beta^j (u_0, v_0)$. Then by finite induction on $j$, we can show that
 $$ u_j = u_0+ \frac{j}{q}  +  \frac{\cO(\| (1/ \kappa)' \|_{C^3} )}{q^4}, \qquad v_j = \frac{1}{q}  + \frac{\cO(\| (1/ \kappa)' \|_{C^3} )}{q^4}. $$
 In the final step, we recall from \eqref{Lazutkin coordinates}, that $(\xi, \eta)$ and $(u, v)$ are related by
 $$(\xi, \eta) = \left ( u + v^2 A(u, v), v+ v^3 B(u, v)\right),$$
 where $A$ and $B$ are smooth and $\|A \|_{C^m} = \cO ( \| (1/\kappa)' \|_{C^{m+1}})$, $\|B \|_{C^m} = \cO ( \| (1/\kappa)' \|_{C^{m+2}})$ . If we write 
 $$ A(u, v) = A_0(u) + A_1(u) v + \cO(\| (1/\kappa)' \|_{C^3})|v|^2,$$ then 
 $$ \xi_j = u_j + v_j^2 A(u_j, v_j) = u_0 + \frac{j}{q} + \frac{A_0 \left (u_0+ {j}/{q} \right )} {q^2} +  \frac{A_1 \left (u_0+{j}/{q} \right )} {q^3}  + \frac{\cO(\| (1/\kappa)' \|_{C^3} )} {q^4}.$$
 Writing $u_0$ in terms of $\xi_0$ and $\eta_0$, we obtain \eqref{xi v2}.

 \end{proof}

  	We now focus on the part of the length spectrum that is less than the length of the boundary. While this does not inclusively correspond to $(1,q)$ periodic orbits (consider a very thin ellipse for example), but as we show it does for nearly circular domains.
  	\begin{lemm}\label{winding twice or more} Let $\Upsilon$ be a $(p, q)$ periodic orbit with $p \geq 2$ of a nearly circular deformation $\Omega_\tau$ of $D$ in $C^6$. Then for $\|f\|_{C^6}$ sufficiently small, the length of $\Upsilon$ is bounded below by $\ell_\tau$, uniformly for $\tau \in [0,1]$. Hence in particular, 
  	\begin{equation} \mathcal{L}(\Omega_\tau) \cap (0, \ell_\tau) = \bigcup_{q\geq 2} \mathcal{L}_{1, q}(\Omega_\tau). \end{equation}
  		\end{lemm}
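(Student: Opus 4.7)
The plan is to show that any $(p,q)$-orbit in a nearly circular $\Omega_\tau$ with $p\geq 2$ has length at least $4p\,(1-O(\|f\|_{C^6}))\geq 8\,(1-O(\|f\|_{C^6}))$, which strictly exceeds $\ell_\tau = 2\pi+O(\|f\|_{C^1})$ for $\|f\|_{C^6}$ small enough.

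First I will invoke the exact identity
\[
\sum_{j=0}^{q-1}\phi_j = p\pi,
\]
valid for every $(p,q)$-billiard orbit in any strictly convex planar domain: the orbit polygon has rotation index equal to its winding number $p$, so its exterior angles sum to $2\pi p$, and each exterior angle at a reflection is exactly $2\phi_j$ by the reflection law. Next I will write $\partial\Omega_\tau$ in polar form $\gamma_\tau(\theta) = (1+\tau f(\theta))(\cos\theta,\sin\theta)$ and apply the law of cosines
\[
|\gamma_\tau(\theta_{j+1})-\gamma_\tau(\theta_j)|^{2} = 4\sin^{2}\!\tfrac{\Delta_j}{2}\,(1+\tau f_j)(1+\tau f_{j+1}) + \tau^{2}(f_j-f_{j+1})^{2},
\]
which together with the angular-gap relation $\Delta_j = 2\phi_j(1+\cO(\|f\|_{C^6}))$ extracted from Lemma~\ref{perturbed beta} produces the chord lower bound $|\gamma_\tau(\theta_{j+1})-\gamma_\tau(\theta_j)|\geq 2\sin\phi_j\,(1-C\|f\|_{C^6})$.

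Summing these bounds yields $L(\Upsilon)\geq 2(1-C\|f\|_{C^6})\sum_j \sin\phi_j$. The angular concentration produced by Theorem~\ref{x_q increasing} and Lemma~\ref{angle of iterations} keeps every $\phi_j$ within $O(\|f\|)$ of the mean value $p\pi/q\leq\pi/2$, hence in $[0,\pi/2+O(\|f\|)]$; combined with the exact identity $\sum_j\phi_j=p\pi$ and the elementary inequality $\sin\phi\geq(2/\pi)\min(\phi,\pi-\phi)$ on $[0,\pi]$, this gives $\sum_j\sin\phi_j\geq 2p(1-O(\|f\|))$, so
\[
L(\Upsilon)\geq 4p\,(1-O(\|f\|_{C^6})) \geq 8\,(1-O(\|f\|_{C^6})) > \ell_\tau,
\]
uniformly in $p,q$ and $\tau\in[0,1]$. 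The set identity $\mathcal L(\Omega_\tau)\cap(0,\ell_\tau) = \bigcup_{q\geq 2}\mathcal L_{1,q}(\Omega_\tau)$ then follows at once: the displayed bound forces all contributions to $\mathcal L(\Omega_\tau)$ from orbits with $p\geq 2$ to lie at or above $\ell_\tau$, while each $\mathcal L_{1,q}$ sits inside $(0,\ell_\tau)$ by Lemma~\ref{MM}.

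The hard part will be making the angular-concentration step genuinely uniform in $p$: the constants in Lemma~\ref{angle of iterations} depend on a scale parameter $C$ through a factor $e^{2C}$, and for a $(p,q)$-orbit the natural choice is $C=p\pi$, so the estimate degrades exponentially in the winding number. Since the margin $4p-2\pi$ grows only linearly in $p$, the regime $p\gtrsim 1/\|f\|$ must be absorbed by a separate argument --- for instance by re-running the iteration in Lazutkin coordinates, where the map is a near-rigid rotation and errors propagate polynomially rather than exponentially, or by a topological winding-number / Cauchy--Crofton estimate that handles orbits of very large winding by cruder but easily uniform means.
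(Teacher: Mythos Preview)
Your approach shares the paper's underlying idea (compare with the disk and exploit $\sin\phi\geq(2/\pi)\phi$), and the exact identity $\sum_j\phi_j=p\pi$ is correct and a nice ingredient the paper does not use. However, you have correctly identified, but not closed, the essential gap: the angular-concentration step needed to keep all $\phi_j\leq\pi/2+O(\|f\|)$ relies on Lemma~\ref{angle of iterations} with scale $C\sim p\pi$, whose smallness requirement on $\|f\|_{C^6}$ degrades with $p$. Your suggested remedies (Lazutkin coordinates, Cauchy--Crofton) are not carried out, and it is far from clear either yields a uniform statement. Note also that both your chord estimate $\Delta_j=2\phi_j(1+O(\|f\|))$ and the multiplicative bound $\sin(\phi_j(1+\epsilon))\geq(1-C\epsilon)\sin\phi_j$ actually fail for $\phi_j$ near $\pi$ (Lemma~\ref{perturbed beta} is stated only for $\phi\leq\pi/2$; for $\phi$ near $\pi$ one has $x_1-x\approx\ell_\tau$, not $2\phi$), so angular control is needed even earlier than you indicate.

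The paper resolves the uniformity by a device you do not consider: rather than estimating the full $(p,q)$-orbit, it first shows that $\phi_0\in[(k_0-\tfrac12)\pi/(2q),(k_0+\tfrac12)\pi/(2q)]$ for some $k_0\geq 4$ (the bound $k_0\geq 4$ is forced by $p\geq 2$ via Lemma~\ref{angle of iterations} with a fixed small $C$), and then extracts a \emph{partial} orbit of only $q_0=[4q/k_0]+1$ reflections. This partial orbit satisfies $q_0\phi_0\leq 4\pi$ \emph{independently of $p$}, so Theorem~\ref{x_q increasing} and Lemma~\ref{angle of iterations} apply with a universal constant. A variational comparison with the disk (computing $\partial_\tau b(\tau)$ as in the proof of Lemma~\ref{Melnikov 2}) then gives the partial-orbit length as $2q_0\sin\phi_0+O(\|f\|_{C^6})\geq 8-4/k_0+O(\|f\|_{C^6})\geq 7+O(\|f\|_{C^6})$. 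Since the partial length lower-bounds the full length and $7>2\pi+O(\|f\|)\geq\ell_\tau$, the lemma follows. This partial-orbit reduction is the missing idea in your argument.
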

  	
  	
  	\begin{proof}[Proof of Lemma \ref{winding twice or more}] Proposition 5 of \cite{A2} would imply this lemma easily, however the proof of \cite{A2} is not correct. Hence we give an independent proof by means of variations.
  	
  	To clarify the idea, we first verify Lemma \ref{winding twice or more} in the case of the  unit disc $D$. It is well-known 
  	that every link of a billiard trajectories of $D$ (not necessarily a periodic trajectory)  intersects the boundary with the same angle of incidence  say $\phi$. One can then easily verify that length of each link is $2\sin \phi$, thus if the trajectory makes $q$ bounces its length must be $2q \sin \phi$.  The angle
  	for a $(p, q)$ periodic orbit on a circle is given by $\phi= \frac{p}{q}\pi$. Therefore, the length of a $(p, q)$ orbit on the circle is $2 q \sin \frac{p}{q} \pi$.  Since, $\sin x \geq \frac{2}{\pi} x$ on the interval $[0, \pi /2]$, we get $2q \sin \frac{p}{q} \pi \geq 4 p  > 2 \pi$ for $p \geq 2$.
  	
  	Now let $\partial \Omega_\tau = \partial D + \tau f N_0$ be a nearly circular deformation of the unit circle $\partial D$ with $f$ sufficiently small in $C^6$. We wish to approximate the lengths of $(p, q)$ periodic orbits of $\d \Omega_\tau$ by the ones of the disk $\d D$ using a variational method.  Let $(x_0, \phi_0)$ be an initial point in the phase space of $\partial \Omega_{\tau}$ of a $(p, q)$ periodic orbit of $\beta_\tau$ with $p \geq 2$. We keep in mind that $q \geq 4$ because $\frac{p}{q} \leq \frac12$.  Since we can choose $\phi_0 \in [0, \pi /2]$ (otherwise consider $\pi - \phi_0$), there is a unique $k_0 \in \{0, 1, 2, \dots , q \} $  such that
  	$$ \frac{(k_0-\frac12) \pi}{2q} \leq  \phi_0 \leq \frac{(k_0 + \frac12) \pi}{2q} .$$
  	We first claim that $k_0 \geq 4$. To prove this let $(x_j(\tau), \phi_j(\tau)) = \hat \beta_\tau^j (x_0, \phi_0)$ with $0 \leq j \leq q$. Since $(x_0, \phi_0)$ is a $(p, q)$ periodic orbit we have $x_q(\tau) - x_0 = p \ell_\tau$. Suppose $k_0 \leq 3$. Then by  Lemma \ref{angle of iterations}, for a given $\delta >0$ we get $\phi_j(\tau) \leq  \frac{ 7\pi(1+\delta)}{4q}$ for sufficiently small $f$ in $C^6$. In particular $$ \sum_{j=0}^{q-1} \phi_j(\tau) \leq \frac{7 \pi(1+\delta)}{4}. $$
  	On the other hand by \eqref{Lazutkin3} this sum must be larger than or equal to 
  	$$ \frac{1- \delta}{2} ( x_q(\tau) -x_0) = \frac{p \ell_\tau(1-\delta)}{2} \geq 2 \pi (1-\delta)^2, $$
  	which leads to a contradiction for $\delta$ small.  
  	
  	From now on we assume $k_0 \geq 4$. We consider the (partial) orbit $\{(x_j(\tau), \phi_j(\tau))\}_{j=0}^{q_0}$ in $\Omega_\tau$ where we define 
  	\[ q_0 := \left \{ \begin{array}{ll}   \left[ \frac{4q}{k_0} \right] +1 & \text{if} \quad k_0 \geq 5 \\
  	q & \text{if} \quad k_0= 4  \end{array} \right. . \]
  
  	 One can easily verify, using $q \geq k_0 \geq 4$, that $q_0$ never exceeds $q$. As a result, a lower bound for the length of this partial orbit provides a lower bound on the length of the desired full orbit  $\{(x_j(\tau), \phi_j(\tau))\}_{j=0}^{q}$. So let us set
  	$$ b(\tau) = \sum_{j=0}^{q_0-1} \| \gamma(\tau, x_{j+1}(\tau))  -  \gamma(\tau, x_j(\tau)) \|. $$ Obviously for the case of the unit disk $D$, i.e. $\tau=0$, we have $b(0) = 2q_0 \sin \phi_0$. By the mean value theorem, for all $\tau \in [0, 1]$, 
  	\begin{equation} \label{b} |b(\tau) - 2q_0 \sin \phi_0|  \leq \sup_{\tau \in [0, 1]} | \d _\tau b(\tau) |. \end{equation}
  	 We compute the variation of $b(\tau)$ in the same manner as in the proof of Lemma \ref{Melnikov 2}. We first write
  	\begin{align*}
  	\d_\tau b_\tau = & \sum_{j=0}^{q_0-1} \left ( \d_\tau x_{j+1} \d_s \gamma(\tau, x_{j+1}) -   \d_\tau x_{j} \d_s \gamma(\tau, x_{j}) \right ) \bigcdot \frac{\gamma(\tau, x_{j+1}) - \gamma(\tau, x_j)}{ \| \gamma(\tau, x_{j+1}) - \gamma(\tau, x_j) \|}
  	\\ &  + \sum_{j=0}^{q_0-1} \left ( \d_\tau \gamma(\tau, x_{j+1}) -   \d_\tau \gamma(\tau, x_{j}) \right ) \bigcdot \frac{\gamma(\tau, x_{j+1}) - \gamma(\tau, x_j)}{ \| \gamma(\tau, x_{j+1}) - \gamma(\tau, x_j) \|} .
  	\end{align*}
  	
  	Denote the two sums by $\Sigma_1$ and $\Sigma_2$, respectively. For $\Sigma_1$, a similar computation as in the proof of Lemma \eqref{critical points} shows that
  	$$ \Sigma_1   = \sum_{j=0}^{q_0-1} \cos \phi_{j+1}(\tau) \d_\tau  x_{j+1}(\tau) -  \cos \phi_j(\tau)  \d_\tau  x_{j}(\tau)
  	= \cos \phi_{q_0}(\tau) \d_\tau  x_{q_0}(\tau). 
  	$$
  	For $\Sigma_2$, following the proof of Lemma \ref{Melnikov 2}, we obtain
  	\begin{align*}
  	\Sigma_2 = & \d_\tau \gamma(\tau, x_0) \bigcdot \big ( N(\tau, x_0) \sin \phi_0 - T(\tau, x_0) \cos \phi_0 \big )+ \d_\tau \gamma(\tau, x_q) \bigcdot \big ( N(\tau, x_q) \sin \phi_q + T(\tau, x_q) \cos \phi_q \big ) \\
  	& + 2 \sum_{j=1}^{q_0-1} \d_\tau \gamma(\tau, x_j) \bigcdot  N(\tau, x_j) \sin \phi_j 
  	\\ =& n(\tau, x_0)\sin \phi_0  - t(\tau, x_0) \cos \phi_0 + n(\tau, x_{q_0}(\tau)) \sin \phi_{q_0}(\tau) + t(\tau, x_{q_0}(\tau)) \cos \phi_{q_0}(\tau) \\ & +2\sum_{j=1}^{q_0-1} n(\tau, x_j(\tau)) \sin \phi_j(\tau).   \end{align*}
  	
  	Let us estimate $| \Sigma_1|$ and $|\Sigma_2|$.  Since 
  	$$\phi_0 \leq \frac{(k_0 + \frac12) \pi}{2q} \leq \frac{4\pi}{q_0-1}, $$
  	by Theorem \ref{x_q increasing} we have 
  	$$ | \Sigma_1| \leq | \d_\tau x_{q_0}(\tau) | \leq C_1 q_0 \phi_0 \|f \|_{C^6} \leq 8 \pi C_1 \|f \|_{C^6}, $$
  	where $C_1$ is universal constant. 
  	
  	To estimate $|\Sigma_2|$ we first observe that by Lemma \ref{n and t}, each $n(\tau, x)$ and $t(\tau, x)$ is of size $\cO ( \|f \|_{C^1})$.  On the other hand by \eqref{Lazutkin3} and Theorem \ref{x_q increasing}, for $\|f\|_{C^6}$ small enough, we get
  	$$ \sum_{j=1}^{q_0 -1}  | \sin \phi_j(\tau) |  \leq  \sum_{j=1}^{q_0 -1}  \phi_j(\tau) \leq \frac{1+\delta}{2} (x_{q_0}(\tau) - x_0)  \leq 2 (1+\delta) \, q_0 \phi_0  \leq 32 \pi. $$
  	Revisiting the inequality \eqref{b}, we have just proved that
  	$$ b(\tau) = 2q_0 \sin \phi_0 + \cO ( \|f\|_{C^6}).$$ 
  	Since $\phi_0 \geq \frac{(k_0 -\frac12)\pi}{2q}$, we get
  	$$ b(\tau) \geq  \frac{4q_0}{ \pi } \phi_0 + \cO ( \|f\|_{C^6}) \geq 8 - \frac{4}{k_0}+ \cO ( \|f\|_{C^6}) \geq 7 + \cO ( \|f\|_{C^6}), $$
  	which is strictly larger than $\ell_\tau = 2 \pi + \cO(\|f\|_{C^1})$ for $ \|f\|_{C^6}$ sufficiently small. 
  		\end{proof}

  	In the next section we study the $(1, q)$ length spectrum in terms of the $q$-loop function and its first variation. 
  	
  \subsection{Length spectrum and loop functions}
  Let $\Omega_\tau$ be as in the previous sections. It is then obvious from the definition of $q$-loop fucntion $L_q(\tau, s)$ that
  \begin{equation} \mathcal{L}_{1, q} (\Omega_\tau) = \text{critical values of} \; L_q(\tau, s). \end{equation}
  Thus, in particular
  $$ T_q(\tau) = \max L_q(\tau, s), \quad t_q(\tau) = \min L_q(\tau, s). $$
  The following lemma  will be very useful for small values of $q$. 
  \begin{lemm}\label{T-t} For all $q \geq 2$, we have uniformly in $\tau \in [0,1]$:
  	$$ L_q(s) = 2q \sin(\pi /q) +  \cO ( \|f\|_{C^6}).$$
  	In particular,
  	$$ T_q(\tau) - t_q(\tau) = \cO ( \|f\|_{C^6}).$$
  	 \end{lemm}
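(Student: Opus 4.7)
The plan is to view $L_q(\tau,s)$ as an analytic family in $\tau \in [0,1]$ and reduce everything to the disk baseline $\tau=0$. On the unit disk $\Omega_0=D$, rotational symmetry forces the $q$-loop angle $\phi_q(0,s)$ to be independent of $s$; by uniqueness (Theorem \ref{loop angle}) its value is $\pi/q$, and the associated $q$-loop is a regular inscribed $q$-gon of side $2\sin(\pi/q)$. Hence $L_q(0,s) = 2q\sin(\pi/q)$ identically in $s$.

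For general $\tau$, I will use a first-order expansion in $\tau$ exactly as with the Melnikov function. By Definition \ref{Melnikov 1}, $\partial_\tau L_q(\tau,s) = M_q(\tau,s)$, and Lemma \ref{bound on Melnikov} gives the uniform bound $M_q(\tau,s) = \mathcal{O}(\|f\|_{C^2})$; since $\|f\|_{C^2} \leq \|f\|_{C^6}$, this is also $\mathcal{O}(\|f\|_{C^6})$. Integrating in $\sigma$ from $0$ to $\tau$,
$$L_q(\tau,s) - 2q\sin(\pi/q) \;=\; \int_0^\tau M_q(\sigma,s)\,d\sigma \;=\; \mathcal{O}(\|f\|_{C^6}),$$
uniformly in $\tau \in [0,1]$ and in $s$, which is the first assertion.

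The second assertion then follows at once: $T_q(\tau)=\max_s L_q(\tau,s)$ and $t_q(\tau)=\min_s L_q(\tau,s)$ both differ from the $s$-independent constant $2q\sin(\pi/q)$ by $\mathcal{O}(\|f\|_{C^6})$, so
$$T_q(\tau) - t_q(\tau) \;=\; \mathcal{O}(\|f\|_{C^6}).$$

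I do not foresee any real obstacle here, as the machinery of the loop function, the Melnikov identity, and the uniform bound on $M_q$ have already been set up. The only point worth emphasizing is that the estimate is a small-$q$ statement: it is crude as $q\to\infty$ (the sharper $q^{-3}$ gain appears in Lemma \ref{MM}), but crucially the implied constant in $\mathcal{O}(\|f\|_{C^6})$ is independent of both $q$ and $\tau$, which is exactly what is needed to control $T_q-t_q$ for each individual small $q$ where the asymptotic expansion of Lemma \ref{MM} is not yet effective.
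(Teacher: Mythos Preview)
Your proof is correct and essentially identical to the paper's: both identify $L_q(0,s)=2q\sin(\pi/q)$ for the disk and then control $L_q(\tau,s)-L_q(0,s)$ via the Melnikov function using Lemma~\ref{bound on Melnikov}, with the paper invoking the mean value theorem where you integrate $\partial_\tau L_q = M_q$ over $[0,\tau]$. Your remark that the constant is independent of $q$ and $\tau$ is exactly the content of the uniformity in Lemma~\ref{bound on Melnikov}.
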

   \begin{proof} By the mean value theorem and the definition \eqref{Melnikov 1} of $M_q$, we have
   	\begin{equation*} \left | L_q(\tau, s) - L_q(0, s) \right |  \leq  \sup_{\tau \in [0,1]} | \d_\tau L_q(\tau, s)| =  \sup_{\tau \in [0,1]} |M_q(\tau, s)|. \end{equation*}
   However, for the unit disk the loop function is constant. In fact $L_q(0, s) = 2q\sin ( \pi/q)$. 
   Also, 
   $$ T_q(\tau) - t_q(\tau) = \max L_q(\tau, s) -  \min L_q(\tau, s) \leq 2 \sup_{\tau \in [0,1]} |M_q(\tau, s)|. $$
   The lemma follows quickly from Lemma \ref{bound on Melnikov}.
\end{proof}

   Let us now state a key structural result for the $q$-length spectrum.
   \begin{lemm}\label{structure of lengths} Let $\d \Omega_\tau = \d D + \tau f N_0$ be an $\ep$-nearly circular deformation in $C^6$ i.e. $\|f\|_{C^6} = \cO(\ep)$. Assume in addition $\|f \|_{C^8} \leq 1$. Then for $\ep$ sufficiently small, the $q$-length spectra $\mathcal{L}_{1, q}(\Omega_\tau)$ are disjoint for distinct values of $q \geq 2$. Moreover, there exists $q_0$ uniform in $\tau$ and $f$ such that
   	\begin{itemize} 
   	\item[(a)] For $q \geq q_0$: 
   	\begin{equation} \label{consecutive} t_{q+1}(\tau) - T_q(\tau) = \min \mathcal{L}_{1, q+1}(\Omega_\tau) -   \max \mathcal{L}_{1, q}(\Omega_\tau) \geq \frac{1}{10(q+1)^3}, \end{equation}
   	 \begin{equation}\label{max min difference} T_q(\tau) - t_q(\tau) = \max \mathcal{L}_{1, q}(\Omega_\tau) -   \min \mathcal{L}_{1, q}(\Omega_\tau) \leq \frac{1}{100(q+1)^3}. \end{equation}
   	\item[(b)]  For $2 \leq q \leq q_0$, we have for sufficiently small $\|f\|_{C^2}$ that is uniform in $\tau$,
   	\begin{equation} \label{consecutive 2} t_{q+1}(\tau) - T_q(\tau) = \min \mathcal{L}_{1, q+1}(\Omega_\tau) -   \max \mathcal{L}_{1, q}(\Omega_\tau) \geq \frac{\eta_0}{10}, \end{equation} 
   	\begin{equation}\label{max min difference 2} T_q(\tau) - t_q(\tau)=\max \mathcal{L}_{1, q}(\Omega_\tau) -   \min \mathcal{L}_{1, q}(\Omega_\tau) \leq \frac{\eta_0}{100}, \end{equation}
   	where 
   	\begin{equation} \label{eta_0}\eta_0 =2(q_0+1)\sin ( \pi/(q_0+1) ) - 2q_0\sin ( \pi/q_0 ) \end{equation} 
   	\end{itemize}
   	\end{lemm}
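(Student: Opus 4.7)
My plan is to split the argument at a universal threshold $q_0$, treating the two regimes with different tools already developed.

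\emph{Large $q$, part (a).} Here I would appeal to the quantitative Marvizi--Melrose asymptotic of Lemma \ref{MM}. Setting $C(\tau) := \tfrac14\bigl(\int_0^{\ell_\tau}\kappa_\tau^{2/3}\,ds\bigr)^3$, the bounds $\kappa_\tau = 1 + \cO(\|f\|_{C^2})$ and $\ell_\tau = 2\pi + \cO(\|f\|_{C^1})$ give $C(\tau) = 2\pi^3 + \cO(\|f\|_{C^2})$. Subtracting consecutive instances of the asymptotic yields
\[
T_{q+1}(\tau) - T_q(\tau) = C(\tau)\,\frac{2q+1}{q^2(q+1)^2} + q^{-3}\cO(\|f\|_{C^6}) + \cO(q^{-4}),
\]
whose leading term is $\sim 4\pi^3/(q+1)^3$. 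Since separately $T_{q+1}-t_{q+1} = (q+1)^{-3}\cO(\|f\|_{C^6}) + \cO((q+1)^{-4})$, one picks a universal $q_0$ (dictated by the absolute constants in the $\cO(q^{-4})$ remainder and the value $C(0) = 2\pi^3$) beyond which those error terms are dominated by $\tfrac{1}{200(q+1)^3}$. After requiring $\|f\|_{C^6}$ sufficiently small, one concludes $T_q - t_q \leq \tfrac{1}{100(q+1)^3}$ directly, and $t_{q+1}-T_q = (T_{q+1}-T_q) - (T_{q+1}-t_{q+1}) \geq \tfrac{1}{10(q+1)^3}$.

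\emph{Small $q$, part (b).} With $q_0$ already fixed, I apply Lemma \ref{T-t} to write $L_q(\tau,s) = g(q) + \cO(\|f\|_{C^6})$ uniformly in $s,\tau$, where $g(q) := 2q\sin(\pi/q)$. Taking maximum and minimum over $s$ gives $T_q - t_q = \cO(\|f\|_{C^6})$ and $t_{q+1} - T_q = g(q+1) - g(q) + \cO(\|f\|_{C^6})$. A direct computation yields $g''(q) = -\tfrac{2\pi^2\sin(\pi/q)}{q^3} < 0$ on $[2,\infty)$, so $g$ is strictly concave; consequently its forward differences $g(q+1) - g(q)$ are positive and strictly decreasing. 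For $2 \leq q \leq q_0$ they are therefore bounded below by $\eta_0 := g(q_0+1) - g(q_0) > 0$. Requiring $\|f\|_{C^6}$ small enough that the $\cO(\|f\|_{C^6})$ error is below $\eta_0/100$ gives both estimates of (b). Finally, disjointness of $\{\mathcal{L}_{1,q}(\Omega_\tau)\}_{q\geq 2}$ is an immediate corollary of (a) and (b): one has $T_q(\tau) < t_{q+1}(\tau)$ for every $q \geq 2$, so the enclosing intervals $[t_q,T_q]$ are pairwise disjoint.

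The only subtlety — and not a serious obstacle given the lemmas already in place — is the order of quantifiers. One must fix $q_0$ first (depending only on the universal constants in the $\cO$ remainders of Lemma \ref{MM} and the numerical value $C(0) = 2\pi^3$); the constant $\eta_0$ is then determined; only afterwards does one impose the smallness of $\|f\|_{C^6}$, which may depend on $q_0$ (hence on $\eta_0$) but remains universal. The side hypotheses $\|f\|_{C^8} \leq 1$ and $\|f\|_{C^2}$ small required by Lemma \ref{MM} are subsumed in $\|f\|_{C^6} = \cO(\ep)$ for $\ep$ sufficiently small.
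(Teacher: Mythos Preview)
Your proof is correct and follows essentially the same approach as the paper's: invoke Lemma~\ref{MM} to fix a universal $q_0$ for part~(a), then use Lemma~\ref{T-t} together with the monotonicity of the circle gaps $g(q+1)-g(q)$ for part~(b), and handle the order of quantifiers exactly as you describe. The only notable difference is in part~(b): the paper appeals to Lemma~\ref{gaps decreasing} (strict convexity of the Mather $\beta$-function, specialized to the disk) to conclude that $g(q+1)-g(q)$ is decreasing, whereas you compute $g''(q) = -2\pi^2 q^{-3}\sin(\pi/q) < 0$ directly; your route is more elementary and self-contained, while the paper's has the advantage of applying verbatim to any strictly convex domain.
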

   
   \begin{rema} We note that the estimate \eqref{max min difference} is rather rough. In fact one has estimates of the form $\cO (q ^{-N})$, but this would force us to use more derivatives of $f$ which will be unnecessary for our purposes. 
   	\end{rema}
   	
   	\begin{proof} It is obvious from Lemma \ref{MM} that we can find a universal $q_0$ (hence in particular uniform in $\tau$), so that estimates \eqref{consecutive} and \eqref{max min difference} hold true. In fact we choose $\|f\|_{C^6}$ small enough and $q_0$ large enough so that the remainder terms in Lemma \ref{MM} satisfy
   \begin{equation}\label{q_0}q^{-3} \cO (\|f\|_{C^6}) + \cO(q^{-4}) < \frac{1}{100} (q+1)^{-3}. \end{equation} Estimate \eqref{max min difference 2} follows from Lemma \ref{T-t}, by choosing $\|f\|_{C^2}$ small enough in terms of the universal constant $\eta_0$. It only remains to prove \eqref{consecutive 2}. For this, we note that using Lemma \ref{T-t}, we have
   	$$t_{q+1}(\tau) - T_q(\tau) = 2(q+1)\sin ( \pi/(q+1) ) - 2q\sin ( \pi/q ) + \cO (\|f \|_{C^2}).$$
   	On the other hand by Lemma \ref{gaps decreasing} and the definition \eqref{eta_0} of $\eta_0$, for all $2 \leq q \leq q_0$ we have
   	 $$2(q+1)\sin ( \pi/(q+1) ) - 2q\sin ( \pi/q ) \geq \eta_0.$$ 
   	\end{proof}
   Let us state a very interesting corollary of this lemma. It shows that for nearly circular domains the number of bounces $q$ can be heard from the length spectrum.
   
  \begin{coro}\label{q can be heard} Let $\Omega_1$ and $\Omega_2$ be two nearly circular domains in $C^6$ satisfying conditions of Theorem \ref{structure of lengths}. Suppose 
  	\begin{equation} \label{Omega 1 and 2} \mathcal L (\Omega_1) \cap (0, 2 \pi +1/10) = \mathcal L (\Omega_2) \cap (0, 2 \pi +1/10). \end{equation} Then $\ell(\d \Omega_1) = \ell(\d \Omega_2)$ and for all $q \geq 2$,
  	$$ \mathcal L_{1, q}(\Omega_1) = \mathcal L_{1, q}(\Omega_2). $$ 
  	\end{coro}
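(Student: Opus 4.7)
The plan is to describe $\mathcal{L}(\Omega_i)\cap\bigl(0,\,2\pi+\tfrac{1}{10}\bigr)$ so completely that both the perimeter $\ell_i:=\ell(\partial\Omega_i)$ and each cluster $\mathcal{L}_{1,q}(\Omega_i)$ can be read off intrinsically from the underlying set. The hypothesis \eqref{Omega 1 and 2} will then force the corresponding invariants of $\Omega_1$ and $\Omega_2$ to agree.

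First I would verify the identity
$$
\mathcal{L}(\Omega_i)\cap\bigl(0,\,2\pi+\tfrac{1}{10}\bigr)\;=\;\{\ell_i\}\cup\bigcup_{q\geq 2}\mathcal{L}_{1,q}(\Omega_i).
$$
Since $\Omega_i$ is nearly circular, $\ell_i=2\pi+\mathcal{O}(\ep)<2\pi+\tfrac{1}{10}$. Lemma \ref{winding twice or more} identifies the part strictly below $\ell_i$ with $\bigcup_{q\geq 2}\mathcal{L}_{1,q}(\Omega_i)$, and its proof also shows that any $(p,q)$-length with $p\geq 2$ is at least $7+\mathcal{O}(\|f\|_{C^6})>2\pi+\tfrac{1}{10}$ for $\ep$ small enough. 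The perimeter $\ell_i$ itself lies in the (closed) length spectrum as a limit of $T_q(\Omega_i)$ by Lemma \ref{MM}, while higher integer multiples $k\ell_i$, $k\geq 2$, exceed $4\pi$. So nothing else enters the window.

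Next I would combine Lemma \ref{structure of lengths} with Lemmas \ref{MM} and \ref{T-t}: estimates \eqref{consecutive} and \eqref{consecutive 2} say the clusters are pairwise disjoint with $\max\mathcal{L}_{1,q}(\Omega_i)<\min\mathcal{L}_{1,q+1}(\Omega_i)$, and \eqref{T asymptotic v2} gives $T_q(\Omega_i)\nearrow\ell_i$. Hence $\ell_i$ is the unique accumulation point in the window, and \eqref{Omega 1 and 2} immediately yields $\ell_1=\ell_2$. Removing this common accumulation point leaves $\bigcup_{q\geq 2}\mathcal{L}_{1,q}(\Omega_i)$; the gap-versus-diameter comparison in Lemma \ref{structure of lengths} (the separation between consecutive clusters strictly exceeds the diameter of either one) makes each $\mathcal{L}_{1,q}(\Omega_i)$ recoverable as a maximal ``gap-separated'' subset of this union, enumerated by $q$ in strictly increasing order. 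Since this decomposition depends only on the underlying set, \eqref{Omega 1 and 2} forces $\mathcal{L}_{1,q}(\Omega_1)=\mathcal{L}_{1,q}(\Omega_2)$ for every $q\geq 2$.

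The main (though not severe) obstacle is confirming that the label $q$ is consistent across the two domains, that is, that the $k$-th cluster from below really is $\mathcal{L}_{1,k+1}$ for \emph{both} $\Omega_i$. I would handle this by noting that for the unit disk $\mathcal{L}_{1,q}=\{2q\sin(\pi/q)\}$ is strictly increasing in $q$, and appealing to the perturbation bound $L_q(s)=2q\sin(\pi/q)+\mathcal{O}(\|f\|_{C^6})$ of Lemma \ref{T-t}, which prevents any reordering once $\ep$ is small enough.
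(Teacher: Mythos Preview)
Your proof follows the same overall strategy as the paper's: use Lemma~\ref{winding twice or more} to restrict to $(1,q)$-lengths in the window, recover the perimeter, and then use the gap--diameter comparison of Lemma~\ref{structure of lengths} to peel off the clusters $\mathcal{L}_{1,q}$ one at a time. Your treatment of the labeling consistency via Lemma~\ref{T-t} is also essentially what the paper does implicitly through its inductive construction.

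There is one slip worth noting. You identify $\ell_i$ as ``the unique accumulation point in the window,'' but this need not hold: each $\mathcal{L}_{1,q}(\Omega_i)$ is the set of critical values of the smooth function $L_q$, and nothing in the setup prevents it from being infinite with its own internal accumulation points in $[t_q,T_q]$ (the paper itself remarks elsewhere that finiteness of $\mathcal{L}_{1,q}$ is not claimed). The fix is immediate and is exactly what the paper does: since $\ell_i$ lies in the closed set $\mathcal{L}(\Omega_i)\cap(0,2\pi+\tfrac{1}{10})$ and every $(1,q)$-length is strictly below $\ell_i$, one simply takes the \emph{supremum} (equivalently, the maximum) of the set to recover $\ell_i$. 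Similarly, your ``maximal gap-separated subset'' description is correct in spirit but a bit informal; the paper makes this concrete by running an induction with the \emph{universal} thresholds $\eta_0/10$ (for $q<q_0$) and $\tfrac{1}{10(q+1)^3}$ (for $q\geq q_0$), which are the same for $\Omega_1$ and $\Omega_2$ and therefore yield identical cluster decompositions.
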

  We comment that instead of $2 \pi +1/10$ one can use $2 \pi +\delta $ for any $\delta>0$, but the smallness of $f$ in $C^6$ would depend on $\delta$. 
  
  \begin{proof} As we saw in the proof of Lemma \ref{winding twice or more}, the length of every periodic orbit of type $(p\geq 2, q)$ must be larger than $7 + \cO(\|f\|_{C^6})$.   Let $\d \Omega_1 = \d D + f_1 N_0$ and $\d \Omega_2 = \d D + f_2 N_0$. We choose $f_1$ and $f_2$ small enough so that 
  	$$ 2\pi +1/10 <  7 + \cO(\|f_1\|_{C^6}) \leq  \inf \cup_{p\geq 2} \mathcal L_{p, q}(\Omega_1),  \qquad 2\pi +1/10 <  7 + \cO(\|f_2\|_{C^6}) \leq \inf \cup_{p\geq 2} \mathcal L_{p, q}(\Omega_2).$$
  	In addition we also choose them small enough such that
  	 $$ \ell(\d \Omega_1) =2\pi + \cO(\|f_1\|_{C^1}) < 2\pi+1/10, \qquad   \ell(\d \Omega_2) =2\pi + \cO(\|f_2\|_{C^1}) < 2\pi+1/10.$$
  	 Then under these conditions if we take supremum of \eqref{Omega 1 and 2}, we obtain $\ell(\d \Omega_1) = \ell(\d \Omega_2)$. Now by Lemma \ref{winding twice or more}, we get
  	 $$\bigcup_{q\geq 2} \mathcal{L}_{1, q}(\Omega_1) =\bigcup_{q\geq 2} \mathcal{L}_{1, q}(\Omega_2). $$
  	 Let us denote this common union by $U$. Let $q_0$ and $\eta_0$ be as in Lemma \ref{structure of lengths}. Recall that they are identical for $\Omega_1$ and $\Omega_2$. We first show by finite induction that for each $q \leq q_0$ one has $\mathcal{L}_{1, q}(\Omega_1) =\mathcal{L}_{1, q}(\Omega_2)$. Clearly $t_2(\Omega_1) = t_2(\Omega_2)$ because this is the infimum of the union $U$. We then move in $U$ starting at $t_2$ and record all gaps. By Lemma \ref{structure of lengths}, the first gap that is larger than or equal $\eta_0/10$ must take place at $T_2(\Omega_1)$ and $T_2(\Omega_2)$, hence these two quantities must agree. We then take infimum of $U \cap (T_2, \ell)$ to obtain $t_3(\Omega_1) = t_3(\Omega_2)$. By continuing this procedure we get that for all $2 \leq q\leq q_0$,
  	 \begin{equation} \label{tT} t_q(\Omega_1)= t_q(\Omega_2) \quad \text{and} \quad T_q(\Omega_1)= T_q(\Omega_2).\end{equation}
  	 The argument for $q \geq q_0$ is very similar. We shall use induction. Obviously \eqref{tT} holds for $q=q_0$. Suppose now that \eqref{tT} holds for some $\tilde q >q_0$. Then by taking inf of $U \cap (T_{\tilde q}, \ell)$ we obtain $t_{\tilde q +1}(\Omega_1) = t_{\tilde q +1}(\Omega_2)$. By Lemma \eqref{structure of lengths} again, starting at $t_{\tilde q +1}$ the first gap of size at least $(1/10) (\tilde q +2)^{-3}$ happens at $T_{\tilde q +1}(\Omega_1)= T_{\tilde q +1}(\Omega_2)$. Thus \eqref{tT} holds for all $q \geq 2$ and the corollary follows. 
  	\end{proof}
   
   \subsection{Length spectrum of a nearly circular ellipse}
   Let $E_\ep$ be an ellipse of eccentricity $\ep$. We choose $\ep$ small enough so that no periodic orbits of type $(p, q)$ with $p \geq2$ contribute to the part of the length spectrum that is less than $\ell_\ep$, the perimeter of $E_\ep$. This is possible by Lemma \eqref{winding twice or more}. Thus we will only focus on the $q$-spectrum, i.e. $\mathcal{L}_{1,q}$. Since the ellipse is completely integrable, for each $q \geq 3$, all $(1,q)$ periodic orbits have the same length. Therefore for all $q \geq 3$ we have $T_q = t_q$, or in other words the $q$-loop functions of the ellipse collapse to a constant.  In the case $q=2$, it is known that the only $(1, 2)$ periodic orbits are the bouncing ball orbits on the major and minor axes, whose lengths correspond to $T_2$ and $t_2$, respectively. Note that $T_2 \neq t_2$ if the ellipse is not a disk. In summary, 
   \begin{equation} \label{length spectrum of ellipse}
   \mathcal{L}(E_\ep) \cap (0, \ell_\ep) = \big \{ t_2(\ep) \leq T_2(\ep) < T_3(\ep) < \cdots T_q(\ep) < T_{q+1}(\ep) < \cdots \big \} ,
   \end{equation}
   and the gaps sequence $\{T_{q+1}(\ep) - T_q(\ep)\}_{q=2}^\infty$ is strictly decreasing by Lemma \ref{gaps decreasing}.

\section{Wave trace and Marvizi-Melrose parametrices}

   \subsection{Background on wave trace}  Suppose $\Omega$ is a smooth planar domain. Let
   $$w_\Omega(t) = \text{Tr} \cos ( t \sqrt{\Delta_\Omega})$$
   be the wave trace of $\Delta_\Omega$, the positive Laplacian associated to $\Omega$ with Dirichlet (or Neumman) boundary condition. Let also  $\text{SingSupp}\, w_\Omega(t)$ denote the singular support of $w_\Omega(t)$. By a result of Andersson-Merlose \cite{AnMe} (in the convex case) and Petkov-Stoyanov \cite{PS} (for general smooth domains), we have the so called Poisson relation
   \begin{equation} \label{AM} \text{SingSupp}\, w_\Omega(t) \subset -\mathcal{L}(\Omega) \cup \{ 0\} \cup \mathcal{L}(\Omega).  \end{equation}
   The wave trace is the distribution integral, 
   $$w_{\Omega}(t) = \int_{\Omega} E(t,x,x)d x, $$
   where $E(t, x, y) $ is the Schwartz kernel of $\cos t \sqrt{\Delta_{\Omega}}$ with the prescribed Dirichlet or Neumann
   boundary conditions. By distribution integral is meant that the integral $w_{\Omega}(\rho):= \int_{\R} w_{\Omega}(t){\rho}(t) dt$ is a temperate distribution. By Poisson relation, $w_{\Omega}(t)$ is a $C^{\infty}$ function on the complement of the length spectrum. The nature of its singularities at the lengths depends on the structure of the
   periodic billiard trajectories of $\Omega$. Only  convex smooth plane domains, and only lengths of periodic trajectories  in $ (0, |\partial \Omega| )$ are 
   relevant to this article, so we restrict our attention to them. 
   
   In general, the singularities of $w_{\Omega}(t)$ at $t \in \lcal(\Omega)$ can be extremely complicated since the 
   set of lengths and the set of periodic orbits themselves can be extremely complicated. The simplest periodic orbits
   are the non-degenerate ones, i.e. isolated non-degenerate  critical points of the length function on the  configuration space $(\partial \Omega)^q$ of $q$ points  minus the `diagonals' where two points are equal. The major and minor 
   axes of the ellipse are of this type. The next simplest are
   `clean fixed point sets', i.e. a smooth curve of periodic orbits of length $L$ satisfying the cleanliness condition (see \cite{GM2}), such as the periodic orbits of an ellipse (except the major and minor axes). Equivalently, {\it clean} is in the Bott-Morse sense that for each $q$, the fixed
   point set of $\beta^q$ is a submanifold of $B^*\partial \Omega$, and the
   tangent space to the fixed point set is the fixed point set of $(d \beta)^q$. In general, the set of periodic
   orbits of length $L$ may be as complicated as the critical point set of a smooth function.  
   
   As discussed in \cite[(6.8)]{MM}, there exists a Lagrangian (i.e. oscillatory integral)  parametrix $E_{\epsilon}'(t, x, y)$ for the wave kernel
   $E(t,x, y)$ away from the boundary and  modulo a  $C^{\infty} $ error,  $w_{\Omega}(t) $ may be expressed as the trace of this parametrix away from the boundary plus an additional boundary region  term. We will not use this expression directly and
   refer to \cite[(6.8)]{MM} for background. An alternative oscillatory integral formula based on layer potentials and boundary integral operators  was introduced by Balian-Bloch and  exploited in  \cite{Ze} (to which we refer for background and references).  
   
   In the case of non-degenerate transversally reflecting  periodic billiard trajectories, or clean curves of periodic billiard
   trajectories, of length L,  there exists a microlocal parametrix due to Chazarain which one can use to calculate the singularities
   of $w_{\Omega}(t)$ near $t = L$. In \cite{GM2}, Guillemin-Melrose review  this parametrix construction and use it to derive a singularity
   expansion for $w_{\Omega}(t)$ in these cases; see \cite[Theorem 1-Theorem 2]{GM2}. The non-degeneracy (or cleanliness) assumptions make it possible to apply stationary phase (on a manifold with boundary) to the integral $\int_{\R} \hat{\rho}(t) e^{i t \lambda} w_{\Omega}(t) dt$; see \cite[Lemma 5.2]{GM2}.
   
   In the generic case where all periodic orbits of length $< L(\partial \Omega)$ are non-degenerate transversal  periodic reflecting
   rays, the  length spectrum is discrete in $[0, L(\partial \Omega))$ and accumulates only  at $L(\partial \Omega)$.  The  wave trace admits a decomposition, on $\R^{\geq 0}$, into terms with singular support at a single length $L \in \mathcal L(\Omega)$:
   \begin{equation} \label{WT}w_{\Omega}(t) =  \hat{\sigma}(t) =  \begin{array}{ll} e_0(t) + \sum_{L \in \mathcal L (\Omega)} e_L(t), & {\rm{Sing Supp}} \;e_L = \{L\}.
   \end{array}  \end{equation} 
   The term $e_0(t)$ is singular only at $t  =0$ and admits the asymptotic expansion,
   $$
   e_0(t) =  C_{n} |\Omega| \Re \{(t + i0)^{-n +1} \} + C_{n-1}  |\partial \Omega| \Re \{(t + i 0)^{-n -\half}\} + \text{lower order terms}, $$
   in terms of homogeneous Lagrangian singularities decreasing in singularity by unit steps.  When the billiard flow of $\Omega$ has {\it clean fixed point sets}, 
   \begin{equation} \label{e gamma} e_L(t) = \sum_{\gamma: L_{\gamma} = L} e_{\gamma}(t), \end{equation}
   where the last sum is over components of the closed billiard trajectories of length $L$ or equivalently over components of the fixed points of iterates of the billiard map. In this case, 
   the terms $e_{\gamma}(t)$ admit singularity expansions depending on the dimension $d_{\gamma}$ of the component of the  fixed point set. They have the form
   \begin{equation} \label{WTgamma} e_{\gamma}(t) =  \Re \{ a_{\gamma, 1} (t-L + i 0^+)^{- n_ \gamma} \} + \text{lower order terms}, \end{equation}
   where the
   exponent $n_\gamma$ (the `excess') equals $1 + d_\gamma/2 $.   Since
   $\dim B^*\partial \Omega = 2$, the cleanliness means that either the fixed
   point set consists of isolated non-degenerate fixed points or else of smooth
   curves of transversally non-degenerate fixed points. 
   
   Much of the difficulty of inverse spectral theory is caused by multiplicities in the length spectrum. In the non-degenerate or clean case, the Poisson formula \eqref{WT} then expresses a singularity at $t=L$ as the  sum of contributions from all closed orbits
   of length $L$. Since the coefficients are signed, the terms in this sum may cancel. 
   
   We do not use the Poisson formula \eqref{WT}, because
   cleanliness is not even a generic condition, and does not hold for the fixed point set of
   $\beta^q$  for general almost
   circular domains. Instead, we  follow the idea of  Marvizi-Melrose \cite{MM} to break up $w_{\Omega}(t)$ into
   a sum of $q$-bounce contributions, and then to express the q-bounce contribution as an oscillatory integral
   whose phase is, roughly speaking, the q-bounce loop length function for loops of winding number one. In the nearly circular case, we can prove
   that the Melrose-Marvizi type  parametrices are valid for all bounce numbers $q \geq 3$. Since the closed trajectories do not necessarily
   form clean sets, it  is not generally possible to apply stationary
   phase to these oscillatory integral. But we can use them effectively in the inverse problem.

   \subsection{Marvizi-Melrose Parametrices for nearly circular domains}
 Let $\Omega$ be a nearly circular domain in $C^6$, that is $\d \Omega = \d E_0 + f(\theta) N_0$ for some smooth function $f(\theta)$ on the unit circle $\d E_0$ with $\| f\|_{C^6}$ sufficiently small. 
   
By Lemmas \ref{structure of lengths} and \ref{winding twice or more} for $\tau =1$, the  intervals $[t_q, T_q]$ are disjoint from each other for $q \geq 2$, and are also disjoint from $[t_{p,q}, T_{p,q}]$ for all $p>1$ and $q \geq 2$. Hence we can choose a cutoff function $\hat{\chi}_q(t) \in C^\infty_0(\R)$ that equals one on the interval $[t_q, T_q]$, whose support does not contain any lengths in $ \mathcal{L}_{p, m}$ with $ (p, m)\neq (1, q)$.
 We then denote as in \cite{MM}, 
	\begin{equation} \label{sigma} \hat{\sigma}_{1,q}(t) = \hat{\chi}_q(t) w_\Omega(t). \end{equation}
	The distribution $\hat{\sigma}_{1,q}(t)$ is the localization of the wave trace to the interval $[t_q, T_q]$; it satisfies 
	$$ \text{SingSupp}  \hat{\sigma}_{1,q}(t) \subset [t_q, T_q],$$ 
	and
	$$ \hat{\sigma}_{1,q}(t) = w_\Omega(t) \quad \text{near $[t_q, T_q]$}.$$
	Thus instead of studying the wave trace, we study the localized wave trace $\hat{\sigma}_{1,q}(t)$.

	Marvizi-Melrose \cite{MM} proved that (in fact for any smooth strictly convex domain): 
	\begin{theo}[Proposition 6.11 of \cite{MM}]
	For $q \geq q_0(\Omega)$ sufficiently large, one has a parametrix of the form 
	\begin{equation} \label{trace parametrix} \hat{\sigma}_{1,q}(t) = \int_0^\infty \int_{\d \Omega} \Re \left ({e^{i \pi r_q/4} e^{i\xi(t - L_q(s))} \xi^{\frac12} a(q, t, s, \xi}) \right) ds d \xi  + R_q(t), \end{equation}
	where $R_q(t)$ is a smooth function, $L_q(s)$ is the $q$-loop function, $r_q$ is a Maslov index that depends on $q$ and the boundary condition\footnote{The indices $r_q$ for Dirichlet and Neumman boundary conditions differ from each other by $4 q$.}, and $a(q, t, s, \xi)$ is a smooth classical symbol in $\xi$, and periodic in $s$, of the form
	$$a(q, t, s, \xi) \sim \sum_{j=0}^\infty a_j(q,t, s) \xi^{-j} , \qquad (\xi \to +\infty),$$ whose principal symbol $a_0(q, t, s)=a_0(q, s)$ is independent of $t$ and is a positive function on $\d \Omega$. 
\end{theo}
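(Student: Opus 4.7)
The plan is to construct a Fourier-integral parametrix for the half-wave propagator $e^{it\sqrt{\Delta_\Omega}}$ by reflection, expand it as a sum indexed by the number of bounces off $\partial\Omega$, and identify the $q$-bounce summand of winding number one as the only contribution picked up by $\hat{\chi}_q$. For a smooth strictly convex $\Omega$ away from glancing, the standard Chazarain--Melrose--Taylor construction writes $U(t)=\cos(t\sqrt{\Delta_\Omega})\sim \sum_m U_m(t)$ microlocally, where $U_m(t)$ is an FIO whose canonical relation is the graph of the $m$-fold broken billiard flow lifted to $T^*\Omega$. Composing these $m$ reflection operators, the boundary-to-boundary generating function of $U_m$ is exactly the $m$-bounce length $\Psi_m(s,s')$ of the present paper, so each $U_m(t)$ has a local representation as an oscillatory integral over $\partial\Omega\times\partial\Omega\times \R_+$ with phase $\xi(t-\Psi_m(s,s'))$.

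Using Lemmas \ref{structure of lengths} and \ref{winding twice or more}, the cutoff $\hat{\chi}_q$ isolates a band of lengths that meets $\mathcal{L}(\Omega)$ only in $\mathcal{L}_{1,q}$; any summand $U_m(t)$ with $m\neq q$ or with winding number $\geq 2$ has a phase that is non-stationary on the support of $\hat{\chi}_q$, and so contributes smoothly to $\hat{\chi}_q(t) w_\Omega(t)$. After taking the trace and localizing to the boundary one therefore gets, modulo $C^\infty$,
\begin{equation*}
\hat{\sigma}_{1,q}(t) \,\equiv\, \int\!\!\int\!\!\int e^{i\xi(t-\Psi_q(s,s'))}\, b(q,t,s,s',\xi,\phi)\,d\phi\,d\xi\,ds\,ds',
\end{equation*}
restricted via the trace to $s=s'$, with $\phi$ a fiber variable on $B^*\partial\Omega$. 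The critical equation $\partial_\phi \Psi_q(s,s)=0$ is the closure condition for the $q$-loop, and Theorem \ref{loop angle} says this is uniquely and nondegenerately solved by $\phi=\phi_q(s)$ in the winding-one regime. Stationary phase in $\phi$ consumes a factor of $\xi^{-1/2}$, leaves the reduced phase $\xi(t-L_q(s))$, and collapses $b$ to an amplitude of order $\xi^{1/2}$ with leading symbol $a_0(q,s)$ independent of $t$.

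What remains is to extract $r_q$ and to verify positivity of $a_0(q,s)$. The Maslov index accumulates a fixed contribution at each reflection, whose Dirichlet/Neumann discrepancy is $4$ per bounce (matching the footnoted $4q$ total), plus the signature of the $\phi$-Hessian of $\Psi_q$ at the critical point, which is computable from the twist coefficient of $\beta^q$. The principal symbol factors as a product of Jacobi-field ratios along the $q$-loop at $s$; positivity follows because, for winding one, consecutive bounces are separated by less than a focal distance, so no Jacobi determinant changes sign. The main difficulty I anticipate is precisely the composition step: one must show that the composition of $q$ reflection FIOs actually yields a single FIO with clean phase $\xi(t-L_q(s))$ rather than a folded Lagrangian distribution. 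This requires $\beta^q$ restricted to the winding-one strip to project diffeomorphically onto $\partial\Omega$, which is the geometric content of Theorem \ref{loop angle} and the source of the Marvizi--Melrose threshold $q\geq q_0(\Omega)$; the improvement of the present article is to push this threshold down to $q\geq 2$ for nearly circular domains.
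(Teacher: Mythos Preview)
This theorem is Proposition~6.11 of \cite{MM}; the present paper cites it without proof, so there is no in-paper argument to compare your sketch against. The one place the paper comments on the \cite{MM} proof is in the proof of Theorem~\ref{parametrix theorem}, where it says that the only point where $q$ large is used in \cite{MM} is the existence of the $q$-length function $\Psi_q(s,s')$ near the diagonal as a generating function for $\beta^q$. Your final paragraph identifies exactly this issue, so your understanding of the crux matches the paper's.

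Your outline is broadly the Marvizi--Melrose strategy, but the role you assign to $\phi$ is off. In the \cite{MM} construction the fiber variables eliminated by stationary phase are the intermediate reflection points $s_1,\dots,s_{q-1}$ (the phase of the composed FIO is $\xi\bigl(t-\sum_j\|\gamma(s_{j+1})-\gamma(s_j)\|\bigr)$), and the nondegeneracy of that stationary phase is precisely the statement that $\Psi_q(s_0,s_q)$ exists as a smooth generating function. There is no residual angle $\phi$ to integrate out: once $\Psi_q$ is in hand, the trace simply sets $s_0=s_q=s$ and the phase becomes $\xi(t-L_q(s))$. The factor $\xi^{1/2}$ arises from the half-density on the Lagrangian (the order of the resulting conormal distribution), not from a one-dimensional $\phi$-stationary-phase. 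Your expression ``$\partial_\phi\Psi_q(s,s)=0$'' is not meaningful as written, since $\Psi_q$ does not depend on $\phi$.

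A smaller point: you invoke Lemmas~\ref{structure of lengths} and~\ref{winding twice or more} to justify that $\hat\chi_q$ isolates $\mathcal{L}_{1,q}$, but those lemmas are proved in this paper only for nearly circular domains, whereas the Marvizi--Melrose statement is for arbitrary smooth strictly convex $\Omega$. In \cite{MM} the separation of $\mathcal{L}_{1,q}$ from the rest of the length spectrum for large $q$ comes from their own length-spectrum asymptotics (the analogues of \eqref{lengths difference}--\eqref{T asymptotic}), not from the near-circular lemmas.
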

		\begin{rema}In \cite{MM}, the factor $\xi^\frac12$ is missing from the integrand of \eqref{trace parametrix}. In \cite{P}, there is instead a factor $\xi$. None of these are correct. The correct factor in the principal term must be $\xi^{\frac12}$ as one can easily inspect by the wave trace asymptotic  of Guillemin-Melrose \cite{GM1} of simple non-degenerate periodic orbits. An independent proof of this parametrix, given by Vig \cite{Vig2}, also confirms the factor  $\xi^{\frac12}$. \end{rema}

We prove that the Marvizi-Melrose parametrix \eqref{trace parametrix} is valid for all $q \geq 2$ for nearly circular domains.
	\begin{theo} \label{parametrix theorem} Suppose $\Omega$ is nearly circular in $C^8$ meaning that $\d \Omega = \d E_0 +f N_0$ with $\|f\|_{C^8}$ sufficiently small. Then the parametrix $\eqref{trace parametrix}$ for the wave trace $w_\Omega(t)$ is valid for all $q \ge 2$. 
		\end{theo}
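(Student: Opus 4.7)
The plan is to follow the Marvizi-Melrose construction of \cite{MM} essentially verbatim, but to replace their implicit largeness hypothesis on $q$ with the explicit global information supplied by Theorems \ref{loop angle}, \ref{q path}, \ref{x_q increasing} and Lemma \ref{vartheta derivative}. The central observation is that the only obstruction to the validity of the parametrix \eqref{trace parametrix} for a given $q$ is the existence of a globally defined smooth $q$-loop function $L_q(s)$ on $\partial\Omega$, together with uniform non-degeneracy of the normal projection from the winding-number-one sheet of $\beta^q(B^*\partial\Omega)$ onto the base. In the nearly circular $C^8$ regime, both are provided by our preceding work.

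First, I would use Lemma \ref{structure of lengths} together with \eqref{AM} to isolate the microlocal contribution of $(1,q)$-orbits. Specifically, for $q \le q_0$ the gap $\eta_0/10$ and for $q > q_0$ the gap of size $(q+1)^{-3}/10$ let me choose $\hat\chi_q \in C^\infty_0(\R)$ supported in a neighborhood of $[t_q,T_q]$ that contains no length from $\mathcal{L}_{1,m}$ with $m \ne q$ and no length coming from $p\ge 2$ winding numbers (via Lemma \ref{winding twice or more}). This reduces the problem to constructing a local parametrix for the Lagrangian distribution on $\R$ carried by the flow-out of the $(1,q)$-orbits. Next, I would construct the Lagrangian $\Lambda_q$: by Theorem \ref{loop angle}, the loop angle $\phi_q(s)$ is a smooth $\partial\Omega$-periodic function, so the loop function $L_q(s) = \Psi_q(s,s)$ is smooth on all of $\partial\Omega$. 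The submanifold
\begin{equation*}
\Lambda_q = \{(t,\tau;s,\sigma) : t = L_q(s),\ \sigma = -\tau\, L_q'(s),\ \tau > 0\} \subset T^*\R \times T^*\partial\Omega
\end{equation*}
is then a well-defined, globally smooth Lagrangian, which may be written as the graph of $dL_q$ after integrating out the trace variable. In the language of \cite[\S 6]{MM}, this is the statement that the image $\beta^q(B^*_x\partial\Omega)$, restricted to the winding-one sheet, projects diffeomorphically onto $\partial\Omega$ in a neighborhood of $x$ — exactly what Theorem \ref{x_q increasing} establishes by showing $\partial_\phi x_q > 0$ on the relevant range of $\phi$.

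With the phase function $t - L_q(s)$ globally defined and non-degenerate, the amplitude $a(q,t,s,\xi)$ is determined by the standard transport equations along the broken geodesic flow associated with $\Lambda_q$, as in \cite[\S 6]{MM}. The leading symbol $a_0(q,s)$ is transported from a positive half-density on the initial fiber and thus is positive; the non-degeneracy of the transport step uses Lemma \ref{vartheta derivative}, which guarantees $\partial_\phi \vartheta_q$ stays bounded away from zero uniformly in $q$ for $\|f\|_{C^6}$ small. The Maslov factor $e^{i\pi r_q/4}$ keeps track of the $q$ reflections and the passages through caustics of the flow-out; since our flow is a smooth perturbation of the disk flow, $r_q$ is the same integer as for the disk and differs between Dirichlet and Neumann by $4q$. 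Finally, the fact that $R_q(t)$ is $C^\infty$ follows from a standard Egorov/Duistermaat-Hörmander calculus argument, using that the cutoff $\hat\chi_q$ excludes all other Lagrangians.

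The main technical point, and the reason for the $C^8$ hypothesis rather than $C^6$, is the uniform control of the amplitude and its classical symbol expansion. Constructing $a_0, a_1, \dots$ inductively requires differentiating $L_q$ (and the Jacobian $\partial_\phi x_q$) twice and solving first-order transport equations whose coefficients involve further derivatives of the billiard map. Lemma \ref{perturbed beta} effectively gives six derivatives of $\hat\beta_\tau$ in terms of $\|f\|_{C^6}$, and each iteration propagates derivative loss; tracking this through $q$ iterations while keeping the estimates uniform in $q$ is the delicate part. The extra two derivatives in the hypothesis $\|f\|_{C^8} \ll 1$ absorb this loss and let the bounds in Theorem \ref{x_q increasing} and Lemma \ref{vartheta derivative} be differentiated once more, which is what is needed to produce a legitimate classical symbol $a(q,t,s,\xi)\sim\sum_j a_j\,\xi^{-j}$ uniformly for all $q \ge 2$. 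Once this uniformity is in hand, the rest of the Marvizi-Melrose argument applies with no modification.
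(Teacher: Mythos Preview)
Your proposal is correct and follows the paper's approach: the decisive input is Theorem \ref{q path}, which supplies the smooth $q$-length function $\Psi_q(s,s')$ near the diagonal for all $q\geq 2$, after which the construction of \cite{MM} runs verbatim. The paper's proof is literally this one-line observation and does not reconstruct the Lagrangian or the transport equations as you do; your account of the $C^8$ hypothesis via derivative loss in the amplitude is speculative---in the paper the proof itself invokes only $C^6$, with $C^8$ entering through Lemma \ref{structure of lengths}, which is needed to make the cutoff $\hat\chi_q$ (and hence $\hat\sigma_{1,q}$) well defined.
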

\begin{proof} As in \cite{MM}, the first step in the proof is the reduction of the wave trace to a boundary integral for which we provide a simple proof independent of \cite{MM}. We shall use a Rellich-type identity (essentially Green's 2nd identity). Let $X$ be a suitable vector field defined in a neighborhood of $\Omega$. Let $\phi_j$ be an $L^2$-normalized eigenfunction of $\Delta =- \d^2_x - \d^2_y$, with Dirichlet boundary condition on $\Omega$, associated to the eigenvalue $\lambda^2_j$. First, by Green's second identity we have:
		\begin{equation} \langle \phi_j, [\Delta , X]\phi_j \rangle_{L^2(\Omega)} = \int_\Omega (\Delta -\lambda_j^2 )\phi_j \, X \phi_j  - \phi_j  X ( \Delta - \lambda_j^2) \phi_j
		+ \int_{\d \Omega} \d_n \phi_j \, X \phi_j - \phi_j \, \d_n (X \phi_j).\end{equation} 
		But since $ ( \Delta - \lambda_j^2) \phi_j =0$ and $\phi_j|_{\d \Omega}=0$, we get 
		$$\langle \phi_j, [\Delta , X]\phi_j \rangle_{L^2(\Omega)} = \int_{\d \Omega} \d_n \phi_j \, X \phi_j.$$ Now if we choose the vector field $X = x \d_x + y \d_y$, we get $[\Delta, X] = 2 \Delta$, which implies that 
		\begin{equation}\label{Hassell?} 2 \lambda_j^2 =  \int_{\d \Omega}  (X \cdot n) | \d_n \phi_j|^2. \end{equation}
		Here, the function $(X.n)(s)$ is the dot product of the position vector $X(s)$ on the boundary with the outer unit normal $n(s)$ to $\d \Omega$ at $s$. Using this equation the second time derivative of the trace of $E(t)= \cos (t \sqrt{\Delta_\Omega})$ can be written as (cf. \eqref{WTDEF}): 
		\begin{equation} \label{Hassell wave trace}
		\d_t^2  w_{\Omega}(t)= - \frac12 \int_{\d \Omega} (X \cdot n) \d_{n_x} \d_{n_y} E(t, x, y)|_{x=y}, \end{equation}
		where $E(t, x, y)$ is the distributional kernel of $E(t)$ and $\d_{n_x}, \d_{n_y}$ are outer normal derivatives in the variables $x, y$, respectively. Equation \eqref{Hassell wave trace} is our reduction to the boundary. We now need to find a parametrix for the distribution $\d_{n_x} \d_{n_y} E(t, x, y)|_{x=y}$ and plug it into \eqref{Hassell wave trace}. This distribution was studied thoroughly  in \cite{HeZeE}. Away from the tangential directions to $\d \Omega$, it is a Lagrangian distribution whose wavefront relation satisfies: 
		$$ \text{WF}' (\d_{n_x} \d_{n_y} E(t, x, y)) \subset \bigcup_{ q \geq 0} \Gamma_{q, \pm}^\d,$$
		where 
		\begin{equation}\label{Gamma q} \Gamma_{q, \pm}^\d = \left \{ (t, \pm \tau, x, \xi, y, \eta) \in T^*(\R \times \d \Omega \times \d \Omega) \left\lvert \begin{array}{ll}  \tau >0, \; |\xi| < \tau, \; |\eta| < \tau \\ \tilde{\beta}^q \left (x, \frac{\xi}{\tau} \right) = \left (y, \frac{y}{\eta}\right),\; t= t\left(q, x, \frac{\xi}{\tau} \right) \end{array}  \right. \right\}. \end{equation}
		Here, $\tilde \beta$ is the billiard map as a map on the ball bundle $B^* \Omega = \{ (x, \xi) \in T^* \d \Omega: \; |\xi| \leq 1 \}$ instead of $\Pi = \d \Omega \times [0, \pi]$. Note that $\tilde \beta: B^* \Omega \to B^* \Omega$ and $\beta: \Pi \to \Pi$ are related by
		$$ \tilde \beta (x, \xi) = \beta ( x, \arccos(\xi)).$$
		Furthermore, in \eqref{Gamma q}, $t(q, x, \xi/\tau)$ is defined as follows. Let $\xi' \in T_x^* \bar{\Omega}$ be the unique inward unit vector whose orthogonal projection onto the cotangent space of the boundary $T_x^* \d\Omega$ is $\xi/\tau$. Then $t(q, x, \xi/\tau)$ is the time that it takes to travel from $(x, \xi')$ and make precisely $q$ reflections along the billiard trajectory and of course end at $(y, \eta')$. 
		
		Since for nearly circular domains $ \text{SingSupp}  \hat{\sigma}_{1,q}(t) \subset [t_q, T_q],$ we restrict our attention to a (time) neighborhood of $[t_q, T_q]$.  We recall that we are only interested in a parametix near the diagonal. These two conditions force the billiard trajectories from $x$ to $y$ to wind around the boundary almost once. By our Theorem \ref{q path}, for each $q \geq 2$ there is a unique such trajectory whose length we denoted by $\Psi_q( x, y)$ (see Def \ref{Psi}). In particular, for $(t, \pm \tau, x, \xi, y, \eta) \in \Gamma_{j, \pm}^\d$ and $t \in [t_q, T_q]$, we have  $t(q, x, \xi / \tau) = \Psi_q( x, y)$. In fact an easy computation (see for example \cite{Vig1, Vig2}) shows that $ \Psi_q(x, y)$ generates $\tilde \beta^q$, in the sense that,
		$$ \tilde \beta^q (x , -\d_x\Psi_q ) = (y, \d_y \Psi_q).$$
		Now let us define the phase functions, 
		$$\Phi_{\pm, q}:= \pm \tau( t - \Psi_q(x, y)).$$ The critical point set of $\Phi_\pm$ with respect to $\tau$ is given by $$C(\Phi_{\pm, q}) = \{(t, \pm \tau, x, y)| \; t = \Psi_q(x, y) \}.$$
		Hence the image of $C(\Phi_{\pm, q})$ under the canonical embedding 
		$$\iota (t, \tau, x, y) = (t, \d_t \Phi_{\pm, q}, x, \d_x \Phi_{\pm, q}, y, -\d_y \Phi_{\pm, q}) = (t, \pm \tau,  x, \mp \tau \d_x \Psi_q, y, \pm \tau \d_y \Psi_q),$$
	is precisely $\Gamma_{q, \pm}^\d$ defined by \eqref{Gamma q}. In other words, $\Phi_{\pm, q}$ parameterizes $\Gamma_{q, \pm}^\d$. By \cite[Prop 4]{HeZeE}, the principal symbol of $ \d_{n_x} \d_{n_y} E(t, x, y)$ on $\Gamma_{q, \pm}^\d$, as a half-density, is of the form
		\begin{equation} \label{Symbol} e^{i \pi m_q /4} g(q, x, \xi, \tau) | dx \wedge d \xi \wedge d \tau|^{1/2}, \end{equation} where $m_q$ is a Maslov index and $g$ is a certain positive symbol. Now  by \cite[IV, Prop 25.1.5]{Ho}, and because $\d_{n_x} \d_{n_y} E(t, x, y)$ is a real-valued Lagrangian distribution,  there exists a classical symbol $ b(q, t, x, y, \tau)$ such that for $t \in [t_q, T_q]$ and $(x,y)$ near the diagonal of the boundary,  
	$$ \d_{n_x} \d_{n_y} E(t, x, y) = \Re \int_0^\infty e^{i \pi m_q /4} e^{i \tau( t - \Psi_q(x, y))} b (q, t, x, y, \tau) d \tau + \text{(smooth)}.$$
	
	By expanding $b$ into a power series of $t - \psi_q$, and performing integration by parts, we can in fact eliminate the $t$ variable from the amplitude $b$. The principal term $b_0$ of $b$ can be calculated in terms of $g$ in \eqref{Symbol}. Since $g >0$, we have $b_0>0$. 
	Plugging this parametrix into \eqref{Hassell wave trace} and noticing that $\Psi_q(x, x)= L_q(x)$, we obtain a parametrix for $\d_t^2 w_{\Omega}(t)$ in the form:
	$$ \d_t^2 w_{\Omega}(t) |_{[t_q, T_q]}= \int_0^\infty \int_{\d \Omega} \Re \left (e^{i \pi r_q/4} e^{i\tau(t - L_q(s))} \tau^{\frac12}  \tilde b(q, s, \tau) \right) ds d \tau  + (\text{smooth}),$$
	where $r_q = m_q+4$ and
	$$  \tau^{\frac12}\tilde b(q, s, \tau) = \frac12  b(q, s, s, \tau) (X\cdot n)(s).$$
	Because $b_0>0$ and because $X. n >0$ for a strictly convex domain (if the origin is chosen to be inside $\Omega$), the principal term $\tilde b_0$ must be positive as well. 
	Now let 
$$ \tilde w_{\Omega}(t):=  \int_0^\infty \int_{\d \Omega} \Re \left (e^{i \pi r_q/4} e^{i\tau(t - L_q(s))} \tau^{\frac12}  a(q, s, \tau) \right) ds d \tau, $$
with amplitude $a(q, s, \tau):= -\tau^{-2}  \tilde b(q, s, \tau)$. It is then clear that 
$\d_t^2 (w_{\Omega}(t)  -  \tilde {w}_\Omega(t))$ is smooth on  $[t_q, T_q]$, thus $w_{\Omega}(t) = \tilde w_{\Omega} (t) + (\text{smooth}),$ and the result follows. 
	
		\end{proof}
	
\begin{rema}
	The article \cite{Vig2} calculates the principal term $a_0$ explicitly. Also, it is possible to use the
	layer potential formulae of \cite{Ze} for the semi-classical resolvent kernel to prove Theorem \ref{parametrix theorem}.  Since this proof is much longer and more complicated than the proof above, we do not present it here.
\end{rema}
	
	\subsection{$\hat{\sigma}_{1,q}$ as a spectral invariant: Proof of
	Proposition \ref{MMSPINV}}
	
	In this section we state more formally the following: 
	\begin{prop}\label{MMSPINVb} If $\Omega$ is a nearly circular domain in $C^8$, then the distribution $\hat{\sigma}_{1,q}(t)$ is a spectral invariant
	of $\Omega. $ \end{prop}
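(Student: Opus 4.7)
The plan is to reduce everything to two facts: (i) the wave trace $w_\Omega$ is a spectral invariant as a tempered distribution, and (ii) the cutoff $\hat{\chi}_q$ entering the definition \eqref{sigma} can be chosen in a way depending only on the spectrum of $\Omega$.

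First I would pin down the singular support of $w_\Omega$ on $(0,\ell_\Omega)$. By the Andersson--Melrose inclusion \eqref{AM} together with Lemma \ref{winding twice or more} (which rules out contributions from orbits with winding number $p\geq 2$ below $\ell_\Omega$ for nearly circular domains), one has $\mathrm{SingSupp}\,w_\Omega \cap (0,\ell_\Omega) \subset \bigcup_{q\geq 2}\mathcal{L}_{1,q}(\Omega)$, while Theorem \ref{length spec = singsupp} supplies the reverse inclusion for $q\geq 2$. Since $\ell_\Omega$ is a heat-trace invariant and the singular support of the wave trace is manifestly a spectral invariant, the union $\bigcup_{q\geq 2}\mathcal{L}_{1,q}(\Omega)$ is identified purely from the spectrum.

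Next I would invoke Lemma \ref{structure of lengths}, which provides explicit lower bounds on the gap $t_{q+1}-T_q$ that strictly dominate the cluster diameter $T_q-t_q$. The gap-counting argument used in the proof of Corollary \ref{q can be heard} then recovers each pair $(t_q(\Omega), T_q(\Omega))$ from the spectrum, by scanning $\mathrm{SingSupp}\,w_\Omega \cap (0,\ell_\Omega)$ for successive gaps of the prescribed size. In particular, any isospectral pair of nearly circular domains shares the same endpoints $[t_q,T_q]$, so a single cutoff $\hat{\chi}_q\in C_c^\infty(\R)$ meeting the conditions of \eqref{sigma} can be chosen simultaneously for both. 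Applying this common cutoff,
\begin{equation*}
\hat{\sigma}_{1,q}^{\Omega}(t) = \hat{\chi}_q(t)\,w_\Omega(t)
\end{equation*}
is the product of a spectrally-determined cutoff with a spectral invariant, hence is itself a spectral invariant. Any two admissible choices of $\hat{\chi}_q$ differ on a region where $w_\Omega$ is smooth, so the singularity expansion supplied by Theorem \ref{parametrix theorem} (valid for all $q\geq 2$ in the nearly-circular setting) is uniquely determined modulo $C^\infty(\R)$.

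The main conceptual hurdle is the first step. Without Theorem \ref{length spec = singsupp} one could not exclude the scenario in which contributions from distinct orbits of the same length in $\mathcal{L}_{1,q}(\Omega)$ cancel in the wave trace and wipe out the singularity, rendering that cluster invisible to the spectrum; this is exactly the cancellation caveat flagged immediately after the statement of Proposition \ref{MMSPINV} in the introduction. Once Theorem \ref{length spec = singsupp} is in hand, the remaining gap-counting and cutoff arguments are essentially bookkeeping on top of the already-established parametrix of Theorem \ref{parametrix theorem}.
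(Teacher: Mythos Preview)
Your argument is correct and follows essentially the same route as the paper: disjointness of the clusters $\mathcal{L}_{1,q}$ (Lemma \ref{structure of lengths}, Lemma \ref{winding twice or more}) together with the gap-counting of Corollary \ref{q can be heard} lets one pick a spectrally determined cutoff $\hat{\chi}_q$, so that $\hat{\sigma}_{1,q}=\hat{\chi}_q w_\Omega$ is spectral modulo $C^\infty$. If anything you are slightly more explicit than the paper in one respect: the paper's proof cites Corollary \ref{q can be heard} for the disjointness and the cluster identification, but that corollary is stated for domains with the same \emph{length} spectrum, and you correctly spell out that Theorem \ref{length spec = singsupp} (equivalently Corollary \ref{big cor}) is what converts equality of Laplace spectra into equality of $\bigcup_{q\ge 2}\mathcal{L}_{1,q}$ via the singular support of the wave trace, after which the gap-counting applies.
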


The key ingredient in the proof is Corollary \ref{q can be heard}.  The wave trace on the open interval $(0, |\partial \Omega|) = (0, \ell)$  has the following {\it q-bounce} decomposition, introduced in \cite{MM}:
\begin{equation} \hat{\sigma}(t) |_{(0, \ell)}= \sum_{q \geq 2} \hat{\sigma}_{1, q}(t).\end{equation}
By Corollary \ref{q can be heard}, $\lcal_{1,q} \cap \lcal_{1, q'} =\emptyset
$ if $q \not= q'$ and $\lcal_{1,q} \cap \lcal_{p,q'} = \emptyset$ if $p \geq 2$.
Hence, given a singularity $L$ of the wave trace in the interval $(0, \ell)$, we know exactly to which interval $[t_q, T_q]$ it belongs to, in other words the number of bounces $q$ of $L$ can be heard. Thus $\hat{\sigma}_{1, q}$ is a spectral invariant, proving Proposition
\ref{MMSPINV}.

\begin{rema} Proposition \ref{MMSPINVb} does not rule out that there might exist 
two distinct  $(1, q)$ orbits of $\Omega$ of the same length. It also does
not imply that the set of lengths in $\lcal_{1, q}$ is finite, nor that  the corresponding fixed point sets are clean, nor in the clean case
 that the individual terms $e_{\gamma}(t)$ in \eqref{e gamma} are spectral invariants.  
If $\Omega$ is isospectral to $E$, it says that $\hat{\sigma}_{1,q}^{\Omega}(t) = \hat{\sigma}_{1,q}^E(t)$. It is not apriori clear that the fixed point sets of the
billiard map of $\Omega$ must be clean, nor that $\mathcal L (\Omega) = \mathcal L (E)$, since cancellations may occur in the sums.  \end{rema}

		\section{Length spectrum and wave trace: Proof of Theorem
		\ref{length spec = singsupp} }
		
		For inverse spectral problems it is important to know which lengths are in the singular support of the wave trace. The following proposition was proved in \cite{MM} for any smooth strictly convex domain (satisfying a non-coincidence condition) but only for $q$ sufficiently large. For  nearly circular domains, we improve their result by showing that it holds for all $q \geq 2$. 
   \begin{prop}\label{tT in singsupp} If $\Omega$ is nearly circular in $C^8$, then for all $q \geq 2$ we have
   		$$ \{t_q(\Omega), T_q (\Omega)\} \subset \text{SingSupp}\, w_\Omega(t).$$
   \end{prop}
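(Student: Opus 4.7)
The plan is to use the Marvizi-Melrose parametrix \eqref{trace parametrix} (valid for all $q \geq 2$ by Theorem \ref{parametrix theorem}) together with Corollary \ref{q can be heard} to reduce the claim to an assertion about singularities of a single oscillatory integral, and then apply Soga's theorem on oscillatory integrals with (possibly degenerate) phase to read off the two extreme singularities from the range of $L_q$.

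First, I would fix $q \geq 2$ and choose the cutoff $\hat\chi_q$ as in \eqref{sigma} so that $\hat\sigma_{1,q}(t) = \hat\chi_q(t) w_\Omega(t)$ agrees with the wave trace on a neighborhood of $[t_q, T_q]$ and differs from $w_\Omega$ only by a smooth piece plus terms supported away from this interval. By Lemma \ref{structure of lengths} and Corollary \ref{q can be heard}, no length from $\mathcal{L}_{1,m}$ with $m \neq q$ nor from $\mathcal{L}_{p,m}$ with $p \geq 2$ lies in $\supp \hat\chi_q$, so
\[
\text{SingSupp}\,\hat\sigma_{1,q} \cap [t_q, T_q] \;=\; \text{SingSupp}\,w_\Omega \cap [t_q, T_q].
\]
It therefore suffices to show that $\{t_q, T_q\}$ lies in the singular support of the oscillatory integral
\[
I_q(t) \;=\; \int_0^\infty \int_{\partial \Omega} \Re\!\left(e^{i\pi r_q/4}\, e^{i\xi(t - L_q(s))}\, \xi^{1/2}\, a(q,t,s,\xi)\right) ds\, d\xi.
\]

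Next, I would analyze $I_q(t)$ near $t = T_q$. Stationary phase in $s$ pins contributions to the critical set $\{s : L_q'(s) = 0\}$, and further stationary phase in $(t,\xi)$ localizes at values $t = L_q(s)$ for $s$ critical. Thus only the global maxima $\{s : L_q(s) = T_q\}$ can produce a singularity at $t = T_q$, and analogously the global minima at $t = t_q$. By Lemma \ref{critical points} this set is nonempty. The essential point is that the principal symbol $a_0(q,s)$ is strictly positive on $\partial \Omega$, so that all contributions from critical points at height $T_q$ carry the same sign and cannot cancel at leading order; this is what distinguishes the endpoints of $[t_q, T_q]$ from interior critical values where cancellation is a priori possible.

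To make this rigorous for possibly degenerate maximum/minimum sets, I would invoke the theorem of Soga \cite{So} which treats oscillatory integrals of exactly the form $\int e^{i\xi(t - L(s))} \xi^{1/2} a\, ds\, d\xi$ with $a > 0$: the conclusion is that the extremal values $\min L$ and $\max L$ are always in the singular support, regardless of whether the extremum is attained at a point, a finite set, or a submanifold, and regardless of the order of vanishing of $L - T_q$ at the maximum set. Applying this with $L = L_q$ on $\partial\Omega$ gives the desired conclusion for $T_q$ and (by reflecting $\xi \to -\xi$ or using the minimum version) for $t_q$.

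The main obstacle is precisely the possible degeneracy and non-isolatedness of the extremum set of $L_q$: for a generic nearly circular $\Omega$ the maxima are non-degenerate points and elementary stationary phase would suffice, but for the ellipse itself $L_q$ is constant (every point is both a maximum and a minimum), and for a general competing domain we cannot rule out whole curves of extrema or higher-order flatness. This is exactly the situation that Soga's theorem is designed to handle, and the positivity of $a_0(q,s)$, inherited from the Marvizi-Melrose construction, is what allows us to apply it without worrying about destructive interference. With Soga's theorem in hand the proof reduces to verifying its hypotheses for the parametrix \eqref{trace parametrix}, which follows from Theorem \ref{parametrix theorem} and the smoothness of $L_q$ on $\partial\Omega$ proved in Section 3.
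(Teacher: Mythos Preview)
Your strategy is the same as the paper's --- use the parametrix from Theorem \ref{parametrix theorem}, isolate the $(1,q)$ contribution via Lemma \ref{structure of lengths}, and invoke Soga for the possibly degenerate case --- but one key technical step is mangled. Soga's theorem (Theorem \ref{Soga} in the paper) is \emph{not} about distributions of the form $\int e^{i\xi(t-L(s))}\xi^{1/2}a\,ds\,d\xi$; it is about one-dimensional oscillatory integrals $I(\lambda)=\int e^{i\lambda\phi(x)}a(x)\,dx$ with $a\geq 0$, and the conclusion is that $I(\lambda)\neq O(\lambda^{-\infty})$. You therefore cannot apply it directly to $I_q(t)$. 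The paper bridges this gap by arguing by contradiction: localize near the putative non-singular length $t_0$ with a cutoff $\rho_q$, take the inverse Fourier transform in $t$ (so that $\rho_q\hat\sigma_{1,q}$ being smooth forces rapid decay in $\lambda$), and then perform stationary phase in the $(t,\xi)$ variables to reduce to $\lambda^{1/2}\int_{\partial\Omega}e^{i\lambda L_q(s)}a_0(q,s)\rho_q(L_q(s))\,ds$. Only at this stage does Soga's theorem apply.

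Two smaller remarks. First, the paper in fact proves the stronger Theorem \ref{length spec = singsupp} (every $(1,q)$ length is singular, not just $t_q$ and $T_q$), by the same argument; the restriction to extremal values buys nothing. Second, your observation that at $T_q$ all non-degenerate critical points share the sign of $L_q''$ is correct but unnecessary: the paper notes that a sum $\sum_j e^{i\frac{\pi}{4}\operatorname{sign}L_q''(s_j)}\frac{a_0(q,s_j)}{\sqrt{|L_q''(s_j)|}}$ with $a_0>0$ can never vanish regardless of the signs, since $e^{i\pi/4}$ and $e^{-i\pi/4}$ are $\R$-linearly independent. So the non-degenerate case is handled uniformly at every critical value, not just the extremal ones.
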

   
In fact, we prove the stronger Theorem
		\ref{length spec = singsupp}. 
		
	\begin{rema} For a smooth strictly convex domain $\Omega$ that is not necessarily nearly circular, if we add the assumption that 
			\begin{equation} \label{non nearly circular}\text{$|\d \Omega|$ is not a limit point from below of the set $\bigcup_{p \geq 2, q \geq 2} \mathcal L_{p, q}(\Omega)$}, \end{equation} 
			then there exists $q_0(\Omega)$ sufficiently large such that:
			\begin{equation} \label{q large poisson relation} \forall q \geq q_0(\Omega): \qquad   \mathcal L_{1, q} \subset \text{SingSupp}\, w_\Omega(t). \end{equation}
		
	Although formally speaking \cite{MM} only proves the inclusion  $\{ t_q, T_q \} \subset \text{SingSupp}\, w_\Omega(t)$, but essentially the same argument follows to show \eqref{q large poisson relation}. As shown in \cite{MM}, condition \eqref{non nearly circular} holds for a generic class of smooth strictly convex domains, hence the inclusion \eqref{q large poisson relation} holds generically. 
		
			\end{rema}

\begin{proof}  We argue by contradiction. Fix $q \geq 2$.  To prove Theorem \ref{length spec = singsupp},  assume $t_0 $ belongs to $ \mathcal{L}_{1, q}(\Omega)$ but not to the singular support of $w_\Omega(t)$. Then, there is an open interval $J_1$ near $t_0$ such that $\hat{\sigma_{1,q}}(t)$ is smooth in $J_1$. We then choose any non-negative cutoff function $\rho_q(t)$ supported in $J_1$, which is positive exactly on a proper open subinterval $J_2$ of $J_1$ containing $t_0$. In particular,  $\rho_q(t_0) >0$.  In addition we assume that the boundary points of $\supp \rho_q(t)=\bar{J_2}$ are not critical values of $L_q(s)$. This can be done because  by Sard's theorem the  set of critical values  of $L_q(s)$ has measure zero.  By our assumption on $t_0$, the inverse Fourier transform of  $\rho_q(t)\hat{\sigma_{1,q}}(t)$ must be rapidly decaying. We will see that this  leads us to a contradiction via the following theorem of Soga:
	\begin{theo}[Soga \cite{So}] \label{Soga} Consider an oscillatory integral 
		$$I(\lambda) = \int_\R e^{i \lambda \phi(x)} a(x) dx, \quad \lambda \geq 1,$$
		where $\phi(x)$ and $a(x)$ are smooth and $a(x)$ is compactly supported. Furthermore, assume that $a(x) \geq 0$ for all $x$,  and $a(x) >0$ for at least one degenerate critical point of $\phi(x)$. Then $ I(\lambda) \neq \cO(\lambda^{-\infty})$. In fact $ \lambda^{m}I(\lambda)$ is not in $L^2 (1, +\infty)$ for some $m < \frac12$.  
		\end{theo}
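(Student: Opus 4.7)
The plan is to extract the leading asymptotic of $I(\lambda)$ by isolating the contribution near the degenerate critical point, where the decay is strictly slower than $\lambda^{-1/2}$. All other contributions are either $\cO(\lambda^{-N})$ for every $N$ (non-stationary phase) or $\cO(\lambda^{-1/2})$ (classical one-dimensional stationary phase), so the degenerate point dominates, and both conclusions follow.

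First I would use a smooth partition of unity to split $a = a_0 + a_1 + a_{\mathrm{nc}}$, with $a_0$ supported in a small neighborhood $U_0$ of the distinguished degenerate critical point $x_0$ on which no other critical point of $\phi$ lies, $a_1$ supported in small disjoint neighborhoods of the remaining critical points (which we may assume non-degenerate after shrinking $U_0$, or else we simply pick $x_0$ to be whichever degenerate critical point is singled out by the hypothesis and treat every other critical point the same way with a worse exponent), and $a_{\mathrm{nc}}$ supported away from all critical points. On the support of $a_{\mathrm{nc}}$, repeated integration by parts with the operator $L = \frac{1}{i\lambda\phi'}\partial_x$ gives an $\cO(\lambda^{-N})$ bound for every $N$. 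On the support of $a_1$, classical one-variable stationary phase produces, at each non-degenerate critical point, an asymptotic expansion $\sum_j c_j \lambda^{-1/2-j}$, hence a total contribution of size $\cO(\lambda^{-1/2})$.

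For the local piece near $x_0$, assume first that $x_0$ is of finite type: $\phi^{(2)}(x_0) = \cdots = \phi^{(k-1)}(x_0) = 0$ and $\phi^{(k)}(x_0) \ne 0$ for some $k \ge 3$. A one-dimensional Morse-type lemma (implicit function theorem applied to the equation $\phi(x) - \phi(x_0) = \epsilon \, y^k$) furnishes a local smooth diffeomorphism $y = y(x)$ with $y(x_0) = 0$ and $y'(x_0) \ne 0$ realizing this equality, with $\epsilon = \pm 1$. Changing variables and rescaling $u = \lambda^{1/k} y$ reduces the local contribution to
\[
 e^{i\lambda\phi(x_0)} \lambda^{-1/k} \int_{\R} e^{i\epsilon u^k} b\!\left(u/\lambda^{1/k}\right) du,
\]
where $b(y) = a_0(x(y))\,|x'(y)| \ge 0$ is smooth and compactly supported with $b(0) = a(x_0)\,|x'(0)| > 0$. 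An integration by parts on $|u| \ge 1$ bounds the integrand uniformly in $\lambda$ by a fixed $L^1$ function of $u$ with decay $|u|^{1-k}$, so dominated convergence gives
\[
 \lim_{\lambda \to \infty} \int_{\R} e^{i\epsilon u^k} b\!\left(u/\lambda^{1/k}\right) du = C_{k,\epsilon}\, b(0), \qquad C_{k,\epsilon} := \int_{\R} e^{i\epsilon u^k} du,
\]
and the generalized Fresnel constant $C_{k,\epsilon}$ is explicitly non-zero (a classical computation in terms of $\Gamma(1/k)$ and $\cos(\pi/(2k))$).

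Combining the three contributions, $I(\lambda) = C_{k,\epsilon}\, b(0)\, e^{i\lambda\phi(x_0)}\, \lambda^{-1/k} + \cO(\lambda^{-1/2})$, so $|I(\lambda)| \sim |C_{k,\epsilon}\, b(0)|\, \lambda^{-1/k}$ for large $\lambda$; in particular $I(\lambda) \ne \cO(\lambda^{-\infty})$. For the sharper $L^2$ statement, any $m \in [\,1/2 - 1/k,\, 1/2)$ makes $|\lambda^m I(\lambda)|^2 \sim c\, \lambda^{2m - 2/k}$ with $2m - 2/k \ge -1$, so the function is not in $L^1(1,\infty)$ and hence $\lambda^m I(\lambda) \notin L^2(1,\infty)$. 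The main obstacle is the normal-form reduction near $x_0$ together with the non-vanishing of $C_{k,\epsilon}$; once these are in hand, the dominance argument is automatic from $1/k < 1/2$. A separate ad hoc argument handles the infinite-order flat case (all derivatives of $\phi$ at $x_0$ vanish), where no $y^k$ normal form is available; there one shows that a uniform integration by parts near $x_0$ cannot be performed even once, which forces $I(\lambda)$ to decay no faster than a rate governed by the flatness and in particular slower than $\lambda^{-1/2}$.
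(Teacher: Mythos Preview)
The paper does not prove this theorem itself---it is quoted from \cite{So}---but the argument for the ``refined lemma'' in the final subsection reproduces Soga's method, and that is what your attempt should be compared against.

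Your approach has a genuine gap. The partition of unity isolating $x_0$, followed by the normal form $\phi-\phi(x_0)=\pm y^k$, presupposes that the critical points of $\phi$ are \emph{isolated} and that $x_0$ is of \emph{finite type}. Neither is part of the hypothesis: $\phi$ may have a Cantor set of critical points, or be constant on an interval, or have all derivatives vanishing at $x_0$. In any of these situations your decomposition or your normal form fails outright, and the ``separate ad hoc argument'' you invoke for the flat case is not an argument---the entire difficulty is to produce a \emph{lower} bound on $|I(\lambda)|$, and the inability to integrate by parts yields only the absence of an easy upper bound. Even in the isolated finite-type case, if several degenerate critical points share the maximal order and the same critical value, you must still rule out cancellation among their leading contributions; this can be done using $a\ge 0$ and the explicit phases of the Fresnel constants, but you have not carried it out.

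Soga's method, as reproduced in the paper, sidesteps all of this by using the positivity of $a$ globally rather than locally. One writes $I(\lambda)=i\lambda\,\widehat g(\lambda)$ modulo a rapidly decaying term, where $g(t)=\psi(t)\int_{\{\phi\le t\}} a(x)\,dx$. If $\lambda^m I(\lambda)\in L^2(1,\infty)$ for every $m<\tfrac12$, then $g$ lies in $C^{0,\alpha}$ for some $\alpha>\tfrac12$. But at the critical value $t_0=\phi(x_0)$ one has
\[
g(t_0+h)-g(t_0-h)\ =\ \int_{\{|\phi-t_0|\le h\}} a\ \ge\ \int_{\{|x-x_0|<\sqrt{h/c}\}} a\ \gtrsim\ \sqrt{h},
\]
since $\phi'(x_0)=0$ forces $|\phi(x)-t_0|\le c\,|x-x_0|^2$ near $x_0$ and $a(x_0)>0$. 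The crucial point is that $a\ge 0$ makes every other contribution to $g(t_0+h)-g(t_0-h)$ nonnegative, so nothing can cancel this $\sqrt{h}$ lower bound, no matter how complicated the rest of the critical set is. That is where the positivity hypothesis does the real work, and it is precisely what a local stationary-phase expansion cannot capture.
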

	
	To exploit this result, let us compute the inverse Fourier transform of $ \rho_q(t)\hat{\sigma}_{1,q}(t)$ using the parametrix \eqref{trace parametrix} as follows:
	\begin{align*}I_q(\lambda) = &  \int_\R \rho_q(t)\hat{\sigma}_{1,q}(t) e^{i \lambda t} \, dt + \cO(\lambda^{-\infty}) \\
	 =& \frac{\lambda^{3/2}}{2} \int_\R \int_0^\infty \int_{\d \Omega} e^{i \lambda (t +\xi(t - L_q(s)) + \frac{i \pi r_q}{4}} a(q, t, s, \lambda \xi) \, \xi^{\frac12} \rho_q(t)  ds d \xi dt  \\
	 & + \frac{\lambda^{3/2}}{2} \int_\R \int_0^\infty \int_{\d \Omega}  e^{i \lambda (t - \xi(t - L_q(s))- \frac{i \pi r_q}{4}}  \bar{a}(q, t, s, \lambda \xi) \, \xi^{\frac12} \rho_q(t)  ds d \xi dt  \\
	  \qquad & + \cO(\lambda^{-\infty}).\end{align*} 
	  We perform the stationary phase lemma in the $d\xi dt$ integral. We have two phase functions, namely 
	  $$ \Phi_1(t, s, \xi) = t +\xi(t - L_q(s)), \quad  \Phi_2(t, s, \xi) = t -\xi(t - L_q(s)).$$
	  Since the critical point of $\Phi_1$ is  given by $\xi =-1$ and $t =L_q(s)$, the first integral must be rapidly decaying as $\xi =-1$ is not in the domain of the integral. The critical points of $\Phi_2$ are given by $\xi =1$ and $t = L_q(s)$, therefore by the stationary phase lemma we get
	  $$I_q(\lambda) = \pi \lambda^{1/2} e^{\frac{-i\pi r_q}{4}} \int_{\d \Omega} e^{i \lambda L_q(s)}  a_0 (q, s) \rho_q(L_q(s))  ds + \cO (\lambda^{-1/2}). $$
	  We know by Lemma \ref{critical points}, that the critical points of $L_q(s)$ correspond to the $(1, q)$ periodic orbits of $\beta$. Let $s_0$ be  a critical point of $L_q(s)$ with $L_q(s_0)=t_0$. If $s_0$ is degenerate then by Theorem \ref{Soga}, we get a contradiction because $\lambda^{m-1} \in L^2(1, \infty)$ for $m <\frac12$. Now suppose $s_0$ is non-degenerate. The only remaining case is when all the periodic orbits whose  lengths are in the support of $\rho(t)$, are also non-degenerate and are finite. This is because if any such critical point is degenerate, we get a contradiction by the same argument as above. Also, if there are infinitely many such non-degenerate critical points, they must accumulate at a degenerate critical point. This accumulation point cannot be in the interior of the support of $\rho_q(t)$ or we would get a contradiction again, so it must be on its boundary. But we chose $\rho_q(t)$ so that the boundary points of its support are not critical values of $L_q(s)$.   Finally suppose the set of critical points of $L_q(s)$, whose corresponding critical values are in $\supp \rho_q(t)$, is finite and consists of non-degenerate orbits (hence each must be a local max or a local min).  We shrink the support of $\rho_q(t)$ so it contains only the critical value $t_0$. We shall use $\{s_0, \dots s_r\}$ for the set of critical points of $L_q(s)$ with critical value $t_0$. By the stationary phase lemma, we get
	  $$ I_q(\lambda) = \sqrt{2} \pi^{3/2} e^{-\frac{i\pi r_q}{4} +i\lambda t_0} \rho_q(t_0) \sum_{j=0}^r  e^{ i \frac{\pi}{4}\text{sign}( L''_q(s_j))} \frac{a_0(q, s_j)}{\sqrt{| L''_q(s_j) |}} + \cO(\lambda^{-1/2}). $$
	  Since $\text{sign}( L''_q(s_j))=1$ or $-1$, and $a_0(q, s) >0$, the sum cannot cancel to zero. 
	  
	 \end{proof}
 Before we present the proof of the main theorem, we state a key corollary of Theorem \ref{length spec = singsupp}. 
 \begin{coro} \label{big cor} For nearly circular domains in $C^8$, one has
 	\begin{equation} \label{q>=2} \text{SingSupp}\, w_\Omega(t) \cap (0, |\d \Omega| ) = \bigcup_{q\geq 2} \mathcal{L}_{1, q}(\Omega), \end{equation}
 	and \begin{equation} \label{q>=3}\text{SingSupp}\, w_\Omega(t) \cap (5, \, |\d \Omega| ) = \bigcup_{q\geq 3} \mathcal{L}_{1, q}(\Omega). \end{equation}
 \end{coro}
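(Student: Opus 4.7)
The plan is to read this corollary as a direct packaging of three earlier results: the Andersson-Melrose inclusion \eqref{AM}, Lemma \ref{winding twice or more}, and Theorem \ref{length spec = singsupp}. For \eqref{q>=2}, the inclusion $\supset$ is exactly the content of Theorem \ref{length spec = singsupp}, which identifies every $L \in \mathcal{L}_{1,q}(\Omega)$ with $q \geq 2$ as a singular point of $w_\Omega$. For the reverse inclusion, I would invoke \eqref{AM} to place $\text{SingSupp}\, w_\Omega(t) \cap (0,\ell)$ inside $\mathcal{L}(\Omega)\cap (0,\ell)$, and then apply Lemma \ref{winding twice or more} to replace the latter by $\bigcup_{q \geq 2} \mathcal{L}_{1,q}(\Omega)$.

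To deduce \eqref{q>=3} from \eqref{q>=2}, it suffices to check that $\mathcal{L}_{1,2}(\Omega)\cap(5,\ell) = \emptyset$, so that the $q=2$ piece is excluded on $(5,\ell)$. From Lemma \ref{T-t} applied with $q=2$, the two-loop function satisfies
\[
L_2(\tau,s) \;=\; 4\sin(\pi/2) + \cO(\|f\|_{C^6}) \;=\; 4 + \cO(\|f\|_{C^6}),
\]
so every $(1,2)$-length lies in a small neighbourhood of $4$, which is uniformly below $5$ once $\|f\|_{C^6}$ is sufficiently small. Combining this with \eqref{q>=2} then yields \eqref{q>=3}.

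The one subtlety I would flag is that $\mathcal{L}(\Omega)$ is defined as the \emph{closure} of the periodic lengths, whereas the right-hand sides of both identities are plain unions. This is compatible with Lemma \ref{winding twice or more} because, by the Marvizi-Melrose asymptotic $T_{q} \to \ell$ (see Lemma \ref{MM}), the only accumulation point of the $(1,q)$ lengths lies at $\ell$, which sits on the boundary of our interval rather than in its interior; hence no new accumulation lengths appear inside $(0,\ell)$. Given this observation, there is no serious obstacle to overcome beyond assembling the pieces already in place.
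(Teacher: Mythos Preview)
Your argument for \eqref{q>=2} is exactly the paper's: the inclusion $\supset$ is Theorem~\ref{length spec = singsupp}, and the inclusion $\subset$ combines \eqref{AM} with Lemma~\ref{winding twice or more}. Your remark about the closure in the definition of $\mathcal{L}(\Omega)$ versus the plain union on the right is a nice clarification that the paper leaves implicit.

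For \eqref{q>=3} there is a small omission. Showing only $\mathcal{L}_{1,2}(\Omega)\cap(5,\ell)=\emptyset$ yields, after intersecting \eqref{q>=2} with $(5,\ell)$,
\[
\text{SingSupp}\, w_\Omega(t)\cap(5,\ell)=\Big(\bigcup_{q\geq 3}\mathcal{L}_{1,q}(\Omega)\Big)\cap(5,\ell),
\]
which is not literally \eqref{q>=3} unless you also know $\bigcup_{q\geq 3}\mathcal{L}_{1,q}(\Omega)\subset(5,\ell)$, i.e.\ $t_q(\Omega)>5$ for every $q\geq 3$. The paper handles this symmetrically to your $T_2$ estimate: by the same Lemma~\ref{T-t}, $t_3(\Omega)=3\sqrt{3}+\cO(\|f\|)>5$ for $\|f\|$ small, and since $2q\sin(\pi/q)$ is increasing in $q$ (or by Lemma~\ref{structure of lengths}) the same bound holds for all $q\geq 3$. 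With this one line added, your proof coincides with the paper's.
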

\begin{proof} The first statement follows from \eqref{AM}, Theorem \ref{length spec = singsupp}, and Lemma \ref{winding twice or more}. 
	To show the second statement, we note that by Lemma \ref{T-t}, we have
	$$ L_q(s) = 2q \sin(\pi/q) +  \cO(\| f\|_{C^2}).$$
	Therefore,
	$$ T_2(\Omega) = 4 + \cO(\| f\|_{C^2}), \quad t_3(\Omega) = 3 \sqrt{3} + \cO(\| f\|_{C^2}).$$
	Clearly if we choose $\cO(\| f\|_{C^2})$ sufficiently small, we have  $ T_2 (\Omega)< 5 < t_3(\Omega)$. Then, \eqref{q>=3} follows from this and \eqref{q>=2}. 
	
	\end{proof}

   \section{Proof of the main theorem}
  Suppose $\Omega$ is a smooth domain, whose $\Delta$ spectrum with respect to Dirichlet (or Neumann) boundary condition is identical with the one of an ellipse  $E_\ep$ of eccentricity $\ep < \ep_0$.  By Lemma \ref{close} and Corollary \ref{close 2}, there is a rigid motion after which $\Omega$ is $C^{n}$ for every $n \in \mathbb N$. Let us denote the wave traces of $\Omega$ and $E_\ep$ by $w_\Omega(t)$ and $w_{E_\ep}(t)$, respectively.  Since $\Omega$ and $E_\ep$ are isospectral, we must in particular have 
   $$ \text{Sing Supp}\; w_\Omega(t) \cap (0, \ell_\ep) =  \text{Sing Supp} \; w_{E_\ep}(t) \cap (0, \ell_\ep).$$
   Here $\ell_\ep$ is the length of $E_\ep$ which equals the length of $\Omega$ by the known fact that the perimeter of a domain is a spectral invariant. By Corollary \ref{big cor}, for $\ep$ sufficiently small, we must have 
  $$ \bigcup_{q \geq 3} \mathcal{L}_{1,q}(\Omega) =  \bigcup_{q \geq 3} \mathcal{L}_{1,q}(E_\ep).$$
  As we saw in \eqref{length spectrum of ellipse}, for an ellipse $E_\ep$ we have
  $$ \bigcup_{q \geq 3} \mathcal{L}_{1,q}(E_\ep) = \{ T_3(\ep) < T_4(\ep) < \cdots < T_q(\ep) < T_{q+1}(\ep) <  \cdots \}, $$
  which is a monotonically increasing sequence converging to $\ell_\ep$ whose gaps sequence $T_{q+1}(\ep) - T_q(\ep)$ is monotonically decreasing. Thus $ \bigcup_{q \geq 3} \mathcal{L}_{1,q}(\Omega)$ must be a sequence with the same properties. We claim that this implies that for all $q \geq 3$, we have $T_q(\Omega) = t_q(\Omega)$. Assume not. So for some $q \geq 3$, $t_q(\Omega) \neq T_q(\Omega)$, i.e. the the $q$-loop function $L_q(s)$ is not a constant. Then by Lemma \ref{structure of lengths}, if $q \geq q_0$, we have 
  $$ T_q(\Omega) - t_q(\Omega) \leq  \frac{1}{100(q+1)^3} <  \frac{1}{10(q+1)^3} \leq t_{q+1}(\Omega) - T_q(\Omega), $$
  and if $q<q_0$ we have
  $$ T_q (\Omega)- t_q(\Omega) \leq  \frac{\eta_0}{100} <  \frac{\eta_0}{10} \leq t_{q+1}(\Omega) - T_q(\Omega). $$ Either way, we get 
  $$ 0<T_q(\Omega) -t_q(\Omega) < t_{q+1}(\Omega) - T_q(\Omega).$$ This shows that the sequence of gaps of $ \bigcup_{q \geq 3} \mathcal{L}_{1,q}(\Omega)$  fluctuates and is not decreasing, thus a contradiction. Therefore, for all $q \geq 3$ we must have $L_q(s)$ is a constant function of $s$, or equivalently there is a smooth convex  caustic $\Gamma_q$ of rotation number $1/q$ consisting of $(1, q)$ periodic orbits. In fact the caustic $\Gamma_q$ in the phase space $\Pi$ of $\Omega$ is given by 
  $$\Gamma_q = \{ (s, \phi_q(s))\}, $$
  where $\phi_q(s)$ is the $q$-loop angle defined by Theorem \ref{loop angle}. In the language of \cite{ADK}, this precisely means that $\Omega$ is rationally integrable. The following dynamical theorem of \cite{ADK} is the final major step in our argument. 
  \begin{theo}[Avila, De Simoi, and Kaloshin]
  	Let $\Omega$ be a $C^{39}$ smooth domain that is rationally integrable and is $C^{39}$ sufficiently close to the unit disk. Then $\Omega$ is an ellipse.
   \end{theo}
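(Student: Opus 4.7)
The plan is to prove the result by a deformation argument. Since $\Omega$ is $C^{39}$-close to the unit disk $D$, write $\partial \Omega = \partial D + f N_0$ with $\|f\|_{C^{39}}$ small, and connect $D$ to $\Omega$ by the linear deformation $\partial \Omega_\tau = \partial D + \tau f N_0$ introduced earlier in this paper. The aim is to show that rational integrability along this one-parameter family forces $f$ to be (up to rigid motions) the normal variation of an affine transformation of $D$, i.e.\ the first-order data of an ellipse. Equivalently, one wants an infinitesimal rigidity statement: the kernel of the linearized rational-integrability conditions at the disk is exactly the tangent space to the orbit of ellipses modulo isometries.

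The main tool I would use is Lazutkin coordinates near the boundary $\phi = 0$ of the phase space, in which the billiard map of $D$ is an exact rotation and the map of $\Omega_\tau$ admits a perturbative expansion (of the type already proved in Lemma \ref{perturbed beta}). The rational integrability hypothesis says that for every $q \geq 3$ there is a caustic of rotation number $1/q$, which by Lemma \ref{critical points} is equivalent to $L_q(\tau, s)$ being constant in $s$ for all $q \geq 3$. Differentiating in $\tau$ and using the explicit Melnikov formula of Lemma \ref{Melnikov 2}, one obtains, at each order in $\tau$ and each $q$, a linear constraint on $f$ of the form
\begin{equation*}
\sum_{j=0}^{q-1} n\bigl(\tau, s_j(\tau, s)\bigr) \sin \vartheta_j(\tau, s) = \text{const in } s,
\end{equation*}
which at $\tau = 0$ reduces to a sum of translates of $f$ weighted by $\sin(j\pi/q)$. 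Expanding $f$ in Fourier series on $\partial D$, each such constraint picks out specific Fourier modes, and the whole family $\{q \geq 3\}$ can be analyzed to determine which modes of $f$ survive.

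The hard part, and the true content of \cite{ADK}, is to show that the only Fourier modes of $f$ left unconstrained by the full collection $\{q \geq 3\}$ are precisely the $e^{\pm 2 i \theta}$ modes (together with the rigid-motion modes $1, e^{\pm i\theta}$), which corresponds exactly to deformations of $D$ into ellipses. This injectivity of the linearized operator has to be upgraded to a finite, nonlinear statement: even if every infinitesimal rationally integrable deformation of $D$ is an ellipse-deformation, one still has to rule out that the nonlinear problem admits exotic solutions. The passage from infinitesimal rigidity to a true local uniqueness statement is where the high regularity $C^{39}$ and a Nash--Moser / KAM-type iteration enter, and this is the step I expect to be by far the most delicate; the Fourier-analytic injectivity, by contrast, is a computation once the right normal form is in place. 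With local uniqueness established, any such $\Omega$ must agree with an ellipse, completing the proof.
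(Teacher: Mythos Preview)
The paper does not prove this theorem; it is quoted verbatim as a result of Avila--De Simoi--Kaloshin \cite{ADK} and used as a black box in the final step of the proof of the main theorem. There is therefore no ``paper's own proof'' to compare your proposal against.

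Your outline is a plausible high-level summary of the strategy in \cite{ADK}: linearize the rational-integrability constraints at the disk, expand in Fourier modes, show the kernel consists only of the ellipse and rigid-motion modes, and then upgrade to a nonlinear statement. A few caveats if you intend to actually carry this out. First, the Melnikov identity at $\tau=0$ that you wrote down gives, for each $q$, the condition that $\sum_{j=0}^{q-1} f(s + 2\pi j/q)$ is constant in $s$; this kills all Fourier modes $e^{ik\theta}$ with $k \not\equiv 0 \pmod q$, but for each fixed $q$ the modes with $q \mid k$ survive. One needs the full family $q \geq 3$ together with higher-order terms in the perturbative expansion (not just the first Melnikov function) to kill all modes $|k| \geq 3$; this is the genuinely delicate combinatorial and analytic core of \cite{ADK}. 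Second, the passage from infinitesimal to finite in \cite{ADK} is not a Nash--Moser iteration but a more direct bootstrap using the structure of the action-angle coordinates and quantitative estimates on the Fourier coefficients; the $C^{39}$ regularity enters through losses of derivatives in these estimates rather than through a smoothing step. So your sketch captures the overall architecture but underestimates the difficulty of the Fourier-mode elimination and mischaracterizes the mechanism that closes the argument.
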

Hence if we choose $n=39$, we obtain that $\Omega$ must be an ellipse. By Proposition \ref{tT in singsupp}, we know that for sufficiently small $\ep$, the lengths $t_2$ and $T_2$ are spectral invariants. For an ellipse these correspond to the bouncing ball orbits on the minor and major axes, respectively, thus $\Omega$ and $E_\ep$ must be isometric.  This concludes the proof of our main theorem.

   \subsection{Second proof of the Theorem \ref{MAINTHEO2}}
Assume that
 $\Omega$ is nearly circular in $C^8$ and  that it is isospectral to an ellipse 
$E_\ep$ of small eccentricity. We recall from Proposition \ref{MMSPINVb} that 
$$ \hat \sigma_{1,q}^\Omega(t) =  \hat \sigma_{1,q}^{E_\ep}(t). $$
We remember from \eqref{sigma} that $\hat \sigma_{1,q}(t) = \hat \chi_q(t) w(t)$, where $\hat \chi_q(t)$ is a cutoff supported near $[t_q, T_q]$ and equals one there. 
Taking Fourier transform of this equation and inserting the Marvizi-Melrose parametrix we get for $\lambda >0$
$$\int_0^{\ell_\ep} e^{ i \lambda L_q(s)}
a(q, s, \lambda) d s = \int_0^{\ell_\ep} e^{ i \lambda L_q^{E_\ep}(s)}
a^{E_\ep}(q, s, \lambda) d s +\cO(\lambda^{-\infty}), $$
where
$L_q(s)$ and $L_q^{E_\ep}(s)$ are the $q$-length functions of $\Omega$ and $E_\ep$, respectively, and $a(q, s, \lambda)$ and $a^{E_\ep}(q, s, \lambda)$ are the corresponding complete
amplitudes of the trace parametrix \eqref{trace parametrix}. They are classical symbols of order zero, i.e. polyhomogneous functions
of $\lambda$ with orders descending by unit steps. We denote their
symbol expansions as $\lambda \to \infty$ as follows: 
\begin{equation} \label{SYMBOL} \left\{ \begin{array}{l} a(q, s, \lambda) \sim \sum_{j=0}^{\infty} a_j(q, s) \lambda^{-j}, \\ \\
a^{E_{\ep}} (q, s, \lambda) \sim \sum_{j=0}^{\infty} a_j^{E_{\ep}}(q, s) \lambda^{-j}. \end{array}
\right. \end{equation}
The asymptotics are the standard ones for symbols, i.e. $a - \sum_{j \leq N}a_j \in S^{-(N-1)}. $ We comment that when $\lambda < 0$ we need to replace $a(q, s, \lambda)$ and $a^{E_{\ep}} (q, s, \lambda)$ with their complex conjugates. 
Note that $L_q^{E_\ep}(s)$ is the constant $t_q(\ep)=T_q(\ep)$. Moving the
constant phase factor to the left side gives,

\begin{lemm} \label{COR} The integral,  $$b_{1,q}(\lambda): = \int_0^{\ell_\ep} e^{ i \lambda (L_q(s)- T_q(\ep))}
a(q, s, \lambda) d s  = \sum_{j=0}^ \infty \int_0^{\ell_\ep} a^{E_\ep}_j(q, s) ds \; \lambda^{-j}  + \cO(\lambda^{-\infty})$$ is a poly-homogeneous symbol of order zero, i.e. it belongs to $S^0:=S_{1,0}^0(\mathbb R)$. \end{lemm}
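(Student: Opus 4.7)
The plan is to read the Lemma directly off the isospectral identity of oscillatory integrals displayed immediately before its statement, exploiting the fact that $L_q^{E_\ep}(s)$ is constant in $s$. Specifically, since $E_\ep$ is completely integrable and (for the range of $q$ of interest, $q \geq 3$) the $q$-loop function of the ellipse satisfies $L_q^{E_\ep}(s) \equiv T_q(\ep) = t_q(\ep)$, the right-hand integral factors as
\[
\int_0^{\ell_\ep} e^{i\lambda L_q^{E_\ep}(s)} a^{E_\ep}(q,s,\lambda)\, ds \;=\; e^{i\lambda T_q(\ep)} \int_0^{\ell_\ep} a^{E_\ep}(q,s,\lambda)\, ds.
\]
I would then multiply the isospectral identity by the unimodular factor $e^{-i\lambda T_q(\ep)}$, which absorbs into the oscillating phase of the left-hand integral and produces exactly $b_{1,q}(\lambda)$. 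This yields
\[
b_{1,q}(\lambda) \;=\; \int_0^{\ell_\ep} a^{E_\ep}(q,s,\lambda)\, ds \;+\; \cO(\lambda^{-\infty}).
\]

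What remains is to verify that integration in $s$ over the compact boundary $\partial E_\ep$ preserves the polyhomogeneous symbol class $S^0$ and produces the stated coefficient-by-coefficient expansion. This is a standard property of classical symbols: the Marvizi--Melrose construction gives amplitudes $a_j^{E_\ep}(q,s) \in C^\infty(\partial E_\ep)$ and, for each $N$, a remainder bound $a^{E_\ep}(q,s,\lambda) - \sum_{j<N} a_j^{E_\ep}(q,s)\lambda^{-j} \in S^{-N}$ uniformly in $s$. Integrating in $s$ over $[0,\ell_\ep]$ then yields a remainder of order $\cO(\lambda^{-N})$, while the partial sums contribute $\sum_{j<N}\bigl(\int_0^{\ell_\ep} a_j^{E_\ep}(q,s)\,ds\bigr) \lambda^{-j}$. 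Letting $N \to \infty$ gives the asymptotic expansion claimed in the Lemma, and the sign-reversal prescription for $\lambda<0$ proceeds identically by complex conjugation.

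I do not anticipate a real obstacle; the content of the Lemma is essentially bookkeeping. The nontrivial inputs are all upstream: Proposition \ref{MMSPINV}, which promotes $\hat{\sigma}_{1,q}$ to a full spectral invariant and therefore forces the $\Omega$-side and $E_\ep$-side parametrices to agree modulo $C^\infty$, and Theorem \ref{parametrix theorem}, which supplies the Marvizi--Melrose parametrix for all $q \geq 2$ in the nearly circular regime (so that both sides of the identity have the requisite oscillatory-integral form). The only point requiring a sentence of care is uniformity of the symbol expansion \eqref{SYMBOL} in the parameter $s$, which however is automatic from the way the amplitudes $a_j^{E_\ep}(q,s)$ are constructed on $\partial E_\ep$ in \cite{MM}.
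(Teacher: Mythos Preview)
Your proposal is correct and follows exactly the same route as the paper: the paper simply remarks that $L_q^{E_\ep}(s)$ is the constant $T_q(\ep)$ and that ``moving the constant phase factor to the left side gives'' the lemma, with no further argument. Your write-up is in fact more detailed than the paper's, since you spell out why integrating the classical symbol $a^{E_\ep}(q,s,\lambda)$ in $s$ over the compact boundary preserves the $S^0$ class and yields the term-by-term expansion.
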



\begin{coro} \label{CONORMALCOR} 

The Fourier transform \begin{equation} \label{FSIGMA} \hat{b}_{1,q}(t)  = \int_{\R} \int_{S^1} e^{- i \lambda t}  e^{ i \lambda (L_q(s)- T_q(\ep))}
a(q, s, \lambda) d s d \lambda \end{equation}  of $b_{1, q}$ is  a co-normal distribution
in $I^{1/4}(\R, \{0\})$, with 
principal symbol  $\int a_0^{E_\ep}(q, s)ds$ times $|d \xi|^{\half}$ on $T_0^*\R$.
In particular, its principal symbol is strictly positive and $\hat{b}_{1,q}(t)$ is singular at, and only at, $t = \{0\}$. \end{coro}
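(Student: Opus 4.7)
The plan is to deduce Corollary \ref{CONORMALCOR} as a direct reading of Lemma \ref{COR} through the standard symbol/conormal-distribution dictionary on $\R$.

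First I would invoke Lemma \ref{COR}: the amplitude $b_{1,q}(\lambda)$ is a polyhomogeneous classical symbol in $S^0(\R)$ with asymptotic expansion
$$b_{1,q}(\lambda) \sim \sum_{j=0}^\infty c_j \lambda^{-j}, \qquad c_j := \int_0^{\ell_\ep} a_j^{E_\ep}(q,s)\, ds,$$
on $\lambda>0$, extended to $\lambda<0$ by complex conjugation (consistent with the cosine structure of the wave trace). Then I would apply the general principle that the inverse Fourier transform gives an isomorphism between classical symbols of order $m$ on $\R$ and conormal distributions $I^{m+1/4}(\R,\{0\})$, with the leading coefficient $c_0$ at $\lambda\to\pm\infty$ passing, up to a universal constant, to the principal symbol on $T_0^*\R$ regarded as a half-density. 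Applied with $m=0$ this yields $\hat{b}_{1,q}\in I^{1/4}(\R,\{0\})$ with principal symbol a nonzero multiple of $c_0\,|d\xi|^{1/2}$.

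For the positivity and singular-support assertions I would appeal to Theorem 6.11 of \cite{MM}, quoted in the excerpt, which states that the principal amplitude $a_0^{E_\ep}(q,s)$ of the Marvizi--Melrose parametrix is a strictly positive function on $\d E_\ep$. Integration preserves positivity, so $c_0>0$; hence the principal symbol of $\hat{b}_{1,q}$ is nonvanishing on $T_0^*\R\setminus 0$, which forces $0\in\text{SingSupp}\,\hat{b}_{1,q}$. Smoothness on $\R\setminus\{0\}$ is automatic from the symbol property: for any $\varphi\in C_c^\infty(\R\setminus\{0\})$ one has $\langle \hat{b}_{1,q},\varphi\rangle = \int \hat{\varphi}(-\lambda)\, b_{1,q}(\lambda)\, d\lambda$, and repeated integration by parts in $\lambda$ (using that $\hat{\varphi}$ is Schwartz and $b_{1,q}\in S^0$) produces arbitrary polynomial decay.

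The only real bookkeeping is keeping normalization constants consistent (the $2\pi$ in Fourier inversion, and the convention of principal symbols of conormal distributions as half-densities on $N^*\{0\}$); these conventions do not affect the qualitative nonvanishing conclusion, which is what downstream applications require. I do not anticipate any substantive obstacle: once Lemma \ref{COR} is in hand, Corollary \ref{CONORMALCOR} is essentially a formal consequence of the symbol-to-conormal correspondence together with the Marvizi--Melrose positivity of $a_0^{E_\ep}$.
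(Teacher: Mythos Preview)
Your proposal is correct and follows essentially the same route as the paper: invoke Lemma \ref{COR} to place $b_{1,q}$ in $S^0$, then apply the standard characterization of $I^{1/4}(\R,\{0\})$ as Fourier transforms of order-zero symbols, and read off positivity of the principal symbol from \cite[Proposition 6.11]{MM}. The paper's proof is the same two-line argument, only without your additional remarks on smoothness away from $0$ and normalization conventions.
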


Let us recall the definitions: 
In the notation of \cite[Section 18.2]{HoIII} (see pages 100-101), 
$u \in I^{1/4}(\R, \{0\})$ is a conormal distribution conormal to $\{0\}$ if
$(x D_x)^k u $ belongs to the same Sobolev space as $u$  (see \cite[Definition 18.2.6]{HoIII}). By
\cite[Theorem 18.2.8]{HoIII}, $u \in I^{1/4}(\R, \{0\})$ if and only if $$u(x) = \int_{\R} e^{i\tau x} a(\tau) d \tau $$
where $a \in S^0$. Such conormal distributions are sums of homogeneous distributions
 of the form $x_+^s, (x \pm i 0)^s$ which are singular only at $x =0$. Moreover, a co-normal distribution to $\{0\}$  has a `symbol', namely a half density $a_0(\xi) |d \xi |^{\half}$ on $T^*_0 \R$, where $a_0(\xi)$ is the leading order term of
 the symbol expansion of $a(\xi)$. With these definitions in hand, we give the simple proof of Corollary \ref{CONORMALCOR}.
 \begin{proof}  Corollary \ref{CONORMALCOR} follows from Lemma \ref{COR} and the definition of $I^{1/4}(\R, \{0\}). $
The fact that $a_0 > 0$ follows from \cite[Proposition 6.11]{MM}.  \end{proof}
From  Lemma  \ref{COR} and Corollary \ref{CONORMALCOR}, we  deduce the fundamental fact allowing us to give a second
proof of the main Theorem. 
\begin{lemm} \label{ONE} $L_q(s)$ has exactly one critical value. Hence,
 $L_q(s) \equiv L_q^{E_{\ep}}$.  \end{lemm}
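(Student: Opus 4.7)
The plan is to argue by contradiction, combining the conormal constraint from Corollary \ref{CONORMALCOR} with the lower-bound singularity analysis that already appeared in the proof of Theorem \ref{length spec = singsupp}. If $L_q$ possessed a critical value $c \neq T_q(\ep)$, I will show that the oscillatory integral representation of $\hat b_{1,q}(t)$ forces a singularity at the point $t_0 := c - T_q(\ep) \neq 0$, contradicting the fact that $\hat b_{1,q}$ is singular only at $t=0$. Once only one critical value is permitted, compactness of $\partial\Omega$ and smoothness of $L_q$ imply that $T_q(\Omega)=\max L_q$ and $t_q(\Omega)=\min L_q$ are both equal to this single critical value $T_q(\ep)=L_q^{E_\ep}$, hence $L_q\equiv L_q^{E_\ep}$.

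To execute this, I would start from the representation \eqref{FSIGMA} of $\hat b_{1,q}$ as an oscillatory integral with phase $\lambda(L_q(s)-T_q(\ep)-t)$ and amplitude $a(q,s,\lambda)\in S^0$ with strictly positive principal symbol $a_0(q,s)$. Suppose for contradiction that $c$ is a critical value of $L_q$ with $c\neq T_q(\ep)$. Pick a nonnegative cutoff $\rho(t)\in C_c^\infty(\R)$ supported in a small interval around $t_0$ that excludes $0$ and whose endpoints avoid the (measure-zero, by Sard) set of critical values of $L_q(s)-T_q(\ep)$. By Corollary \ref{CONORMALCOR}, $\rho \cdot \hat b_{1,q}$ is smooth and compactly supported, so its inverse Fourier transform is Schwartz; concretely, after interchanging the $\lambda$ and $s$ integrations, this inverse Fourier transform equals
$$J(\lambda)\;=\;\int_{\partial\Omega} e^{i\lambda(L_q(s)-T_q(\ep))}\,\rho\bigl(L_q(s)-T_q(\ep)\bigr)\,a(q,s,\lambda)\,ds\;+\;\cO(\lambda^{-\infty}),$$
so it suffices to exhibit non-rapid decay of $J(\lambda)$.

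The remainder is the Soga/stationary-phase analysis that already appeared in the proof of Theorem \ref{length spec = singsupp}. Contributions to $J(\lambda)$ from points where $L_q'(s)\neq 0$ are $\cO(\lambda^{-\infty})$ by repeated integration by parts. At the critical points of $L_q$ with critical value $c$ (the only ones surviving in $\supp\rho$), two cases arise: if any such critical point is degenerate, Theorem \ref{Soga} applied to the $s$-integral (using that $a_0(q,s)>0$ at that point) rules out $J(\lambda)=\cO(\lambda^{-\infty})$; if every such critical point is nondegenerate, then there are only finitely many of them (shrinking $\supp\rho$ around $c$ if necessary), and the stationary-phase formula yields an expansion of the form
$$J(\lambda)\;=\;\sqrt{\tfrac{2\pi}{\lambda}}\,e^{i\lambda c}\sum_{j} e^{i\frac{\pi}{4}\mathrm{sgn}\,L_q''(s_j)}\,\frac{a_0(q,s_j)}{\sqrt{|L_q''(s_j)|}}\;+\;\cO(\lambda^{-3/2}).$$
Since $a_0>0$ and nonnegative real combinations of $e^{i\pi/4}$ and $e^{-i\pi/4}$ vanish only when every coefficient is zero, the leading coefficient is nonzero, again precluding rapid decay. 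Either way we reach a contradiction, so $L_q$ has exactly one critical value.

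Having only one critical value forces $L_q$ to be constant: since $\partial\Omega$ is compact without boundary and $L_q$ is smooth, its global maximum $T_q(\Omega)$ and global minimum $t_q(\Omega)$ are critical values, hence both equal $T_q(\ep)$, and this sandwich yields $L_q\equiv T_q(\ep)=L_q^{E_\ep}$. The conceptual core of the argument — and the main thing to get right — is the non-cancellation step, which is exactly the obstruction addressed by Soga's theorem together with positivity of $a_0$; the only additional care needed is the bookkeeping that lets us localize $\hat b_{1,q}$ near $t_0$ and pull the localization back onto the $s$-integral, which is why I would insist on choosing $\rho$ so that its endpoints avoid critical values of $L_q$.
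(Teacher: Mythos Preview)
Your approach is essentially the paper's: localize $\hat b_{1,q}$ away from $t=0$ by a cutoff, use Corollary \ref{CONORMALCOR} to conclude the localized piece is smooth (hence has rapidly decaying inverse Fourier transform), and contradict this via Soga's theorem applied to the resulting oscillatory integral in $s$. The subsequent degenerate/nondegenerate case split mirrors the proof of Theorem \ref{length spec = singsupp}.

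There is one technical slip to correct. You assert that, after interchanging integrations, the inverse Fourier transform of $\rho\,\hat b_{1,q}$ equals
\[
\int_{\partial\Omega} e^{i\lambda(L_q(s)-T_q(\ep))}\,\rho\bigl(L_q(s)-T_q(\ep)\bigr)\,a(q,s,\lambda)\,ds \;+\;\cO(\lambda^{-\infty}).
\]
This is not true: the convolution $(\check\rho * b_{1,q})(\lambda)$ has amplitude
\[
A_q(s,\lambda)\;=\;\int_{\R}\check\rho(\mu)\,e^{-i\mu(L_q(s)-T_q(\ep))}\,a(q,s,\lambda-\mu)\,d\mu,
\]
and Taylor expanding $a(q,s,\lambda-\mu)$ in $\mu$ shows that $A_q(s,\lambda)-\rho(L_q(s)-T_q(\ep))\,a(q,s,\lambda)$ is a symbol of order $-1$, not $\cO(\lambda^{-\infty})$. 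The paper deals with exactly this point by proving $A_q\in S^0(T^*S^1)$ with leading term $\rho(L_q(s)-T_q(\ep))\,a_0(q,s)$, and then applying the symbol-amplitude version of Soga's theorem \cite[Theorem 2]{So86}. With that correction in place, your argument goes through unchanged: only the (nonnegative, strictly positive at the critical point) leading amplitude enters either the stationary-phase computation in the nondegenerate case or Soga's non-decay criterion in the degenerate case.
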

 
 \begin{proof} Suppose that $L_q$ is non-constant. Then it has distinct maxima and minima, and at least one of these must differ from
 $T_q$. We denote the corresponding critical value by  $t_1 \not= 0.$ With no loss of generality, we assume that $t_1$ is the minimum value.
 Let $\psi(t) \in C_0^{\infty}(\R)$ be a bump function equal to $1$ in a neighborhood of $t_1$ and equal to zero in a neighborhood of $0$. 
 Then $\hat{b}_{1,q}(t) = (1 -\psi(t)) \hat{b}_{1,q}(t)  +  \psi(t) \hat{b}_{1,q}(t) $. By the Corollary \ref{CONORMALCOR} we have $ \psi(t) \hat{b}_{1,q}(t) \in C_0^{\infty}(\R)$. We will see that this would lead to a contradiction. 
 We first take inverse Fourier transform and obtain 
 \begin{align*}  \fcal^*_{t \to \tau} \left ( \psi(t) \hat{b}_{1,q}(t) \right ) = & (\check{\psi} * b_{1, q}) (\tau) \\
 = & \int_{\R} \check{\psi}(\lambda) b_{1, q}(\tau - \lambda) d \lambda\\
=  &    \int_0^{\ell_\ep}   e^{ i \tau (L_q(s)- T_q(\ep))} \left( \int_{\R} \check{\psi}(\lambda)   e^{  - i \lambda (L_q(s)- T_q(\ep))}
a(q, s, \tau - \lambda)  d \lambda \right) ds \end{align*}
 is rapidly decaying in $\tau$.

We now claim that $ A_q(s, \tau): =  \int_{\R} \check{\psi}(\lambda)   e^{  - i \lambda (L_q(s)- T_q(\ep))}
a(q, s, \tau - \lambda)  d \lambda $ is an element of $S^0(T^*S^1). $ Indeed, using the symbol expansion \eqref{SYMBOL} 
one has, as $\tau \to \infty$, 
$$A_q(s, \tau)  \sim  \sum_{j = 0}^{\infty}  \tilde a_j(q, s)  \int_{\R} \check{\psi}(\lambda)   e^{- i \lambda (L_q(s)- T_q(\ep))} \langle \tau - \lambda \rangle^{-j} 
 d \lambda, $$
 with $\langle x \rangle := (1 + x^2)^{\half}$, and for new amplitudes $\tilde a_j$ that depend linearly on $\{a_k\}_{ k\leq j}$. Then we write this as
$$ A_q(s, \tau)  \sim \sum_{j = 0}^{\infty}  \tilde a_j(q, s) \tau^{-j}  \int_{\R} \check{\psi}(\lambda)   e^{- i \lambda (L_q(s)- T_q(\ep))} \sigma^{-j} (\tau, \lambda) d \lambda, $$ where
 $ \sigma(\lambda, \tau)  : = (|\tau|^{-2} + |\frac{\lambda}{\tau} - 1|^2)^{1/2}. $ Note that $\langle \tau - \lambda \rangle^{-1} =
  |\tau|^{-1} \sigma(\lambda, \tau)^{-1}  $ and that $\sigma(\lambda, \tau)^{-1} \to 1$ 
 uniformly on compact sets in $\lambda$ as $\tau \to \infty$, and indeed has an asymptotic expansion in $\tau$. Since $\check{\psi}
 \in \scal(\R)$, the asymptotic expansion is integrable in $d\lambda$ and some arrangement gives a symbol expansion for $A_q(s, \tau)$.
 In conclusion, 
 $$\int_0^{\ell_\ep}   e^{ i \tau (L_q(s)- T_q(\ep))}  A_q(s, \tau) ds = O(\tau^{- N}), \;\; \forall N > 0. $$
 However, the phase has a critical  point, so that the conclusion contradicts \cite[Theorem 2]{So86}. 
 This concludes the second proof of Theorem  \ref{MAINTHEO2}. For the sake of completeness, we state \cite[Theorem 2]{So86}
 in the relevant dimension one. 
 \begin{theo} Let $I(\tau) : = \int_{\R} e^{i \tau \phi(x)} \rho(x, \tau) dx, $ where $\rho(x, \tau) \sim \rho_0(x) + \rho_1(x) (i \tau)^{-1}
 	+ \cdots$ as $\tau \to \infty$. Assume that  $\rho_0 \geq 0$ with $\rho_0(x) > 0$ on the minimum set of $\phi(x)$. 
 	Then for some $m \in \R$ depending only on the dimension, $\tau^m I(\tau) \notin L^2(\R_+)$. 
 	
 \end{theo}

 \end{proof}

In the final section we provide a general result which makes our article independent of \cite{So}. 

\subsection{A refined lemma} 
In this part we present the following more general theorem which is of independent interest. It shows that many assumptions that we previously used can be relaxed. 
\begin{lemm} \label{Inverse Oscillatory Integral Problem}
	Let $\phi (s)$ and $a(s, \lambda)$ be two smooth functions on $S^1 = \R /\Z$ with $a(s, \lambda)$ satisfying:
	$$ a(s, \lambda) = a_0(s) + \cO(\lambda^{-1/2 -\epsilon}), \qquad a_0(s)\; \text{smooth and positive},$$
	for some $\epsilon >0$. Assume for $| \lambda | \geq \lambda_0>0$, we have
	$$I(\lambda)= \int_{S^1} e^{-i\lambda \phi(s)} a(s, \lambda) ds = c_0 + \cO(\lambda^{-\frac12 - \epsilon}), $$
	for a constant $c_0$. 
	Then $\phi(s) \equiv 0$ on $S^1$. 
\end{lemm}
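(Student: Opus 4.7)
The plan is a proof by contradiction via stationary phase. Writing $a = a_0 + r$ with $\|r(\cdot,\lambda)\|_\infty = \cO(\lambda^{-1/2-\epsilon})$, the $r$-contribution to $I(\lambda)$ is at most $\cO(\lambda^{-1/2-\epsilon})$, so it suffices to work with $I_0(\lambda) := \int_{S^1} e^{-i\lambda\phi(s)}a_0(s)\,ds = c_0 + \cO(\lambda^{-1/2-\epsilon})$. Assume for contradiction that $\phi \not\equiv 0$. The case $\phi \equiv c \neq 0$ is ruled out immediately: then $I_0(\lambda) = e^{-i\lambda c}\int_{S^1} a_0\, ds$ has constant non-zero modulus but rotating phase, so fails to converge to $c_0$. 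Thus $\phi$ is non-constant and attains its maximum $M$ and minimum $m$ (with $m < M$) at critical points.

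Localize $I_0$ via a smooth partition of unity subordinate to a cover of $S^1$ by small neighborhoods of critical points plus arcs on which $\phi' \neq 0$. Repeated integration by parts on each such arc yields $\cO(\lambda^{-N})$ for every $N$; hence only critical points contribute. Taking $\phi$ Morse for the main case, the classical stationary phase formula gives
\begin{equation*}
I_0(\lambda) = \frac{\sqrt{2\pi}}{\sqrt{\lambda}} \sum_{s_0:\,\phi'(s_0)=0} B_{s_0}\, e^{-i\lambda\phi(s_0)} + \cO(\lambda^{-3/2}),\qquad B_{s_0} := \frac{a_0(s_0)\, e^{i(\pi/4)\operatorname{sgn}\phi''(s_0)}}{\sqrt{|\phi''(s_0)|}}.
\end{equation*}
Equating with $c_0 + \cO(\lambda^{-1/2-\epsilon})$ and multiplying by $\sqrt{\lambda}$ produces
\begin{equation*}
\sqrt{2\pi}\sum_{s_0} B_{s_0}\, e^{-i\lambda\phi(s_0)} = c_0\sqrt{\lambda} + \cO(\lambda^{-\epsilon}).
\end{equation*}
The left side is a bounded almost periodic function, which forces $c_0 = 0$ and reduces the identity to $S(\lambda) := \sum_{s_0} B_{s_0}\, e^{-i\lambda\phi(s_0)} = \cO(\lambda^{-\epsilon}) \to 0$ as $|\lambda|\to\infty$.

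Grouping terms by critical value, $S(\lambda) = \sum_c A_c\, e^{-i\lambda c}$ with $A_c := \sum_{\phi(s_0)=c} B_{s_0}$. By Bohr's mean value theorem applied to the bounded almost periodic function $S$ which tends to $0$, each coefficient satisfies $A_c = \lim_{T\to\infty}(2T)^{-1}\int_{-T}^{T} S(\lambda)\,e^{i\lambda c}\,d\lambda = 0$. Separating $A_c$ into the contributions from minima ($\phi''>0$, Maslov phase $e^{+i\pi/4}$) and maxima ($\phi''<0$, phase $e^{-i\pi/4}$), the vanishing of both the real and imaginary parts of $A_c$ forces the two positive sums $\sum_{\min,\,\phi=c} a_0(s_0)/\sqrt{|\phi''(s_0)|}$ and $\sum_{\max,\,\phi=c} a_0(s_0)/\sqrt{|\phi''(s_0)|}$ to vanish separately—contradicting $a_0 > 0$.

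The main obstacle is handling non-Morse $\phi$. An isolated critical point of finite order $k\geq 3$ contributes $C_k\,a_0(s_0)\,e^{-i\lambda\phi(s_0)}\,\lambda^{-1/k}$ with $1/k < 1/2$, which decays strictly more slowly than $\lambda^{-1/2-\epsilon}$; the same Bohr mean-value argument again kills the coefficient and forces $a_0(s_0) = 0$, a contradiction. Flat (infinite-order) critical points and non-isolated critical sets are more delicate but can be treated by a Malgrange-type preparation theorem reducing $\phi$ to a finite-order normal form locally, or by approximating $\phi$ in $C^{\infty}(S^1)$ by Morse functions (dense on the compact manifold) and passing to the limit. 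Ruling out every non-trivial $\phi$ yields $\phi \equiv 0$.
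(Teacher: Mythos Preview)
Your argument for the Morse case is correct, and the extension to isolated critical points of finite order $k\geq 3$ is plausible (the leading contribution at level $c=M=\max\phi$ already gives a nonzero coefficient, so even the separation into $e^{\pm i\pi/4}$ is not strictly needed). However, the final paragraph is a genuine gap. Neither of your two proposed fixes works: a Malgrange preparation theorem cannot reduce a \emph{flat} critical point to a finite-order normal form, essentially by definition; and approximating $\phi$ by Morse functions $\phi_n$ is useless because the hypothesis $\int e^{-i\lambda\phi_n}a_0=c_0+\cO(\lambda^{-1/2-\epsilon})$ is not inherited by the approximants, so the Morse case cannot be invoked for $\phi_n$. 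For a smooth $\phi$ with a flat critical point (or a Cantor-like critical set), no stationary-phase expansion is available and your scheme simply does not start.

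The paper's proof takes a completely different route that sidesteps this difficulty. Instead of analyzing $I_0(\lambda)$ in the frequency variable, it passes to the pushforward distribution $g_0(t)=\psi(t)\int_{\phi(s)\leq t}a_0(s)\,ds$ and observes that $J(\lambda)=\int e^{-i\lambda\phi}a_0=i\lambda\,\widehat{g_0}(\lambda)$ modulo a smooth correction. The decay hypothesis then gives $\widehat{g_0}(\lambda)=-ic_0/\lambda+\cO(\lambda^{-3/2-\epsilon})$, so $g_0(t)=c_0H(t)+f(t)$ with $f\in C^{0,\alpha}$ for every $\alpha<\tfrac12+\epsilon$. The key point is that at \emph{any} critical point $s_0$ of $\phi$ one has, by Taylor with $\phi'(s_0)=0$, the inequality $|\phi(s)-\phi(s_0)|\leq C|s-s_0|^2$; this alone forces $|g_0(t_0+h)-g_0(t_0-h)|\geq (\min a_0)\sqrt{h/C}$ at every critical value $t_0$, so $g_0$ cannot be $C^{0,\alpha}$ at $t_0$ for any $\alpha>\tfrac12$. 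Comparing with the decomposition $g_0=c_0H+f$, the only possible critical value is $t_0=0$, hence $\max\phi=\min\phi=0$. This argument is uniform in the degeneracy of the critical point---Morse, finite order, flat, or non-isolated---precisely because it never invokes a stationary-phase expansion. That is what your approach is missing.
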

\begin{proof}
We follow the proof of \cite{So} closely. First let us discard the remainder term in $a(s, \lambda)$ and call the resulting integral $J(\lambda)$, i.e.
$$J(\lambda) = \int_{S^1} e^{-i\lambda \phi(s)} a_0(s) ds .$$
Obviously we have $J(\lambda) =c_0 +\cO(\lambda^{-1/2-\epsilon})$. 
Now, let $\psi(t)$ be a cutoff function in $\R$ that equals one on an open set containing the range of $\phi$. Let $H(t)$ be the Heaviside function at zero and define:
$$ g_0(t) = \psi(t) \int_\R H(t - \phi(s)) a_0(s) ds = \psi(t) \int_{\phi(s) \leq t} a_0(s) ds,$$
$$ g_1(t) = \psi'(t) \int_\R H(t - \phi(s)) a_0(s) ds = \psi'(t) \int_{\phi(s) \leq t} a_0(s) ds.$$
The function $g_0$ is in $H^{m}$ for every $m < 1/2$, and because it is a compactly supported distribution its Fourier transform $\widehat{g_0}(\lambda)$ is an analytic function. Also since $\int_\R H(t - \phi(s)) a(s) ds$ is smooth at the regular values $t$ of $\phi(s)$, and since $\psi'(t)$ vanishes on the range of $\phi(s)$, $g_1(t)$ is smooth, hence $\widehat{g_1}(\lambda)$ is rapidly decaying. 
We note that by a simple integration by parts, $I(\lambda)$ can be written as
\begin{align*}
 J(\lambda) = & i \lambda \int_\R e^{-i\lambda t} g_0(t) dt - \int_\R e^{-i\lambda t} g_1(t) dt \\
                   = & i \lambda \widehat{g_0}(\lambda) - \widehat{g_1}(\lambda).
 \end{align*}
 Thus by our assumption on $I(\lambda)$ (so the same for $J(\lambda)$), we obtain
 $$ \widehat{g_0}(\lambda) = -\frac{i J(\lambda)}{\lambda} + \cO(\lambda^{-\infty}) = -\frac{ic_0}{\lambda} + \cO(\lambda^{-3/2-\epsilon}), \qquad |\lambda| \geq \lambda_0. $$
 By taking inverse Fourier transform and using the Sobolev Embedding Theorem,
 we get 
 \begin{equation} \label{g_0 decomposition} g_0(t) = c_0H(t) + f(t), \end{equation}
 where $H(t)$ is the Heaviside function at $t=0$  (conormal distribution) with a jump discontinuity at $t=0$, and $f(t)$ is continuous at every $t$ and in fact belongs to the H\"older class $C^{0, \alpha}$ for every $0<\alpha <1/2+\epsilon$.   However, we will show that every critical value $t_0$ of $\phi(s)$ is a `big singularity' of $g_0(t)$. By this we mean that $g_0(t)$ is not H\"older continuous $C^{0, \alpha}$ at $t_0$ for any $\alpha > 1 /2 $. This together with the decomposition \eqref{g_0 decomposition} would imply that the only critical value of $\phi(s)$ is $0$. Since $\phi$ is a function on $S^1$, it must be zero everywhere. 
 
 So assume $t_0 = \phi(s_0)$ is a critical value of $\phi$ and $s_0$ is a critical point in its inverse image. We denote for each $h>0$, 
 $$ A_{h} = \{ s \in S^1; \; -h < \phi(s) -t_0 \leq h \}.$$
 We recall that $\psi(t) =1$ in an open set containing the image of $\phi$,  so we can choose $h$ small enough so that $\psi(t_0-h) =\psi(t_0+h) =1$. By the definition of $g_0$, we have
 $$ | g_0(t_0+h) - g_0(t_0-h) | =\int_{ A_h} a_0(s) ds.$$
 However, since $s_0$ is a critical point of $\phi$, we have $| \phi(s) - t_0| \leq c|s-s_0|^2$ for some $c>0$, and thus we have the inclusion 
 $$ \{ s \in S^1; \; c|s-s_0|^2 < h \} \subset A_h.$$ 
 We then write, 
 $$ |g_0(t_0+h) - g_0(t_0-h)|  \geq \int_{ c|s-s_0|^2 < h} a_0(s) ds \geq \sqrt{h/c} \,  \min{a_0(s)} , $$
 which by letting $h \to 0$ implies that $g_0$ is not H\"older continuous $C^{0, \alpha}$ at $t_0$ for $\alpha > 1/2$.

\end{proof}

  \subsection*{Acknowledgement} The authors are grateful to the anonymous referee for their helpful comments and suggestions. The research of H.H. is supported by the Simons Collaborations Grants for Mathematicians 638398. The research of S.Z. is partially supported by NSF grant DMS-1810747. .

\end{document}